\newtheorem{thm}{Theorem}[section]
\newtheorem{lem}[thm]{\bfseries Lemma}        %
\newtheorem{remark}[thm]{\bfseries Remark}    %
\newtheorem{prop}[thm]{\bfseries Proposition} %
\newtheorem{cor}[thm]{\bfseries Corollary}     
\newtheorem{defn}[thm]{\bfseries Definition}
\newtheorem{conj}[thm]{\bfseries Conjecture}
\newenvironment{proof}{\medskip                    %
\noindent{\scshape Proof:}}{\quad $\Box$\medskip}  %
\def\rank{\mathrm{rank}}
\def\id{\mathrm{id}}
\def\Fix{\mathrm{Fix}}
\begin{document}

\title{On symmetry groups of oriented matroids 
}
\author{Hiroyuki Miyata\\
 Department of Computer Science, Gunma University \\
 Kiryu, Gunma, 376-8515, Japan \\
 {\tt hmiyata@cs.gunma-u.ac.jp}}

\maketitle

\begin{abstract}
Symmetries of geometric structures such as hyperplane arrangements, point configurations and polytopes
have been studied extensively for a long time.
However, symmetries of oriented matroids, a common combinatorial abstraction of them, are not understood well.

In this paper, we aim to obtain a better understanding of symmetries of oriented matroids.
First, we put focus on symmetries of matroids, and give a general construction that generates a $3$-dimensional point configuration with 
a matroidal symmetry that cannot be realized as a geometric symmetry.
The construction is based on the observation that every non-trivial rotation in the $2$-dimensional Euclidean space has a unique
fixed point but that there is no corresponding property for matroids.
The construction suggests that the lack of the fixed point theorem generates a big gap between matroidal symmetries and geometric symmetries 
of point configurations.
Motivated by this insight, we study fixed-point properties for symmetry groups of oriented matroids.

For rotational symmetries of oriented matroids, we prove a useful property, which corresponds to the uniqueness of fixed points (in the rank $3$ case).
Using it, we classify rotational and full symmetry groups of simple oriented matroids of rank $3$.
In addition, we define fixed-point-admitting (FPA) property for subgroups of symmetry groups of oriented matroids, and
make classification of rotational symmetry groups with FPA property of simple acyclic oriented matroids of rank $4$.
We conjecture that the symmetry group of every acyclic simple oriented matroid has FPA property.
\end{abstract}

\section{Introduction}
Symmetries of geometric structures such as polytopes, hyperplane arrangements and point configurations 
have been paid big interests for a long time. They have been studied extensively and rich theories concerning their symmetries have been developed (see \cite{H90,S94}, for example).
Those geometric structures play important roles in computer science (e.g. combinatorial optimization, computational geometry, etc.), and 
combinatorial aspects (e.g. number of faces, underlying graphs, etc.)
are especially important in this context.
Some combinatorial aspects of those geometric structures have been abstracted into simple axiom systems, which have led to rich theories such as matroid theory~\cite{O11} and oriented matroid theory~\cite{BLSWZ99}.
These combinatorial structures capture some combinatorial behaviors of those geometric structures very precisely.
For example, in the framework of oriented matroids, the upper bound theorem of polytopes can be proved~\cite{M82}, and  many aspects of  linear programming theory
can also be discussed (see \cite[Chapter 10]{BLSWZ99}).
 
One of natural questions concerning matroids and oriented matroids would be whether their symmetries
admit similar properties to symmetries of concrete geometric structures they abstract.
However, there is not so much work on their symmetries, and are not understood well.
The aim of this paper is to contribute to a better understanding of symmetries of matroids and oriented matroids.

There are some related work, which studies gaps between {\it geometric symmetries} and {\it combinatorial symmetries}.
Geometric symmetries of point configurations are symmetries induced by affine automorphisms.
By combinatorial symmetries, we mean symmetries of underlying combinatorial structures such as the face lattices of polytopes, and
the associated matroids or oriented matroids of the configurations.
Every geometric symmetry induces a combinatorial symmetry, but the converse is not always true.
There are considerable studies on this topic since it is a fundamental question when a combinatorial symmetry cannot be realized geometrically, 
in order to understand symmetries of underlying combinatorial structures.

Here, we review studies on gaps between combinatorial symmetries and geometric symmetries of related structures.
In 1971, Mani~\cite{M71} proved that every combinatorial symmetry (face-lattice symmetry) can be realized geometrically for $3$-polytopes.
Then Perles showed that the same holds for $d$-polytopes with $d+3$ vertices~\cite[p.120]{G03}.
After that, it had been a big question whether every combinatorial symmetry (face-lattice symmetry) can be realized geometrically for every polytope.
Bokowski, Ewald and Kleinschmidt~\cite{BEK84} resolved the question by presenting a $4$-polytope with $10$ vertices
with a non-realizable combinatorial symmetry (face-lattice symmetry).
For point configurations, Shor~\cite{S91} constructed an example of a $2$-dimensional configuration of $64$ points with
a non-realizable combinatorial symmetry (oriented-matroid symmetry) and
Richter-Gebert~\cite{R96_2} presented an example of a $2$-dimensional configuration of $14$ points with a non-realizable combinatorial symmetry
(oriented-matroid symmetry).
In 2006, Paffenholz~\cite{P06} constructed a two-parameter infinite family of $4$-polytopes with non-realizable combinatorial symmetries (face-lattice symmetries).
\subsection*{Contribution of the paper}
First, we study a gap between combinatorial symmetries (matroidal symmetries) and geometric symmetries of point configurations.
We consider a general method to construct $3$-dimensioinal point configurations with non-realizable matroidal symmetries (Theorem \ref{thm:general_construction}).
Similarly to the constructions of polytopes with non-realizable combinatorial symmetries
by Bokowski, Ewald and Kleinschmidt~\cite{BEK84} and Paffenholz~\cite{P06},
a key tool for our construction is the fixed point theorem, which asserts that
every non-trivial rotation in the $2$-dimensional Euclidean space has a unique fixed point.
There is no corresponding property for matroids and it makes the gaps between geometric symmetries and matroidal symmetries.
Motivated by this insight, we propose to study fixed-point properties of symmetry groups of oriented matroids.
We prove a useful property for rotational symmetries of oriented matroids, which corresponds to the uniqueness of fixed points in the rank $3$ case (Theorems \ref{thm:uniqueness_general}). 
Based on it, we prove that rotational symmetry groups of simple oriented matroids of rank $3$ are classified into
the cyclic groups $\mathbb{Z}_n$ ($n \geq 1$) of order $n$, the dihedral groups $D_{2n}$ ($n \geq 1$) of order $2n$\footnote{In the literature, the dihedral group of order $2n$ is also written as $D_n$, but we write $D_{2n}$ in this paper.}, 
the alternating group $A_4$ (Theorem \ref{thm:classification_rotation_rank3}), 
and that full symmetry groups are classified into $\mathbb{Z}_n$ ($n \geq 1$),  $D_{2n}$ ($n \geq 1$) and the symmetric group $S_4$ (Theorem \ref{thm:classification_full_rank3}). 
Furthermore, we define {\it fixed-point-admitting (FPA) property} for subgroups of symmetry groups of oriented matroids and
study FPA rotational symmetry groups of simple acyclic oriented matroids of rank $4$.
In particular, it is proved that FPA rotational symmetry groups of acyclic simple oriented matroids of rank $4$
are classified into $\mathbb{Z}_n$ ($n \geq 1$), $D_{2n}$ ($n \geq 1$), $A_4$, $A_5$ and $S_4$ (Theorem \ref{main_thm}). 
This result completely coincides with the classification of (geometric) rotational symmetry groups of $3$-dimensional point configurations.
\\
\\
{\bf Organization of the paper}

In Section 2, we explain some terminologies on matroids and oriented matroids.
Section 3 is devoted to studying symmetries of matroids.
We give a general construction of $3$-dimensional point configuratoins with non-realizable matroidal symmetries.
Motivated by insights in Section 3, we, in Section 4, study fixed-point properties of symmetry groups of oriented matroids.
In Sections 5 and 6, symmetry groups of simple oriented matroids of rank $2$ and those of rank $3$ are investigated respectively.
Based on results in the previous sections, we classify FPA rotational symmetry groups of acyclic simple oriented matroids of rank $4$ in Section 7.
Finally, we make a conclusion of the paper in Section 8.

\section{Preliminaries}
In this paper, we assume that the reader is familiar with matroids and oriented matroids.
Here, we recall basics, which will be used in this paper.
For details on matroids and oriented matroids, see~\cite{BLSWZ99,O11}.
Throughout the paper, we use the notation $[n]$ to denote the set $\{ 1,2,\dots,n\}$ for $n \in \mathbb{N}$.
\subsection{Definitions on matroids}
We start with some definitions on matroids.
There are many structures by which we can specify a matroid, such as independent sets, bases, a rank function and flats. 
\begin{defn}(Independent sets)\\
For a finite set $E$ and a collection ${\cal I} \subseteq 2^E$ satisfying the following axioms,
the pair $(E,{\cal I})$ is called a {\it matroid} on the {\it ground set} $E$ with the {\it independent sets} ${\cal I}$.
\begin{itemize}
\item[(I1)] $\emptyset \in {\cal I}$.
\item[(I2)] If $A \in {\cal I}$, then $B \in {\cal I}$ for any $B \subseteq A$.
\item[(I3)] If $A,B \in {\cal I}$ and $|A| > |B|$, then there exists $a \in A$ such that $B \cup \{ a \} \in {\cal I}$.
\end{itemize}
\end{defn}
For a matroid $M=(E,{\cal I})$, let $\rank_{M}(\cdot ):2^E \rightarrow \mathbb{Z}$ be the map such that
\[ \rank_{M}(F):= \max\{|F'| \mid F' \subseteq F, F' \in {\cal I} \}. \]
The map $\rank_{M}$ is called the {\it rank function} of $M$. The {\it rank} of $M$ is defiend as $\rank_{M}(E)$ and is denoted by $\rank(M)$.
Let ${\cal B}:= \max\{ B \mid B \in {\cal I} \}$ (with respect to inclusion).
An element of ${\cal B}$ is called a {\it basis} of $M$. It is known that $|B|=\rank(M)$ for any $B \in {\cal B}$.
The set of bases is actually enough to specify a matroid (see \cite{O11}).
The function $\delta_M:E^r \rightarrow \{ 1, 0 \}$, where $r:= \rank(M)$, defined by
\[ \delta_M (b) = 
\begin{cases}
1 & \text{if $b$ is a basis of $M$,}\\
0 & \text{otherwise}
\end{cases}
\]
is called the {\it characteristic function} of $M$. In the following, we specify matroids by the pairs of their ground sets and characteristic functions.

Matroids arise naturally from vector configurations and point configurations.
Let $E$ be a finite set and $V=({\bm v_e})_{e \in E} \in \mathbb{R}^{d \times |E|}$ a $d$-dimensional vector configuration.
The {\it associated matroid} of $V$ is defined as $M_V=(E,\delta_V)$, where 
\[ \delta_V(f_1,\dots,f_d):= 
\begin{cases}
1 & \text{if $\det({\bm v_{f_1}},\dots,{\bm v_{f_d}}) \neq 0$,}\\
0 & \text{otherwise} 
\end{cases}
\]
for $f_1,\dots,f_d \in E$.
If a matroid can be represented as the associated matroid of some vector configuration, it is said to be {\it realizable}.

Let $P=({\bm p_e})_{e \in E} \in \mathbb{R}^{d \times |E|}$ be a $d$-dimensional point configuration.
Then, the {\it associated vector configuration} $V_P:=({\bm v_e})_{e \in E} \in \mathbb{R}^{(d+1) \times |E|}$ is such that
\[ {\bm v_e}:= \begin{pmatrix} {\bm p_e} \\ 1\end{pmatrix}\]
for each $e \in E$.
The {\it associated matroid} $M_P$ of $P$ is defined as $M_{V_P}$.

To understand combinatorial structures of point configurations, affine subspaces spanned by some of the points play fundamental roles.
They are abstracted by the notion of flats.
\begin{defn}(Flats of matroids) \\
Let $M$ be a matroid on a ground set $E$.
A subset $F \subseteq E$ is called a {\it flat} of $M$ 
if $\rank_{M}(F) < \rank_{M}(F \cup \{ e \})$ for all $e \in E \setminus F$.
For $S \subseteq E$, we denote by ${\rm span}_{M}(S)$ the minimal flat of $M$ that contains $S$.
\end{defn}
For the associated matroid $M_P$ and $S \subseteq E$, ${\rm span}_{M_P}(S)$ is the set of points of $P$ lying on the affine hull of $S$.

\subsection{Definitions on oriented matroids}
\subsubsection{Axiom systems}
Oriented matroids also have various equivalent axiom systems. Let us first see the {\it chirotope axioms}.
\begin{defn}(Chirotope axioms)\\
Let $E$ be a finite set and $r\geq 1$  an integer. A {\it chirotope of rank $r$ on $E$} is a map 
$\chi : E^{r} \rightarrow \{ +,-,0\}$ that satisfies the following properties
for any $i_{1},\dots,i_{r},j_{1},\dots,j_{r} \in E$. 
\begin{center}
\begin{itemize}
\item[($B1$)] 
$\chi$ is not identically zero.
\item[($B2$)] 
$\chi (i_{\sigma (1)},\dots,i_{\sigma (r)}) = {\rm sgn} (\sigma) \chi (i_{1},\dots,i_{r})$
for all $i_1,\dots,i_r \in E$ and any permutation $\sigma$ on $[r]$.
\item[($B3$)] 
For all $i_1,\dots,i_r,j_1,\dots,j_r \in E$, we have
\begin{align*}
\{ \chi (i_1,\dots,i_r) \cdot \chi (j_1,\dots,j_r)\} \cup \{ \chi (j_s,i_2,\dots,i_r) \cdot \chi (j_1,\dots,j_{s-1},i_1,j_{s+1},\dots,j_r) \mid s = 1,\dots,r \} \\
\supseteq \{ +,-\} \text{ or } =\{ 0 \}.
\end{align*}
\end{itemize}
\end{center}
\label{chirotope_axioms}
\end{defn}
A pair $(E,\{ \chi, -\chi \})$ is called an {\it oriented matroid} of rank $r$ on a ground set $E$.
From $\chi$, we define the map $\delta_{\chi}:E^r \rightarrow \{ 1,0\}$ such that 
\begin{align*}
\delta_{\chi} (\lambda) = 
\begin{cases}
1 & \text{if $\chi(\lambda) \neq 0$,} \\
0 & \text{otherwise.}
\end{cases} 
\end{align*}
The pair $(E,\delta_{\chi})$ is called the {\it underlying matroid} of ${\cal M}$ and is denoted by $\underline{\cal M}$.
The rank function $\rank_{\cal M}(\cdot ):2^E \rightarrow \mathbb{Z}$ of ${\cal M}$ is defined by  $\rank_{\underline{\cal M}}(\cdot )$. 
A subset $F \subseteq E$ is a {\it flat} of ${\cal M}$ if it is a flat of $\underline{\cal M}$.
For $A \subseteq E$, we denote by ${\rm span}_{\cal M}(A)$ the minimal flat of ${\cal M}$ that contains $A$. 

Oriented matroids also naturally arise from vector configurations and point configurations.
For a finite set $E$ and a $d$-dimensional vector configuration $V=({\bm v_e})_{e \in E}$,
define a map $\chi_V: E^d \rightarrow \{ 0,+,-\}$ by
\[ \chi_V(i_1,\dots,i_d):={\rm sign}(\det({\bm v_{i_1}},\dots,{\bm v_{i_d}})) \text{ for $i_1,\dots,i_d \in E$.}\]
The oriented matroid $(E,\{ \chi_V,-\chi_V\})$ is called the {\it associated oriented matroid} of $V$ and is denoted by ${\cal M}_V$.
For a $d$-dimensional point configuration $P=( {\bm p_e})_{e \in E}$,
the associated oriented matroid ${\cal M}_P$ is given by the rank $d+1$ oriented matroid ${\cal M}_{V_P}$, where $V_P$ is the associated vector configuration of $P$.
It is sometimes useful to consider oriented matroids arising from {\it signed point configurations}.
A signed point configuration is a triple $S=(P=( {\bm p_e})_{e \in E},W,B)$, where $P$ is a point configuration and $(W, B)$ is a partition of $E$.
A point indexed by an element of $W$ (resp.\ $B$) is called a {\it positive point} (resp.\ {\it negative point}).
The associated vector configuration $V_S=({\bm v}^{S}_e)_{e \in E}$ of $S$ is such that
\begin{align*}
{\bm v}^{S}_e :=
\begin{cases}
{\bm v}_e & \text{for $e \in W$,} \\
-{\bm v}_e & \text{for $e \in B$,}
\end{cases}
\end{align*}
where ${\bm v}_e$, for each $e \in E$, is the associated vector of ${\bm p_e}$.
The associated oriented matroid ${\cal M}_S$ of $S$ is defined by the associated oriented matroid ${\cal M}_{V_S}$. 

An oriented matroid can also be specified by a collection of {\it covectors}.
In the next axiom system, we will use the following notation.
For sign vectors $X,Y \in \{ 0,+,-\}^E$, a sign vector $X \circ Y \in \{ 0,+,-\}^E$ is defined as follows.
\[ 
(X \circ Y)(e) :=
\begin{cases}
X(e) & \text{if $X(e) \neq 0$,}\\
Y(e) & \text{otherwise.}
\end{cases}
\]
We also use the following notations.
\[ S(X,Y):= \{ e \in E \mid X(e) = -Y(e) (\neq 0) \}.\]
\[ X \succeq Y \Leftrightarrow \text{$X(e)=Y(e)$ for all $e \in E$ such that $Y(e) \neq 0$.}\]
Another partial ordering $\geq$ of sign vectors will also be used later.
Let us first consider the ordering $- < 0 < +$.
This induces the partial ordering $\geq$ on the covectors ${\cal V}^*$ of an oriented matroid ${\cal M}$ as follows.
For $X,Y \in {\cal V}^*$,
\[ X \geq Y \Leftrightarrow X(e) = Y(e) \text{ or } X(e) > Y(e), \text{ for all $e \in E$.}\]

\begin{defn}(Covector axioms)\\
An element of a set ${\cal V}^* \subseteq \{ 0,+,-\}^E$ of sign vectors satisfying the following axioms
is called a {\it covector}. 
\begin{itemize}
\item[($V0$)] $0 \in {\cal V}^*$.
\item[($V1$)] $X \in {\cal V}^*$ implies $-X \in {\cal V}^*$.
\item[($V2$)] For $X,Y \in {\cal V}^*$, $X \circ Y \in {\cal V}^*$.
\item[($V3$)] For $X,Y \in {\cal V}^*$ and $e_0 \in S(X,Y)$, there exists a covector $Z \in {\cal V}^*$ such that
$Z(e_0)=0$ and $Z(e)=(X \circ Y)(e)$ for all $e \in E \setminus S(X,Y)$. \begin{flushright}(vector elimination)\end{flushright}
\end{itemize}
\label{defn:covector}
\end{defn}
Axiom ($V3$) can be replaced by the following axiom.
\begin{quote}
\begin{itemize}
\item[($V3^f$)] For $X,Y \in {\cal V}^*$ and $U \subseteq S(X,Y)$, there exists a covector $Z \in {\cal V}^*$ and $u \in U$
such that
$Z(u)=0$,
$Y|_U \succeq Z|_U$
and  $Z(e)=(X \circ Y)(e)$ for all $e \in E \setminus S(X,Y)$.
\end{itemize}
\end{quote}
This operation is called {\it conformal elimination}.

It suffices to consider minimal covectors to specify an oriented matroid.
An element of the set ${\cal C}^*$ defined as follows is called a {\it cocircuit}.
\[ {\cal C}^*:= \{ X \in {\cal V}^* \mid  X \not\succ Y \text{ for all $Y \in {\cal V}^* \setminus \{ 0\}$} \}.\]
The set ${\cal C}^*$ is characterized by a simple axiom system, called {\it cocircuit axioms}.
From a chirotope $\chi$ of rank $r$ on a ground set $E$, the cocircuits ${\cal C}^*$ are reconstructed as follows:
\begin{align*}
{\cal C}^* = \{ (\chi(\lambda, e))_{e \in E}  \mid \lambda \in E^{r-1}\}.
\end{align*}

Let us now see the dual notions of covectors and cocircuits.
They are defined through {\it orthogonality}.
Sign vectors $X$ and $Y$ are {\it orthogonal}
if $\{ X(e) \cdot Y(e) \mid e \in E \} = \{ 0\}$ or $\{ X(e) \cdot Y(e) \mid e \in E \} \supseteq \{ +,-\}$
(multiplication of signs is defined analogously to that of numbers).
We write $X \perp Y$ if $X$ and $Y$ are orthogonal.
The sets ${\cal V}$ and ${\cal C}$ are defined as follows.
\begin{align*}
{\cal V} &:= \{ X \in \{ +,-,0\}^E \mid X \perp Y \text{ for all $Y \in {\cal V}^*$}\},\\
{\cal C} &:= \{ X \in {\cal V} \mid  X \not\succ Y \text{ for all $Y \in {\cal V} \setminus \{ 0\}$} \}.
\end{align*}
An element of ${\cal V}$ (resp.\ ${\cal C}$) is called a {\it vector} (resp.\ {\it circuit}) of ${\cal M}$.
Actually, the sets ${\cal V}$ and ${\cal C}$ satisfy the covector axioms and the cocircuit axioms respectively.
The dual oriented matroid ${\cal M}^*$ of ${\cal M}$ is the oriented matroid with the set ${\cal V}$ of covectors.
The following are useful relations between chirotopes and circuits and those between chirotopes and cocircuits.
For $i_1,\dots,i_{r-1},e,f \in E$ such that $\chi (i_1,\dots,i_{r-1},e) \neq 0$ and $\chi (i_1,\dots,i_{r-1},f) \neq 0$
 and $C \in {\cal C}$ with $E \setminus C^0 = \{ i_1,\dots,i_{r-1},e,f\}$,
we have
\[ \chi (e, i_1,\dots, i_{r-1}) = -C(e)C(f)\chi (f, i_1,\dots, i_{r-1}).\]
For $i_1,\dots,i_{r-1},e,f \in E$ such that $\chi (i_1,\dots,i_{r-1},e) \neq 0$ and $\chi (i_1,\dots,i_{r-1},f) \neq 0$
 and $D \in {\cal C}^*$ with $D^0 = \{ i_1,\dots,i_{r-1}\}$,
we have
\[ \chi (e, i_1,\dots, i_{r-1}) = D(e)D(f)\chi (f, i_1,\dots, i_{r-1}).\]

\subsubsection{Basic operations}
Let ${\cal M}=(E,{\cal V}^*)$ be an oriented matroid with the set of covectors ${\cal V}^*$ and $A \subseteq E$.
The {\it deletion} ${\cal M}\setminus A$ of ${\cal M}$ by $A$ is 
the oriented matroid $(E \setminus A,{\cal V}^*|_{E \setminus A})$, where ${\cal V}^*|_{E \setminus A}:=\{ V|_{E \setminus A} \mid V \in {\cal V}^*\}$.
It is also called the {\it restriction} of ${\cal M}$ by $E \setminus A$ and is denoted by ${\cal M}|_{E \setminus A}$.
We use the notation $\rank_{\cal M}(E\setminus A)$ to denote $\rank({\cal M}|_{E\setminus A})$.
When there is no confusion, we simply write it as $\rank(E \setminus A)$.
The {\it contraction} ${\cal M}/A$ of ${\cal M}$ by $A$ is the oriented matroid $(E \setminus A, {\cal V}^*/A)$,
where ${\cal V}^*/A:= \{ V|_{E\setminus A} \mid V \in {\cal V}^*, V|_A = 0 \}$.
We have
\begin{align*}
\rank({\cal M}) = \rank({\cal M}|_A) + \rank({\cal M}/A).
\end{align*}
Let $\chi$ be a chirotope of ${\cal M}$ and $s:=\rank(E \setminus A)$, $t:=\rank(A)$.
Take $a_1,\dots,a_{r-s} \in A$ with $\rank((E \setminus A) \cup \{ a_1,\dots,a_{r-s}\})=r$
and $b_1,\dots,b_t \in E$ with $\rank(\{ b_1,\dots,b_t\})=t$.
Then, we define $\chi_{\setminus A}: (E\setminus A)^s \rightarrow \{ +,-,0\}$ and $\chi_{/A}: (E\setminus A)^{r-t} \rightarrow \{ +,-,0\}$
as follows.
\begin{align*}
\chi_{\setminus A} (i_1,\dots,i_s) := \chi(i_1,\dots,i_s,a_1,\dots,a_{r-s}),\\
\chi_{/A}(j_1,\dots,j_{r-t}) := \chi(j_1,\dots,j_{r-t},b_1,\dots,b_t)
\end{align*}
for all $i_1,\dots,i_s,j_1,\dots,j_{r-t} \in E$.
Then, $\chi_{\setminus A}$ and $\chi_{/A}$ are chirotopes of ${\cal M} \setminus A$ and ${\cal M}/A$ respectively.
It is important to note that $\chi_{\setminus A}$ and $\chi_{/A}$ are determined (up to taking negative) independently of the choice of 
$a_1,\dots,a_{r-s}$ and $b_1,\dots,b_t$. See \cite[p.125]{BLSWZ99}, for details.

If an oriented matroid ${\cal N}$ can be written as ${\cal N} = {\cal M}|_F$ for some $F \subseteq E$ with $|E \setminus F|=1$,
${\cal M}$ is said to be a {\it single element extension} of ${\cal N}$.
We may have two cases: (i) $\rank_{\cal M}(E) = \rank_{\cal M}(F)$ or (ii) $\rank_{\cal M}(E) = \rank_{\cal M}(F)$.
If $\rank_{\cal M}(E) = \rank_{\cal M}(F)+1$, the element $p \in E \setminus F$ is called a {\it coloop} of ${\cal M}$.

\subsubsection{Some classes of oriented matroids}
We are sometimes interested in some special classes of oriented matroids, which have better correspondence with some geometric structures.
An element $e \in E$ is called a {\it loop} if $X(e)=0$ for all $X \in {\cal V}^*$.
An oriented matroid ${\cal M}$ is said to be {\it loopless} if it has no loops.
If $X(e)=X(f)$ (resp.\ $X(e)=-X(f)$) for all $X \in {\cal V}^*$, $e$ and $f$ are said to be {\it parallel} (resp.\ {\it antiparallel}).
${\cal M}$ is said to be {\it simple} if it has neither loops, distinct parallel elements nor distinct antiparallel elements.
If ${\cal M}$ is realized by a vector configuration, a loop corresponds to the zero vector, parallel elements to vectors with the same direction,
and antiparallel elements to vectors with the opposite directions.

\begin{defn}(uniform oriented matroids)\\
An oriented matroid ${\cal M} = (E, \{ \chi, -\chi \})$ of rank $r$ is {\it uniform} if
$\chi (i_1,\dots,i_r) \neq 0$ for all distinct $i_1,\dots,i_r \in E$.
\end{defn}
Equivalently, ${\cal M}$ is uniform if $|C^0| = n -r-1$ for all circuits $C \in {\cal C}$.
It is also possible to say that ${\cal M}$ is uniform if $|D_0| = r-1$ for all cocircuits $D \in {\cal C}^*$.
If ${\cal M}$ is realized by a point configuration $P$, ${\cal M}$ is uniform if and only if $P$ is in general position.

\begin{defn}(acyclic oriented matroids)\\
If an oriented matroid ${\cal M}$ satisfies one of the following equivalent conditions, it is said to be {\it acyclic}.
\begin{itemize}
\item ${\cal M}$ has the positive covector.
\item ${\cal M}$ does not have a non-negative vector.
\end{itemize}
\end{defn}
Oriented matroids arising from point configurations are always acyclic.
If an oriented matroid is not acyclic, it is said to be {\it cyclic}.
\begin{defn}(cyclic oriented matroids)\\
If an oriented matroid ${\cal M}=(E,{\cal V})$ has a non-negative vector, ${\cal M}$ is said to be {\it cyclic}.
If there exists a non-negative vector $X_e \in {\cal V}$ with $X_e(e)=+$ for every $e \in E$, we say that 
${\cal M}$ is {\it totally cyclic}.
\end{defn}
An oriented matroid is acyclic if and only if the dual oriented matroid is totally cyclic.

For an acyclic oriented matroid ${\cal M}$ on a ground set $E$,
an element $e \in E$ is called an {\it extreme point} of ${\cal M}$ if there is the covector $X_e$
such that $X_e(e)=0$ and $X_e(f)=+$ for all $f \in E \setminus \{ e \}$.
If all elements of ${\cal M}$ are extreme points, ${\cal M}$ is said to be a {\it matroid polytope}. 
\begin{defn}(matroid polytopes)\\
If an acyclic oriented matroid ${\cal M}$ satisfies one of the following equivalent conditions, ${\cal M}$ is called a {\it matroid polytope}.
\begin{itemize}
\item 
For every $e \in E$, where $E$ is the ground set of ${\cal M}$, 
there exists the covector $X_e$ of ${\cal M}$ such that $(X_e)^+=\{ e\}$ and $(X_e)^0 = E \setminus \{ e \}$.
\item ${\cal M}$ does not have a vector $V$ with $|V^+| \leq 1$.
\end{itemize}
\end{defn}
\label{def:faces}
The notion of faces of polytopes naturally translates into the oriented matroid setting.
\begin{defn}(faces of matroid polytopes)\\
For a matroid polytope ${\cal M}=(E,{\cal V}^*)$, a subset $F \subseteq E$ is a {\it face} of ${\cal M}$
if there exists $X \in {\cal V}^*$ such that $X^0=F$ and $X^+=E \setminus F$, or equivalently
if ${\cal M}/F$ is acyclic.
\end{defn}
In the rank $3$ case, every matroid polytope is {\it relabeling equivalent} to an {\it alternating matroid} 
(see Proposition \ref{prop:rank_relabeling} in Appendix 1).
\begin{defn}\mbox{}
\begin{itemize}
\item Two oriented matroids ${\cal M}=(E,{\cal V}^{*})$ and ${\cal N}=(F,{\cal W}^{*})$
are said to be {\it relabeling equivalent} (or {\it isomorphic}) if there exists a bijection $\phi:E \rightarrow F$
such that $X \in {\cal V}^{*} \Leftrightarrow \phi (X) \in {\cal W}^{*}$. 
For relabeling equivalent oriented matroids ${\cal M}$ and ${\cal N}$, we write ${\cal M} \simeq {\cal N}$.
\item Two oriented matroids ${\cal M}$ and ${\cal N}$ are said to be {\it reorientation equivalent} if
$_{-A}{\cal M}$ and ${\cal N}$ are relabeling equivalent for some $A \subseteq E$.
Here, $_{-A}{\cal M}$ is the oriented matroid specified by the collection of covectors $\{ _{-A}X \mid X \in {\cal V}^*\}$,
where $_{-A}X \in \{ +,-,0\}^E$ is the vector defined as follows:
\[ _{-A}X(e):=
\begin{cases}
-X(e) & \text{ for $e \in A$,}\\
X(e) & \text{ for $e \notin A$.}
\end{cases}
\]
The oriented matroid $_{-A}{\cal M}$ is called the reorientation of ${\cal M}$ by $A$.
\end{itemize}
\end{defn}
If an oriented matroid ${\cal M}$ is specified in chirotope representation $(E,\{ \chi, -\chi \})$,
the reorientation $_{-A}{\cal M}$ is given by $(E,\{ {_{-A}\chi}, -{_{-A}\chi} \})$, where
\[ {_{-A}}\chi(i_1,\dots,i_r) := (-1)^{|A \cap \{ i_1,\dots,i_r \}|}\chi (i_1,\dots,i_r) \text{ for $i_1,\dots,i_r \in E$.}\]
Two oriented matroids ${\cal M}=(E, \{ \chi, -\chi \})$ and ${\cal N}=(F, \{ \chi', -\chi' \})$ are relabeling equivalent if and only if
there exists a bijection $\phi:E \rightarrow F$ such that
\begin{align*}
 \chi' (\phi (i_1),\dots,\phi (i_r) ) &= \chi(i_1,\dots,i_r) \text{ for all $i_1,\dots,i_r \in E$, or} \\
 \chi'(\phi (i_1),\dots,\phi (i_r) ) &= -\chi(i_1,\dots,i_r) \text{ for all $i_1,\dots,i_r \in E$,}
\end{align*}
where $r$ is the rank of ${\cal M}$ (and ${\cal N}$).

\begin{defn}(Alternating matroids)\\
Let $r,n \in \mathbb{N}$ be such that $n \geq r$.
The alternating matroid $A_{r,n}$ is the oriented matroid $([n],\{ \chi, -\chi \})$ of rank $r$ such that
$\chi (i_1,\dots,i_r)=+$ for all $i_1,\dots,i_r \in [n]$ with $1 \leq i_1 < i_2 < \dots < i_r \leq n$.
\end{defn}
Every alternating matroid is known to be a matroid polytope and to be realizable. 
Circuit and cocircuit structure of alternating matroids is understood well.
\begin{prop}(Circuits and cocircuits of an alternating matroid)
\label{prop:alter_circuits}
\begin{itemize}
\item
A sign vector $X \subseteq \{ +,-,0\}^n$ is a circuit of $A_{r,n}$ if and only if
$|X^0| = n-r-1$ and $X(i) = -X(j)$ for all consequent $i,j \in X^+ \cup X^-$.

\item
A sign vector $Y \subseteq \{ +,-,0\}^n$ is a cocircuit of $A_{r,n}$ if and only if
$|Y^0| = r-1$ and $Y(i) = Y(j)$ for all $i,j \in [n]$ such that $i-j$ is odd, 
and $Y(i) = -Y(j)$ for all $i,j \in [n]$ such that $i-j$ is even.
\end{itemize}
\end{prop} 
For more details on alternating matroids, see \cite[Section 9.4]{BLSWZ99}.

\subsection{Definitions on symmetries}
\subsubsection{Geometric symmetries of point configurations}
Let $P:=( {\bm p_1},\dots,{\bm p_n} ) \in \mathbb{R}^{d \times n}$ be a $d$-dimensional point configuration.
A permutation $\sigma$ on $[n]$ is a {\it geometric symmetry} of $P$ if there exists an affine transformation $f$ such that
\[ f({\bm p_i}) = {\bm p_{\sigma(i)}} \ \text{for all $i \in [n]$.} \]
Here, we present some other equivalent formulations.
Let ${\bm v_1},\dots,{\bm v_n} \in \mathbb{R}^{d+1}$ be the associated vector configuration of $P$.
A permutation $\sigma$ on $[n]$ is a geometric symmetry of $P$ if and only if 
there exists a linear transformation $A$ such that
\[ A{\bm v_i} = {\bm v_{\sigma(i)}}  \text{ for all $i \in [n]$.} \]
Actually, this condition is equivalent to the following condition:
\[
\det({\bm v_{\sigma(i_1)}},\dots,{\bm v_{\sigma(i_{d+1})}}) = \det({\bm v_{i_1}},\dots,{\bm v_{i_{d+1}}})  \text{ for all $i_1, \dots, i_{d+1} \in [n]$}  \text{ $\cdots$ (G1), or}
\]
\[
\det({\bm v_{\sigma(i_1)}},\dots,{\bm v_{\sigma(i_{d+1})}}) = - \det({\bm v_{i_1}},\dots,{\bm v_{i_{d+1}}})  \text{ for all $i_1, \dots, i_{d+1} \in [n]$.} \text{ $\cdots$ (G2).}
\]
The permutation $\sigma$ is called a {\it (geometric) rotational symmetry} of $P$ if (G1) holds.
On the other hand, $\sigma$ is called a {\it (geometric) reflection symmetry} of $P$ if (G2) holds.

\subsubsection{Symmetries of oriented matroids}
For an oriented matroid ${\cal M}=(E,\{ \chi,-\chi\})$ of rank $r$,
a permutation $\sigma$ on $E$ is a {\it symmetry} of ${\cal M}$ if ${\cal M}$ is invariant under $\sigma$, i.e.,
the following holds:
\[ (\sigma \cdot \chi) (i_1,\dots,i_r) := \chi (\sigma(i_1),\dots,\sigma(i_r)) = \chi (i_1,\dots,i_r) \text{ for all $i_1, \dots, i_r \in E$} \text{ $\cdots$ (O1),  or}\]  
\[ (\sigma \cdot \chi) (i_1,\dots,i_r) := \chi (\sigma(i_1),\dots,\sigma(i_r)) = - \chi (i_1,\dots,i_r) \text{ for all $i_1, \dots, i_r \in E$.} \text{ $\cdots$ (O2).} \]
The permutation $\sigma$ is called a {\it (combinatorial) rotational symmetry} of ${\cal M}$ if Condition (O1) holds. 
We call $\sigma$ a {\it (combinatorial) reflection symmetry} of ${\cal M}$ if it satisfies Condition (O2).
The group formed by all rotational symmetries of ${\cal M}$ is called the {\it rotational symmetry group} of ${\cal M}$
and is denoted by $R({\cal M})$.
The group formed by all symmetries of ${\cal M}$ is called the (full) symmetry group of ${\cal M}$ and is denoted by $G({\cal M})$.

A symmetry $\sigma$ of ${\cal M}$ acts on the cocircuits ${\cal C}^*$ and the covectors ${\cal V}^*$ as follows.
For a permutation $\sigma$ on $E$ and a sign vector $X \in \{ +,-,0\}^E$, 
let $\sigma \cdot X \in \{ +,-,0\}^E$ be such that $(\sigma \cdot X)(e) = X (\sigma (e))$ for each $e \in E$.
Then, we have $\sigma \in G({\cal M})$ if and only if $\sigma (X) \in {\cal V}^*$ (resp.\ ${\cal C}^*$) 
for all $X \in {\cal V}^*$ (resp.\ ${\cal C}^*$).

\subsubsection{Combinatorial symmetries of point configurations}
Let $P:=( {\bm p_1},\dots,{\bm p_n} ) \in \mathbb{R}^{d \times n}$ be a $d$-dimensional point configuration.
A permutation $\sigma$ on $[n]$ is an {\it oriented-matroid symmetry} of $P$ if 
the associated oriented matroid of $P$ is invariant under $\sigma$, i.e.,
\[
{\rm sign}(\det({\bm v_{\sigma(i_1)}},\dots,{\bm v_{\sigma(i_{d+1})}})) = {\rm sign}(\det({\bm v_{i_1}},\dots,{\bm v_{i_{d+1}}}))  \text{ for all $i_1, \dots, i_{d+1} \in [n]$, or}
\]
\[
{\rm sign}(\det({\bm v_{\sigma(i_1)}},\dots,{\bm v_{\sigma(i_{d+1})}})) = -{\rm sign}(\det({\bm v_{i_1}},\dots,{\bm v_{i_{d+1}}}))  \text{ for all $i_1, \dots, i_{d+1} \in [n]$.}
\]
On the other hand, $\sigma$ is a {\it matroidal symmetry} if the associated matroid of $P$ is invariant under $\sigma$, i.e.,
\[ \delta_P (\sigma(i_1),\dots,\sigma(i_{d+1})) = \delta_P (i_1,\dots,i_{d+1}) \text{ for all $i_1, \dots, i_{d+1} \in [n]$.} \]
oriented-matroid symmetries and  matroidal symmetries are often called {\it combinatorial symmetries}.

It can easily be checked that every geometric symmetry induces a combinatorial symmetry.
Given a combinatorial symmetry $\sigma$ of $P$, $\sigma$ is said to be {\it geometrically realizable} if
there is a point configuration $P'$ with ${\cal M}_P = {\cal M}_{P'}$ that has $\sigma$  as a geometric symmetry. 
In general, every combinatorial symmetry is not geometrically realizable.
We will study this issue for matroids further in Section 3.

\subsubsection{Symmetries of oriented matroids and inseparability graphs}
Inseparability graphs~\cite{LV78} is a useful tool in studying symmetries of oriented matroids.
Here, we give a brief explanation on inseparability graphs. For more details, see \cite[Section 7.8]{BLSWZ99}.

\begin{defn}(inseparability graphs)\\
For an oriented matroid ${\cal M}=(E,{\cal V}^*)$, the {\it inseparability graph} $IG({\cal M})=({\cal V(M)}, {\cal E(M)})$ of ${\cal M}$ is the graph with
${\cal V(M)}=E$ such that $\{ e, f \} \in {\cal E(M)}$ if and only if  $e \neq f$, and $X(e) = X(f)$ for all $X \in {\cal V}^*$ or $X(e) = -X(f)$ for all $X \in {\cal V}^*$.
\end{defn}
Note that $IG({\cal M})=IG({_{-A}{\cal M}})$ for any $A \subseteq E$.
An important observation is that if $\sigma$ is a symmetry of ${\cal M}$, then $\sigma$ is also a symmetry of $IG({\cal M})$.
Structure of inseparability graphs of uniform oriented matroids is well understood as shown in the following theorem.
\begin{thm}
\label{thm:inseparability}
(\cite{CD90})\\
Let ${\cal M}$ be a uniform oriented matroid of rank $r$ on $E$.
\begin{itemize}
\item If $r=1$ or $r=|E|-1$, then $IG({\cal M})$ is the complete graph on $E$.
\item If $r=2$, then $IG({\cal M})$ is an $|E|$-cycle.
\item If $2 \leq r \leq |E|-2$, then $IG({\cal M})$ is either an $|E|$-cycle or disjoint union of $k \geq 2$ paths.
\end{itemize}
\end{thm}

\subsubsection{Some useful observations}
\label{sec:useful}
Let ${\cal M}$ be a simple oriented matroid of rank $r$ on a ground set with a chirotope $\chi$, the cocircuits ${\cal C}^*$.
Suppose that there exists $A \subseteq E$ with $\rank_{\cal M}(A) = r-1$ that is invariant under $R({\cal M})$.
Let $D \in {\cal C}^*$ be one of the opposite cocircuits with $D^0 \supseteq A$.
Then, it holds that $\sigma \cdot D = D$ for all $\sigma \in R({\cal M})$ if and only if $\sigma$ is a rotational symmetry of ${\cal M}/A$
and that  $\sigma \cdot D = -D$ for all $\sigma \in R({\cal M})$ if and only if $\sigma|_{E \setminus A}$ is a reflection symmetry of ${\cal M}/A$
 (recall that $D=(\chi (\lambda, e))_{e \in E}$ for some $\lambda \in E^{r-1}$).
We have
\begin{align*}
 \chi (e,i_1,\dots,i_{r-1}) &= \chi (\sigma (e), \sigma (i_1), \dots, \sigma (i_{r-1})) \\
&= D(e)D(\sigma(e))\chi (e, \sigma (i_1), \dots, \sigma (i_{r-1})) 
\end{align*}
for all $i_1,\dots,i_{r-1} \in A$ and $e \in E \setminus A$ with $\rank_{\cal M}(A \cup \{ e \} ) = r$.
Therefore, if $\sigma|_{E \setminus A}$ is a rotational symmetry of ${\cal M}/A$, then $\sigma|_A$ is a rotational symmetry of ${\cal M}|_A$.
Similarly,  if $\sigma|_{E \setminus A}$ is a reflection symmetry of ${\cal M}/A$, then $\sigma|_A$ is a reflection symmetry of ${\cal M}|_A$.

\subsubsection{Symmetries of alternating matroids of rank $3$}
In this paper, alternating matroids of rank $3$ will appear repeatedly.
Here, we give a summary on symmetries of alternating matroids of rank $3$.
For now, we relabel the elements $1,2,\dots,n$ of $A_{3,n}$ to $0,1,\dots,n-1$.
Then, for $k=0,1,\dots,n-1$, the permutation $\sigma_k$ on $\{ 0 \} \cup [n-1]$ defined by
\[ \sigma_k (i) = i + k \bmod n\]
is a rotational symmetry of $A_{3,n}$.
We call $\sigma_k$ the {\it $k$-th rotational symmetry} of $A_{3,n}$.
Note that $R(A_{3,n})$ is generated by the 1st rotational symmetry.
On the other hand, the permutation $\tau$ on $\{ 0 \} \cup [n-1]$ defined by
\[ \tau (i) = -i \bmod n\]
is a reflection symmetry of $A_{3,n}$.
The reflection symmetries of $A_{3,n}$ are described by $\tau, \sigma_1 \tau, \dots, \sigma_{n-1} \tau$.
We remark that $\tau^2 =\id$ and $\tau \sigma_k \tau^{-1} = \sigma_k^{-1}$ for every $k$.
Therefore, we have $R(A_{3,n}) \simeq \mathbb{Z}_n$ and $G(A_{3,n}) \simeq D_{2n}$.

The alternating matroid $A_{3,n}$ is geometrically realized by a regular $n$-gon.
In this setting, $k$-th rotational symmetry $\sigma_k$ corresponds to the rotation by $\frac{2k}{n}$.
The reflection symmetry $\tau \sigma_k$ corresponds to one of the reflection symmetries of a regular $n$-gon.
When $n$ is odd, $\tau \sigma_k$ is the reflection across the axis passing through the point $k$ and the midpoint of points $k+\frac{n-1}{2}$
and $k-\frac{n-1}{2}$ (the numbers are interpreted modulo $n$).
When $n$ and $k$ are even,  $\tau \sigma_k$ is the reflection across the axis formed by points $k$ and $k+\frac{n}{2}$.
If $n$ is even and $k$ is odd, $\tau \sigma_k$ is the reflection across the axis determined by the midpoint of $k$ and $k+1$,
and that of $k+\frac{n}{2}$ and $k+\frac{n}{2}+1$.

\section*{Notations}
Here, we summarize the notations we have introduced.
In the following, we suppose that ${\cal M}$ and ${\cal N}$ are oriented matroids on a ground set $E$
and that $X$ and $Y$ are sign vectors on $E$, and that $A$ is a subset of $E$.
\begin{itemize}
\item $[n] := \{ 1,2,\dots,n\}$.
\item $X^0: = \{ e \in E \mid X(e) = 0\}$.
\item $X^+: = \{ e \in E \mid X(e) = +\}$.
\item $X^-: = \{ e \in E \mid X(e) = -\}$.
\item $X \succeq Y$: $X(e) = Y(e)$ or $Y(e)=0$ for all $e \in E$.
\item $X \geq Y$: $X(e) \geq Y(e)$ for all $e \in E$, where $- < 0 < +$.
\item ${\rm span}_{\cal M}(A)$: the flat of ${\cal M}$ spanned by $A$.
\item $\rank_{\cal M}(A)$: the rank of $X$ of the flat spanned by $A$.
\item $_{-A}{\cal M}$: the reorientation of ${\cal M}$ by $A$.
\item ${\cal M}|_A$: the restriction of ${\cal M}$ to $A$.
\item ${\cal M}/A$: the contraction of ${\cal M}$ by $A$.
\item ${\cal M} \simeq {\cal N}$: the oriented matroids ${\cal M}$ and ${\cal N}$ are relabeling equivalent (isomorphic).
\item $IG({\cal M})$: the inseparability graph of ${\cal M}$.
\item $A_{r,n}$: the alternating matroid of rank $r$ on the ground set $[n]$.
\item $R({\cal M})$: the rotational symmetry group of ${\cal M}$.
\item $G({\cal M})$: the full symmetry group of ${\cal M}$.
\item $D_{2n}$: the dihedral group of order $2n$.
\end{itemize}

\section{A gap between matroidal symmetries and geometric symmetries of point configurations}
As a starting point, we study a gap between matroidal symmetries and geometric symmetries of point configurations.

\subsection{A rank 4 matroid with 8 elements having a matroidal symmetry that cannot be realized geometrically}
\label{subsec:construction}
Let $P=( {\bm p_1},{\bm p_2},\dots,{\bm p_8} ) \in \mathbb{R}^{3 \times 8}$ be the point configuration defined by
\begin{align*}
({\bm p_1},{\bm p_2},\dots,{\bm p_8})
=
\begin{pmatrix}
0 & 0 & 1 & 1 & 0 & 0 & \frac{1}{4} & \frac{1}{5} \\
0 & 0 & 0 & 0 & 1 & 1 & \frac{1}{4} & \frac{1}{5} \\
0 & 1 & 0 & 1 & 0 & 1 & -1          & 2
\end{pmatrix}.
\end{align*}
The associated matroid $M_P$ is specified by the following $4$-element non-bases.
\begin{align*}
\{ 1,2,3,4 \}, \{ 1,2,5,6\}, \{ 3,4,5,6\}. 
\end{align*}
The matroid $M_P$ has a symmetry $\sigma$:
\begin{align*}
 \sigma = 
\begin{pmatrix}
1 & 2 & 3 & 4 & 5 & 6 & 7 & 8 \\
3 & 4 & 5 & 6 & 1 & 2 & 7 & 8
\end{pmatrix}.
\end{align*}
In the following, we prove that $M_P$ cannot be realized as a point configuration that has $\sigma$ as a geometric symmetry.
For $S \subseteq \mathbb{R}^3$, we will denote by ${\rm aff}(S)$ (resp. ${\rm conv}(S), {\rm relint}(S)$) the affine hull (resp. convex hull, relative interior) of $S$.
We do not write parentheses when there is no confusion.

We assume that there is an affine automorphism $f$ of $P$ inducing $\sigma$.
Then $C := {\rm conv}\{ {\bm p_1},{\bm p_3},{\bm p_5} \}$ is setwise invariant under $f$.
Note that $f$ has a fixed point ${\bm w_0}$ in ${\rm relint}({\rm conv}\{ {\bm p_1},{\bm p_3},{\bm p_5} \})$ (${\bm w_0}=({\bm p_1}+{\bm p_3}+{\bm p_5})/3$).
Since both ${\bm w_0}$ and ${\bm p_8}$ are invariant under $f$, ${\rm aff}\{{\bm w_0},{\bm p_8}\}$ is pointwise invariant under $f$.
The same applies to ${\rm aff}\{{\bm w_0},{\bm p_7}\}$.
Assume that ${\bm p_7} \notin {\rm aff}\{{\bm w_0},{\bm p_8}\}$. 
Then $D := {\rm aff}\{{\bm w_0},{\bm p_7},{\bm p_8}\}$ is $2$-dimensional and is pointwise 
invariant under $f$. Note that ${\rm dim}({\rm aff}(C) \cap D) = 1$ since ${\rm aff}(C) \cap D \neq \emptyset$.
Thus the restriction $f|_{{\rm aff}(C)}$ fixes a $1$-dimensional space pointwisely.
This contradicts to the fact that every non-trivial rotational symmetry in the 2-dimensional Euclidean space has a unique fixed point.
Therefore, we have ${\bm p_7} \in {\rm aff}\{{\bm w_0},{\bm p_8}\}$.

Now we assume that the lines ${\rm aff}\{{\bm p_1},{\bm p_2}\}$ and ${\rm aff}\{{\bm p_3},{\bm p_4}\}$  are parallel.
Then the lines ${\rm aff}\{{\bm p_1},{\bm p_2}\}$,
 ${\rm aff}\{{\bm p_3},{\bm p_4}\}$ and ${\rm aff}\{{\bm p_5},{\bm p_6}\}$ are all parallel.
Therefore, the lines ${\rm aff}\{{\bm p_1},{\bm p_2}\}$ and ${\rm aff}\{{\bm w_0},{\bm w_1}\}(={\rm aff}\{{\bm p_7},{\bm p_8}\})$ are also parallel
and thus $\chi_P (1,2,7,8) = 0$, which is a contradiction.

Therefore, the lines ${\rm aff}\{{\bm p_1},{\bm p_2}\}$ and ${\rm aff}\{{\bm p_3},{\bm p_4}\}$ are not parallel.
Then the lines ${\rm aff}\{{\bm p_1},{\bm p_2}\}$, ${\rm aff}\{{\bm p_3},{\bm p_4}\}$ and ${\rm aff}\{{\bm p_5},{\bm p_6}\}$ are all non-parallel.
In this case, the lines ${\rm aff}\{{\bm p_1},{\bm p_2}\}$ and ${\rm aff}\{{\bm p_3},{\bm p_4}\}$ intersect because points 
${\bm p_1}$, ${\bm p_2}$, ${\bm p_3}$ and ${\bm p_4}$ are on the same plane.
The same applies to ${\rm aff}\{{\bm p_3},{\bm p_4}\}$ and ${\rm aff}\{{\bm p_5},{\bm p_6}\}$, and to ${\rm aff}\{{\bm p_1},{\bm p_2}\}$ and ${\rm aff}\{{\bm p_5},{\bm p_6}\}$.
Those three intersection points are in fact the same. This is proved by the following simple computation.
\begin{align*}
\begin{split}
&{\rm aff}\{{\bm p_1},{\bm p_2}\} \cap {\rm aff}\{{\bm p_3},{\bm p_4}\} \\
&=  ({\rm aff}\{{\bm p_1},{\bm p_2},{\bm p_3},{\bm p_4}\} \cap {\rm aff}\{{\bm p_1},{\bm p_2},{\bm p_5},{\bm p_6}\}) \cap ({\rm aff}\{{\bm p_1},{\bm p_2},{\bm p_3},{\bm p_4}\} \cap {\rm aff}\{{\bm p_3},{\bm p_4},{\bm p_5},{\bm p_6}\})\\
&= {\rm aff}\{{\bm p_1},{\bm p_2},{\bm p_3},{\bm p_4}\} \cap {\rm aff}\{{\bm p_3},{\bm p_4},{\bm p_5},{\bm p_6}\}  \cap {\rm aff}\{{\bm p_1},{\bm p_2},{\bm p_5},{\bm p_6}\}.
\end{split}
\end{align*}
This intersection is invariant under $A$ and thus is on ${\rm aff}\{{\bm p_7},{\bm p_8}\}$.
From the above discussion, the lines ${\rm aff}\{{\bm p_5},{\bm p_6}\}$ and ${\rm aff}\{{\bm p_7},{\bm p_8}\}$ have the intersection ${\bm p}$, and thus
points ${\bm p_5}$, ${\bm p_6}$, ${\bm p_7}$ and ${\bm p_8}$ are on the same plane. This leads to $\chi_P(5,6,7,8) = 0$, which is a contradiction. 
\begin{figure}[h]
\begin{center}
\includegraphics[scale=0.2]{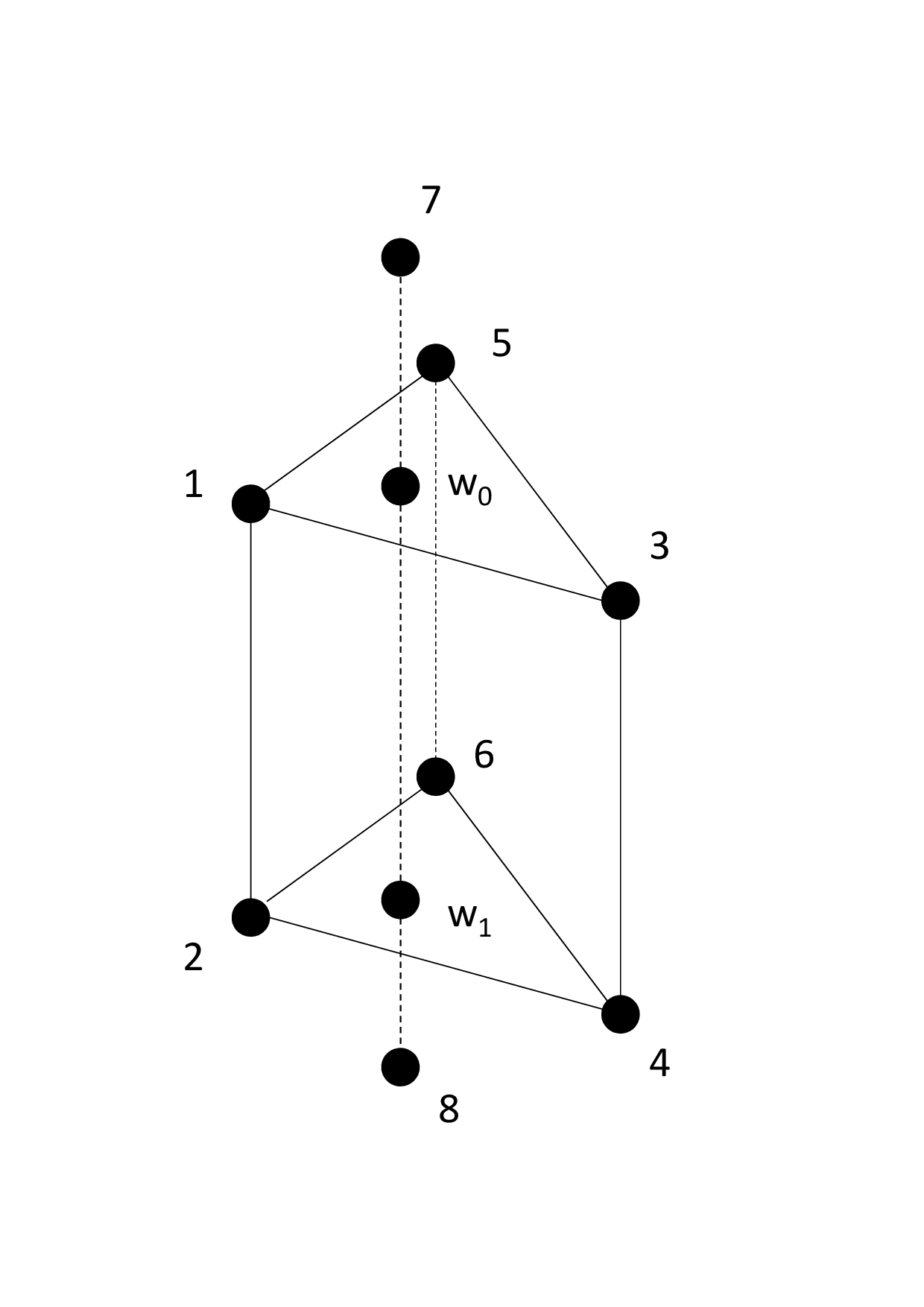}
\includegraphics[scale=0.2]{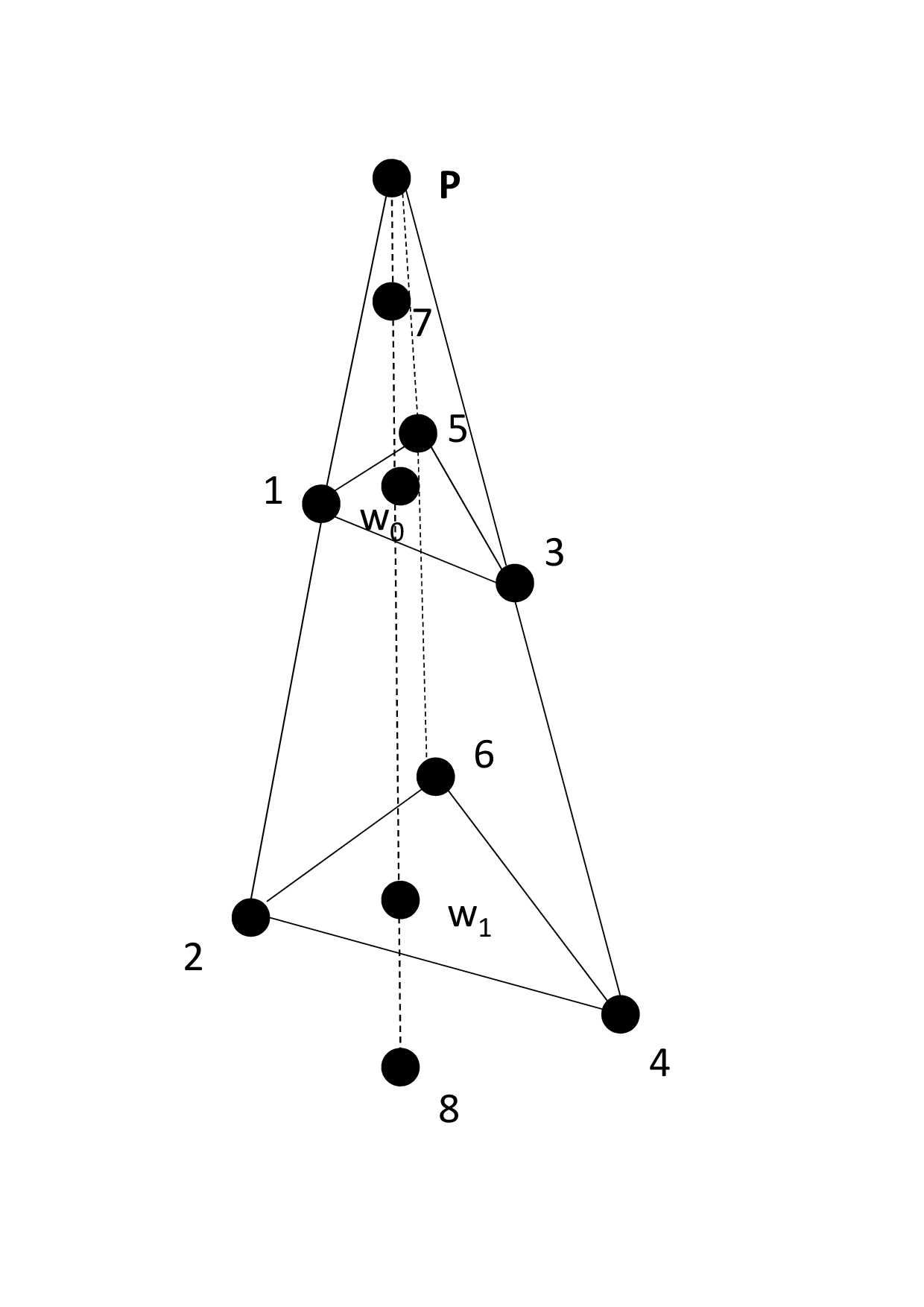}
\caption{Case1 [left], Case2 [right]}
\end{center}
\end{figure}

\subsection{General construction}
In a similar way to Section \ref{subsec:construction}, 
we prove that from an arbitrary $2$-dimensional configuration of $n$ points with a non-trivial geometric rotational symmetry of order $m \geq 3$,
we can construct a $3$-dimensional configuration of $2n+2$ points with a matroidal symmetry that cannot
be realized geometrically.
\begin{thm}
\label{thm:general_construction}
Let $P:=( {\bm p_1},\dots,{\bm p_n} ) \in \mathbb{R}^{2 \times n}$ be a point configuration with a non-trivial geometric rotational symmetry $\sigma$
of order $m \geq 3$. 
Then let us consider a point configuration $Q := ( {\bm q_1},\dots,{\bm q_{2n+2}} ) \in \mathbb{R}^{3 \times (2n+2)}$ 
with ${\bm q_i}:= \begin{pmatrix} {\bm p_i} \\ 0\end{pmatrix},{\bm q_{n+i}}:= \begin{pmatrix} {\bm p_i} \\ 1\end{pmatrix}$ for $i=1,\dots,n$
where ${\bm q_{2n+1}},{\bm q_{2n+2}}$ are {\it generic} points and 
where 
the line ${\rm aff}\{ {\bm q_{2n+1}},{\bm q_{2n+2}}\}$ is not parallel to 
${\rm aff}\{ {\bm p_1},{\bm p_{n+1}}\}$.
(Here, we say that a point ${\bm r}$ in a point configuration $R$ is generic if it holds that ${\bm r} \notin {\rm aff}\{ {{\bm r_1},{\bm r_2}}\}$ for all ${\bm r_1},{\bm r_2} \in R \setminus \{ {\bm r} \}$.)
Then, the point configuration $Q$ has a matroidal symmetry that cannot be realized geometrically.
\end{thm}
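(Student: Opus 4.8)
The plan is to exhibit the matroidal symmetry explicitly and then prove its non-realizability by a parametrised version of the computation in Subsection 3.1. Let $\tau$ be the permutation of $\{1,\dots,2n+2\}$ with $\tau(i)=\sigma(i)$, $\tau(n+i)=n+\sigma(i)$ for $i=1,\dots,n$, and $\tau(2n+1)=2n+1$, $\tau(2n+2)=2n+2$; it has the same order $m$ as $\sigma$. Assuming, as we may, that the points of $P$ are pairwise distinct, I would first verify that $\tau$ is an M-symmetry of $M_Q$. For $4$-subsets of $\{1,\dots,2n\}$ this follows from the affine bijection $g$ of $\mathbb{R}^3$ sending $(x,z)$ to $(\rho(x),z)$, where $\rho$ is the planar rotation realising $\sigma$ (so $\rho(p_i)=p_{\sigma(i)}$): $g$ maps $q_i$ to $q_{\sigma(i)}$ and $q_{n+i}$ to $q_{n+\sigma(i)}$, hence preserves affine (in)dependence inside the prism. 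For $4$-subsets meeting $\{2n+1,2n+2\}$ it suffices that every such subset be a basis of $M_Q$; this is the point where ``generic'' must be read slightly more strongly than bare line-avoidance, and I would take $q_{2n+1},q_{2n+2}$ outside the finitely many planes spanned by triples from $\{q_1,\dots,q_{2n}\}$ and with $\mathrm{aff}(\{q_{2n+1},q_{2n+2}\})$ skew to every ``vertical'' line $\mathrm{aff}(\{q_i,q_{n+i}\})$ (the stated non-parallelism being a special case), which still leaves a nonempty choice.

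For non-realizability, suppose toward a contradiction that some configuration $Q'=\{q_1',\dots,q_{2n+2}'\}$ has $M_{Q'}=M_Q$ and carries an affine map $f$ with $f(q_i')=q_{\tau(i)}'$ for all $i$. Since the $q_i'$ affinely span $\mathbb{R}^3$, $f^m=\mathrm{id}$, and $f$ has order exactly $m$ because it induces $\tau$. As $\tau$ respects the partition $\{1,\dots,n\}\cup\{n+1,\dots,2n\}\cup\{2n+1,2n+2\}$ and fixes the last block, $f$ maps each layer $A_0=\{q_1',\dots,q_n'\}$ and $A_1=\{q_{n+1}',\dots,q_{2n}'\}$ onto itself and fixes $q_{2n+1}',q_{2n+2}'$. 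Since $\sigma$ has order $>2$, $P$ affinely spans $\mathbb{R}^2$, so $\pi_0:=\mathrm{aff}(A_0)$ and $\pi_1:=\mathrm{aff}(A_1)$ are $f$-invariant planes on which $f$ acts as an affine map of order $m>2$, hence as a nontrivial rotation with a unique fixed point — namely the centroid $w_0$ of $A_0$, resp. $w_1$ of $A_1$. Also $q_{2n+1}',q_{2n+2}'\notin\pi_0\cup\pi_1$ (each lies off the plane spanned by an affinely independent triple from a layer, by the basis property above). A short argument (in the style of Subsection 3.1, or via the elementary fact that a finite-order affine map of $\mathbb{R}^3$ of order $>2$ has fixed set of dimension at most $1$) then shows that $\mathrm{Fix}(f)$ is exactly the line $\ell:=\mathrm{aff}(\{q_{2n+1}',q_{2n+2}'\})$, that $w_0,w_1\in\ell$, and that $w_0\ne w_1$ — otherwise $\pi_0\cap\pi_1$ (a line, as $\pi_0\ne\pi_1$ since $\{1,\dots,2n\}$ has rank $4$ in $M_Q$) would be $f$-invariant and pass through the centre of the rotation $f|_{\pi_0}$, impossible for order $>2$.

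Now run the dichotomy of Subsection 3.1 on the ``edge'' lines $L_i':=\mathrm{aff}(\{q_i',q_{n+i}'\})$, which $f$ permutes by $L_i'\mapsto L_{\sigma(i)}'$. Fix $i_0$ in a nontrivial $\sigma$-orbit $O$; since a planar rotation of order $m$ moves every non-fixed point in an orbit of size exactly $m$, $|O|=m\ge 3$. If $L_{i_0}'\parallel L_{\sigma(i_0)}'$, then applying powers of $f$ makes all $L_j'$ ($j\in O$) parallel, and a centroid computation as in Subsection 3.1 (using that the centroid of $\{q_j':j\in O\}$ is again $w_0$, and likewise in the upper layer) gives $\ell\parallel L_{i_0}'$. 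Otherwise $L_{i_0}'\not\parallel L_{\sigma(i_0)}'$; here every two edge lines over $O$ are coplanar, because each $\{i,n+i,j,n+j\}$ ($i\ne j$) is a non-basis of $M_Q$ (in $Q$ those four points lie in the vertical plane over $\mathrm{aff}(\{p_i,p_j\})$), and running the displayed affine-hull computation of Subsection 3.1 with the planes $\Pi_{ij}:=\mathrm{aff}(\{q_i',q_{n+i}',q_j',q_{n+j}'\})$ — legitimate because $p_i,p_j,p_k$ are never collinear for distinct $i,j,k\in O$ (three distinct points on a circle) — yields $L_i'\cap L_j'=\Pi_{ij}\cap\Pi_{ik}\cap\Pi_{jk}$ for all distinct $i,j,k\in O$; taking $i=i_0$, $k=\sigma(i_0)$ and letting $j$ vary shows all $L_j'$ ($j\in O$) pass through the point $x^*:=L_{i_0}'\cap L_{\sigma(i_0)}'$, which is therefore fixed by $f$ and so lies on $\ell$. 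In either case $\ell$ and $L_{i_0}'$ are coplanar, i.e. $\{2n+1,2n+2,i_0,n+i_0\}$ is a non-basis of $M_{Q'}=M_Q$ — contradicting that it is a basis of $M_Q$, since the line through $q_{2n+1},q_{2n+2}$ is skew to the vertical line over $p_{i_0}$. Hence no such $f$ exists, and $\tau$ is a matroidal symmetry of $M_Q$ that cannot be realised geometrically.

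The conceptual core, exactly as in Subsection 3.1, is squeezing $\mathrm{Fix}(f)$ down to a single line and then showing the two auxiliary points $q_{2n+1}',q_{2n+2}'$ must lie on it, which collapses a $4$-point basis. I expect the main obstacle to be the non-parallel case: transporting the displayed affine-hull identity to an arbitrary orbit of size $m$ and verifying that the resulting concurrency point is genuinely $f$-fixed; a lesser but necessary care point is pinning down the exact genericity of $q_{2n+1},q_{2n+2}$ that is needed both to make $\tau$ a matroidal symmetry and to keep $\{2n+1,2n+2,i_0,n+i_0\}$ a basis.
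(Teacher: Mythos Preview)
Your proposal is correct and follows essentially the same route as the paper's proof: exhibit the permutation $\tau$, assume a realization with an affine symmetry $f$, pin down the fixed line $\ell=\mathrm{aff}(\{q'_{2n+1},q'_{2n+2}\})$ via centroids, and then run the parallel/non-parallel dichotomy on the vertical edge lines to force $\{i_0,n+i_0,2n+1,2n+2\}$ to be a non-basis. The only notable variations are cosmetic: the paper picks $i_0$ as an extreme point of the realized lower layer and works with the centroids of its $f$-orbit, whereas you take any index in a nontrivial $\sigma$-orbit and use the full-layer centroids (which coincide with the orbit centroids, both being the unique fixed point of $f|_{\pi_0}$). You are also more explicit than the paper about two points the paper leaves implicit: why $\tau$ is a matroidal symmetry on $4$-subsets meeting $\{2n+1,2n+2\}$, and that the stated genericity/non-parallelism must in effect be read as ``$\mathrm{aff}(\{q_{2n+1},q_{2n+2}\})$ skew to each vertical line $\mathrm{aff}(\{q_i,q_{n+i}\})$'' for the final contradiction to fire.
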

\begin{proof}
Let $M_Q$ is the associated matroid of $Q$.
Note that $M_Q$ has a symmetry
\begin{align*}
\tau := 
\begin{pmatrix}
1          & \cdots & n          & n+1          & \cdots & 2n           & 2n+1 & 2n+2 \\
\sigma (1) & \cdots & \sigma (n) & n+\sigma(1)  & \cdots & n+\sigma(n)  & 2n+1 & 2n+2
\end{pmatrix}.
\end{align*}
Note that the order of $\tau$ is $m$.
Suppose that there is a realization $R=({\bm r_1},\dots,{\bm r_{2n+2}}) \in \mathbb{R}^{3 \times (2n+2)}$of $M_Q$
and an affine transformation $f$ of $\mathbb{R}^{3}$ with $f({\bm r_i}) = {\bm r_{\tau(i)}}$ for $i=1,\dots,2n+2$.

Let ${\bm r_{i_0}}$ be an extreme point of $C_1:={\rm conv}\{ {\bm r_1},\dots,{\bm r_n} \}$.
Then ${\bm r_{i_0+n}}$ is also an extreme point of $C_2:={\rm conv}\{ {\bm r_{n+1}},\dots,{\bm r_{2n}}\}$.
Consider the orbits $O_1=\{ {\bm r_{i_0}},\dots,{\bm r_{i_m}} \}$ and $O_2=\{ {\bm r_{i_0+n}},\dots,{\bm r_{i_m+n}} \}$ under the action of the cyclic group generated by $\tau$.
We remark that $|O_1|=|O_2| \geq 3$. 

Now let us consider ${\bm w_0}:=\frac{1}{m+1}({\bm r_{i_0}}+\dots+{\bm r_{i_m}})$ and ${\bm w_1}:=\frac{1}{m+1}({\bm r_{i_0+n}}+\dots+{\bm r_{i_m+n}})$.
Since ${\bm w_0}$ and ${\bm w_1}$ are invariant under $f$, the line ${\rm aff}\{ {\bm w_0}, {\bm r_{2n+1}}\}$ is pointwise invariant under $f$.
Note that ${\bm r_{2n}} \in {\rm aff}\{ {\bm w_0}, {\bm r_{2n+1}}\}$. Otherwise, we have a $2$-dimensional space
$D:={\rm aff}\{ {\bm w_0}, {\bm r_{2n}}, {\bm r_{2n+1}}\}$ pointwise invariant under $f$.
Since $D \cap {\rm aff}(C_1) \neq \emptyset$, it holds that ${\rm dim}({\rm aff}(C_1) \cap D) = 1$.
Thus the restriction $f|_{{\rm aff}(C_1)}$ fixes a $1$-dimensional space pointwisely,
which is a contradiction.
Therefore, it holds that ${\bm r_{2n}} \in {\rm aff}\{ {\bm w_0}, {\bm r_{2n+1}}\}$. 
Similarly, we have ${\bm w_1} \in {\rm aff}\{ {\bm w_0}, {\bm r_{2n+1}}\}$.

Now assume that the lines ${\rm aff}\{ {\bm r_{i_0}},{\bm r_{i_0+n}} \}$ and ${\rm aff}\{ {\bm r_{i_1}},{\bm r_{i_1+n}} \}$
are parallel.
Then the lines ${\rm aff}\{ {\bm r_{i_0}},{\bm r_{i_0+n}} \}$,  ${\rm aff}\{ {\bm r_{i_0}},{\bm r_{i_0+n}} \}$, $\dots,$ ${\rm aff}\{ {\bm r_{i_m}},{\bm r_{i_m+n}} \}$ are all parallel.
This implies that the lines ${\rm aff}\{ {\bm r_{i_0}},{\bm r_{i_0+n}} \}$ and ${\rm aff}\{ {\bm w_0},{\bm w_1} \}$ are parallel and thus that
$\chi_P(i_0,i_0+n,2n+1,2n+2)=0$. This is a contradiction.

Let us consider the case when the lines ${\rm aff}\{ {\bm r_{i_0}},{\bm r_{i_0+n}} \}$ and ${\rm aff}\{ {\bm r_{i_1}},{\bm r_{i_1+n}} \}$
are not parallel.
Then the lines ${\rm aff}\{ {\bm r_{i_0}},{\bm r_{i_0+n}} \}$ and ${\rm aff}\{ {\bm r_{i_1}},{\bm r_{i_1+n}} \}$ have an intersection point
since $\chi_P(i_0,i_0+n,i_1,i_1+n)=0$.
The same applies to
 ${\rm aff}\{ {\bm r_{i_1}},{\bm r_{i_1+n}} \}$ and ${\rm aff}\{ {\bm r_{i_2}},{\bm r_{i_2+n}} \}$, and
 ${\rm aff}\{ {\bm r_{i_0}},{\bm r_{i_0+n}} \}$ and 
 ${\rm aff}\{ {\bm r_{i_2}},{\bm r_{i_2+n}} \}$.
 These three intersection points are in fact the same.
 This follows from the following relation:
 \begin{align*}
 \begin{split}
 &{\rm aff}\{ {\bm r_{i_0}},{\bm r_{i_0+n}} \} \cap {\rm aff}\{ {\bm r_{i_1}},{\bm r_{i_1+n}} \} \\
 &= {\rm aff}\{ {\bm r_{i_0}},{\bm r_{i_0+n}},{\bm r_{i_1}},{\bm r_{i_1+n}} \}
 \cap
 {\rm aff}\{ {\bm r_{i_1}},{\bm r_{i_1+n}},{\bm r_{i_2}},{\bm r_{i_2+n}} \}
 \cap
 {\rm aff}\{ {\bm r_{i_0}},{\bm r_{i_0+n}},{\bm r_{i_2}},{\bm r_{i_2+n}} \}\\
 &={\rm aff}\{ {\bm r_{i_1}},{\bm r_{i_1+n}} \} \cap {\rm aff}\{ {\bm r_{i_2}},{\bm r_{i_2+n}} \} \\
 &= {\rm aff}\{ {\bm r_{i_0}},{\bm r_{i_0+n}} \} \cap {\rm aff}\{ {\bm r_{i_2}},{\bm r_{i_2+n}} \} 
\end{split}
 \end{align*}
 Repeating the same argument, we conclude that
 \begin{align*}
  {\rm aff}\{ {\bm r_{i_0}},{\bm r_{i_0+n}} \} \cap \dots \cap {\rm aff}\{ {\bm r_{i_m}},{\bm r_{i_m+n}} \}
  \end{align*}
 is non-empty.
 Since it is a fixed point of $f$, it is on the line ${\rm aff}\{ {\bm w_{0}},{\bm w_{1}} \}={\rm aff}\{ {\bm r_{2n+1}},{\bm r_{2n+2}} \}$.
 This implies $\chi_P(i_0,i_0+n,2n+1,2n+2)=0$, which is a contradiction.
\end{proof}
\begin{cor}
For $n, p \geq 3$,
from any $2$-dimensional configuration of $n$ points with a geometric rotational symmetry of order $p$, we can construct
a $3$-dimensional configuration of $2n+2$ points with a matroidal symmetry of order $p$ that cannot be realized geometrically.
\end{cor}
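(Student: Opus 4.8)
The strategy is to peel off the easy regime and concentrate on the genuinely new case. If $\sigma$ has order $m>2$, then the hypotheses of Theorem~3.1 are met and it already produces the required $3$-dimensional configuration on $2n+2$ points; so assume from now on that $\sigma$ is a non-trivial rotational symmetry of order $2$, i.e. the point reflection of $\mathbb{R}^2$ about some centre $o$, so that ${\bm p_{\sigma(i)}}=2o-{\bm p_i}$ for every $i$. I use the construction of Theorem~3.1 verbatim, with the same M-symmetry $\tau$ of $M_Q$, except that I impose slightly stronger genericity on the last two points: all $2n+2$ points are distinct and neither ${\bm q_{2n+1}}$ nor ${\bm q_{2n+2}}$ lies on a plane spanned by three of the others. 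Only finitely many lines and planes have to be missed, so this is possible; it does not disturb $\tau$, and it guarantees that whenever ${\bm q_i},{\bm q_j},{\bm q_k}$ ($i,j,k\le 2n$) are affinely independent, both $\{i,j,k,2n+1\}$ and $\{i,j,k,2n+2\}$ are bases of $M_Q$. Now suppose, for contradiction, that $R=({\bm r_1},\dots,{\bm r_{2n+2}})$ realizes $M_Q$ together with an affine $f$ of $\mathbb{R}^3$ with $f({\bm r_i})={\bm r_{\tau(i)}}$ for all $i$. As $M_Q$ has rank $4$ the points ${\bm r_i}$ affinely span $\mathbb{R}^3$, so $f$ is uniquely determined, $f^2=\mathrm{id}$ (because $\tau^2=\mathrm{id}$), and $f\ne\mathrm{id}$ (because $\sigma\ne\mathrm{id}$). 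Hence $\mathrm{Fix}(f)$ is an affine subspace of dimension $1$ or $2$; it contains the distinct points ${\bm r_{2n+1}},{\bm r_{2n+2}}$ and, since affine maps respect midpoints, every $\tfrac12({\bm r_i}+{\bm r_{\sigma(i)}})$ and $\tfrac12({\bm r_{i+n}}+{\bm r_{\sigma(i)+n}})$.

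If $\sigma$, as a permutation of $\{1,\dots,n\}$, has a fixed index $c$, then $\tau$ fixes $c,c+n,2n+1,2n+2$, hence $f$ fixes ${\bm r_c},{\bm r_{c+n}},{\bm r_{2n+1}},{\bm r_{2n+2}}$, which are affinely independent because that quadruple is a basis of $M_Q$; thus $f=\mathrm{id}$, a contradiction. So $\sigma$ is fixed-point-free, $n=2k$ is even with $k\ge 2$, and $\sigma=(a_1\,b_1)\cdots(a_k\,b_k)$. Write $\Sigma_0:={\rm aff}(\{{\bm r_1},\dots,{\bm r_n}\})$ and $\Sigma_1:={\rm aff}(\{{\bm r_{n+1}},\dots,{\bm r_{2n}}\})$: these realize rank-$3$ flats, so are $2$-dimensional; they are distinct (their union has rank $4$), both $f$-invariant, and each avoids ${\bm r_{2n+1}},{\bm r_{2n+2}}$. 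The midpoints ${\bm m_j}:=\tfrac12({\bm r_{a_j}}+{\bm r_{b_j}})$ lie in $\mathrm{Fix}(f)\cap\Sigma_0$ and ${\bm m_j'}:=\tfrac12({\bm r_{a_j+n}}+{\bm r_{b_j+n}})$ in $\mathrm{Fix}(f)\cap\Sigma_1$. Suppose first $\dim\mathrm{Fix}(f)=1$: then $\mathrm{Fix}(f)$ is a line meeting each $\Sigma_t$ in at most one point (it contains ${\bm r_{2n+1}}\notin\Sigma_0,\Sigma_1$), so all ${\bm m_j}$ equal a single point ${\bm x_0}$ and all ${\bm m_j'}$ a single point ${\bm x_1}$, whence $f|_{\Sigma_0}$ is the point reflection about ${\bm x_0}$ and $f|_{\Sigma_1}$ about ${\bm x_1}$. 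If ${\bm x_0}={\bm x_1}$, then $f$ acts as ${\bm r}\mapsto 2{\bm x_0}-{\bm r}$ on ${\bm r_1},\dots,{\bm r_{2n}}$, hence on all of $\mathbb{R}^3$, forcing ${\bm r_{2n+1}}={\bm x_0}={\bm r_{2n+2}}$ — impossible since $\{2n+1,2n+2\}$ is independent. If ${\bm x_0}\ne{\bm x_1}$, then $\mathrm{Fix}(f)={\rm aff}(\{{\bm x_0},{\bm x_1}\})$ lies in the plane ${\rm aff}(\{{\bm r_{a_1}},{\bm r_{b_1}},{\bm r_{a_1+n}},{\bm r_{b_1+n}}\})$ (it contains the midpoints ${\bm x_0},{\bm x_1}$ of two of its "diagonals"), so ${\bm r_{2n+1}}\in{\rm aff}(\{{\bm r_{a_1}},{\bm r_{b_1}},{\bm r_{a_1+n}}\})$, contradicting that $\{a_1,b_1,a_1+n,2n+1\}$ is a basis of $M_Q$.

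There remains $\dim\mathrm{Fix}(f)=2$, say $\mathrm{Fix}(f)=\Pi$; this is the only case in which the proof genuinely departs from Theorem~3.1, and it is the main obstacle. Here $f$ is the reflection of $\mathbb{R}^3$ about $\Pi$, and $f|_{\Sigma_0}$ is the reflection of the plane $\Sigma_0$ about the line $\ell_0:=\Pi\cap\Sigma_0$ — no longer a rotation, so the $2$-dimensional fixed-point theorem that drove Theorem~3.1 gives nothing. But the order-$2$ symmetry of $P$ leaves a rigid trace: since $f|_{\Sigma_0}$ is a reflection, each ${\bm r_{b_j}}$ is the mirror image of ${\bm r_{a_j}}$ in $\ell_0$, so the $k\ge 2$ lines ${\rm aff}(\{{\bm r_{a_j}},{\bm r_{b_j}}\})$ are all perpendicular to $\ell_0$, hence mutually parallel, with midpoints on $\ell_0$; symmetrically for the second layer. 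I would combine this with the following purely matroidal fact, valid in any realization: since $\{i,i+n,j,j+n\}$ is a non-basis of $M_Q$ for all $i\ne j\le n$, the $n$ lines $V_i:={\rm aff}(\{{\bm r_i},{\bm r_{i+n}}\})$ are pairwise coplanar, so — coplanarity of all of them being ruled out by $\mathrm{rank}\{1,\dots,2n\}=4$ — they are either concurrent or mutually parallel, and their common point, resp. direction, is $f$-invariant, hence lies in $\Pi$, resp. is a direction in $\Pi$. The plan is then to show, using the parallel-pair structure together with the $f$-fixed layer centroids ${\bm w_0}=\tfrac1k\sum_j{\bm m_j}\in\ell_0$ and ${\bm w_1}=\tfrac1k\sum_j{\bm m_j'}$, that this common point (resp. direction) must in fact lie on (resp. be the direction of) the line ${\rm aff}(\{{\bm r_{2n+1}},{\bm r_{2n+2}}\})$; once that is established, $V_1$ and ${\rm aff}(\{{\bm r_{2n+1}},{\bm r_{2n+2}}\})$ are coplanar, i.e. ${\bm r_1},{\bm r_{1+n}},{\bm r_{2n+1}},{\bm r_{2n+2}}$ are coplanar, contradicting that $\{1,1+n,2n+1,2n+2\}$ is a basis of $M_Q$. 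The hard part is exactly this localisation of the common point inside $\Pi$: lacking the rotation/fixed-point theorem, it has to be pinned down by hand, exploiting that an order-$2$ rotation forces the realized layers to carry mutually parallel "diagonals" — a parallelogram-type rigidity that is invisible to the matroid and can only be turned into a contradiction with the help of the two suitably generic points ${\bm q_{2n+1}},{\bm q_{2n+2}}$.
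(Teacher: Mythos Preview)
The paper gives no separate proof of this Corollary at all: it is stated immediately after the proof of Theorem~3.1 and before Section~4, and is meant to be read as a direct restatement of that theorem. In particular, the rotational symmetry $\sigma$ in the Corollary is the same $\sigma$ as in Theorem~3.1, hence of order $m>2$; the condition $n\ge 3$ simply records the minimum size for which such a symmetry can occur. Your one-line reduction of the $m>2$ case to Theorem~3.1 is therefore exactly the paper's argument, and nothing more is required.

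Everything you do for the order-$2$ case goes strictly beyond what the paper proves or even claims. That said, your $m=2$ argument is not complete. The case $\dim\mathrm{Fix}(f)=1$ and the fixed-index case are handled correctly, but in the case $\dim\mathrm{Fix}(f)=2$ you only outline a plan: you show the common point (resp.\ common direction) of the vertical lines $V_i$ lies in the plane $\Pi=\mathrm{Fix}(f)$, and then assert that it should further lie on the line ${\rm aff}(\{{\bm r_{2n+1}},{\bm r_{2n+2}}\})$. This second localisation is the entire content of the subcase, and you do not prove it---you explicitly flag it as ``the hard part''. Since $\Pi$ is two-dimensional and the target line is one-dimensional, membership in $\Pi$ is very far from what you need; the layer centroids ${\bm w_0},{\bm w_1}$ and the parallel-diagonal structure you set up do not, by themselves, pin down a single line inside $\Pi$. (Note also a small slip: for an \emph{affine} involution with a $2$-dimensional fixed plane, the restriction $f|_{\Sigma_0}$ is an oblique reflection, so ``perpendicular to $\ell_0$'' should be ``parallel to the reflection direction''; your conclusion that the lines ${\rm aff}(\{{\bm r_{a_j}},{\bm r_{b_j}}\})$ are mutually parallel survives, but the word ``perpendicular'' does not.) If you want to push this case through you will also have to use further non-bases of $M_Q$ that are specific to an order-$2$ point reflection, e.g.\ the sets $\{i,j,\sigma(i)+n,\sigma(j)+n\}$, which are dependent in $Q$ because $p_{\sigma(i)}-p_{\sigma(j)}=-(p_i-p_j)$; these constraints are invisible in your current write-up.
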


\section{Fixed point properties for rotational symmetries of oriented matroids}
The construction of a gap between matroidal symmetries and geometric symmetries in the previous section is based on the fact that every non-trivial rotation in the $2$-dimensional Euclidean space
has a unique fixed point, but that matroids do not have the corresponding property.
A natural question is whether oriented matroids have the corresponding property or not.
In this section, we study the corresponding property for oriented matroids.

\subsection{Uniqueness property}
The following is an oriented-matroid analogue of the uniqueness property of a fixed point of a non-trivial rotational symmetry 
in the $2$-dimensional Euclidean space. 
\begin{prop}
\label{prop:uniqueness}
Let $E$ be a finite set and ${\cal M}=(E,\{ \chi, -\chi \} )$ be a loopless oriented matroid of rank $3$ on $E$ 
with a rotational symmetry $\sigma$ of order $a$.
Assume that $\sigma$ is a {\it non-trivial} rotational symmetry, i.e.,
there exists $x \in E$ such that 
$x$ and $\sigma(x)$ are not parallel.
Then,
if $\sigma (p) = p, \sigma (q) = q$, it holds that $\rank(\{ p,q\})=1$, i.e.,
$\chi(p,q,x)=0$ for all $x \in E$ (i.e., $p$ and $q$ are parallel or antiparallel). 
\end{prop}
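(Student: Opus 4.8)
The plan is to argue by contradiction: assume $\sigma(p)=p$ and $\sigma(q)=q$ but that $p,q$ are neither parallel nor antiparallel, equivalently $\mathrm{rank}(\{p,q\})=2$ (there is $z_0$ with $\chi(p,q,z_0)\neq0$). I will show that then $\mathrm{rank}(\{x,\sigma(x)\})\le1$ for every $x$, i.e.\ every $x$ is parallel or antiparallel to $\sigma(x)$, which contradicts the non-triviality of $\sigma$; hence $\mathrm{rank}(\{p,q\})\le1$, which is exactly the assertion $\chi(p,q,x)=0$ for all $x$.

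The engine of the argument is the following rank-$2$ fact, which I expect to be the main obstacle: if $\mathcal{N}$ is a rank-$2$ oriented matroid and $\tau$ is a symmetry of $\mathcal{N}$ that is rotational and fixes some non-loop element, or that fixes two elements spanning $\mathcal{N}$, then $\tau$ fixes every rank-$1$ flat of $\mathcal{N}$ (so that every $x$ is parallel or antiparallel to $\tau(x)$). To see this one uses that rank-$2$ oriented matroids are realizable, represents $\mathcal{N}$ by nonzero vectors in the plane, and notes that a rotational symmetry preserves the oriented cyclic order of these vectors, hence acts as a rotation of that cyclic order — and a nontrivial rotation of a finite cyclic order has no fixed point, while an orientation-reversing symmetry fixes at most one rank-$1$ flat; alternatively, one can localise at a fixed element, split $\mathcal{N}$ into acyclic pieces, each relabeling-equivalent to an alternating matroid $A_{2,m}$, and use that the rotational symmetry group of $A_{2,m}$ is trivial.

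Now the reduction. Since $\sigma(p)=p$, $\sigma$ induces a symmetry of the rank-$2$ contraction $\mathcal{M}/p$ (with chirotope $(i,j)\mapsto\chi(p,i,j)$), and it is rotational there because $\chi(p,\sigma(i),\sigma(j))=\chi(\sigma(p),\sigma(i),\sigma(j))=\chi(p,i,j)$. Because $\mathrm{rank}(\{p,q\})=2$, the fixed element $q$ is not a loop of $\mathcal{M}/p$, so the rank-$2$ fact applies: $\sigma$ fixes every rank-$1$ flat of $\mathcal{M}/p$, i.e.\ $\mathrm{rank}_{\mathcal{M}/p}(\{x,\sigma(x)\})\le1$, i.e.\ $\chi(p,x,\sigma(x))=0$, for every $x$. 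Symmetrically, $\chi(q,x,\sigma(x))=0$ for every $x$. Consequently, if some $x$ had $\mathrm{rank}(\{x,\sigma(x)\})=2$, then from $\chi(p,x,\sigma(x))=0$ and $\chi(q,x,\sigma(x))=0$ we would get $p,q\in\mathrm{Flat}(\{x,\sigma(x)\})$; as this is a rank-$2$ flat containing $\{p,q\}$ it must equal $F:=\mathrm{Flat}(\{p,q\})$, so $x,\sigma(x)\in F$.

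It remains to rule out $\mathrm{rank}(\{x,\sigma(x)\})=2$ for $x\in F$. The flat $F$ is $\sigma$-invariant, since a symmetry maps flats to flats and $\sigma(F)=\mathrm{Flat}(\{\sigma(p),\sigma(q)\})=F$; hence $\sigma$ restricts to a symmetry of the rank-$2$ oriented matroid $\mathcal{M}|_F$ that fixes $p$ and $q$, and $p,q$ span $\mathcal{M}|_F$ because $\mathrm{rank}(\{p,q\})=2$. By the rank-$2$ fact, $\sigma|_F$ fixes every rank-$1$ flat of $\mathcal{M}|_F$, so $\mathrm{rank}(\{x,\sigma(x)\})\le1$ in $\mathcal{M}|_F$, hence in $\mathcal{M}$, for every $x\in F$. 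Together with the previous paragraph this shows that no $x$ has $\mathrm{rank}(\{x,\sigma(x)\})=2$, i.e.\ $\chi(x,\sigma(x),z)=0$ for all $x,z$, contradicting non-triviality and completing the proof.
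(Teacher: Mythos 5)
Your overall strategy -- contracting at the fixed points to get rotational symmetries of the rank-$2$ oriented matroids ${\cal M}/p$ and ${\cal M}/q$, and then restricting to the flat $F={\rm Flat}(\{p,q\})$ -- is a genuinely different and cleaner route than the paper's direct manipulation of the chirotope exchange axiom, and the reduction steps themselves (the induced symmetry of ${\cal M}/p$ is rotational, $q$ is not a loop there, any $x$ with $rank(\{x,\sigma(x)\})=2$ is forced into $F$, and $F$ is $\sigma$-invariant) are correct. However, the final inference has a real gap. What you derive under the reductio is that $\chi(x,\sigma(x),z)=0$ for all $x,z$, i.e.\ every $x$ is parallel \emph{or antiparallel} to $\sigma(x)$; you then claim this contradicts non-triviality. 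It does not: non-triviality only supplies some $x_0$ that is \emph{not parallel} to $\sigma(x_0)$, which is perfectly compatible with $x_0$ being antiparallel to $\sigma(x_0)$. This is not a vacuous worry, because rotational symmetries of rank-$3$ oriented matroids really can send elements to antiparallel elements: take the oriented matroid of $\{\pm e_1,\pm e_2,\pm e_3\}\subseteq\mathbb{R}^3$ and the permutation induced by the rotation by $\pi$ about the $e_3$-axis (determinant $+1$, hence a combinatorial rotational symmetry), which maps $e_1$ to an antiparallel element. So ``parallel or antiparallel for every $x$'' does not by itself clash with the hypothesis; the contradiction must be extracted by using $p,q$ once more, which your write-up does not do. (The paper faces the same point and disposes of it explicitly for elements outside $F$.)

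The repair is short but must be made. For $x\notin F$, antiparallelism of $x$ and $\sigma(x)$ is impossible because $\chi(\sigma(x),p,q)=\chi(x,p,q)\neq 0$ (rotationality plus $\sigma(p)=p$, $\sigma(q)=q$), whereas antiparallel elements would force $\chi(\sigma(x),p,q)=-\chi(x,p,q)$. For $x\in F$ you need your rank-$2$ fact in the stronger form: a rotational symmetry of a rank-$2$ oriented matroid fixing a non-loop element maps every element to a \emph{parallel} element. Your ``split at the fixed element'' justification actually proves this stronger statement (on each open side of the fixed element $q$ the chirotope induces a total preorder preserved by the symmetry, whose tie classes are exactly the parallel classes; elements on the line of $q$ are handled directly), so it is a matter of stating and using it. Relatedly, the parenthetical ``an orientation-reversing symmetry fixes at most one rank-$1$ flat'' is false as stated -- a reflection symmetry of a rank-$2$ oriented matroid can fix two rank-$1$ flats (one on its axis, one an antiparallel pair on the perpendicular line) -- but the branch of your rank-$2$ fact that it was meant to support is salvaged trivially: a reflection symmetry $\tau$ can never fix two spanning \emph{elements}, since $\chi(\tau(p),\tau(q))=\chi(p,q)\neq 0$ is incompatible with sign reversal, so a symmetry of ${\cal M}|_F$ fixing $p$ and $q$ is automatically rotational and the first branch applies.
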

\begin{proof}
\\
Let us assume that ${\rm rank}(\{ p,q\})=2$.
We will see that $x$ and $\sigma (x)$ are parallel for each $x \in E$ and obtain a contradiction.
\begin{quote}
{\bf Lemma.}
Let $x \in E$ be such that $\chi (p,q,x) \neq 0$ (i.e., $\rank (\{ p,q,x\} ) = 3$).
Then, the elements $x$ and $\sigma (x)$ are parallel. 
\end{quote}
{\scshape Proof of lemma:}
\\
For any $s,t \in \mathbb{N}$, the following holds:
\begin{align*}
\begin{split}
\{ \chi(p,q,x) \chi(p,\sigma^s(x),\sigma^{s+t}(x)), -\chi(p,q,\sigma^s(x)) \chi(p,x,\sigma^{s+t}(x)), \chi(p,q,\sigma^{s+t}(x)) \chi(p,x,\sigma^s(x)) \} \\
\supseteq \{ +,-\} \text{ or } =\{ 0 \}
\end{split}
\end{align*}
by (B3) of the chirotope axioms.
Therefore, we have
\begin{align*}
\begin{split}
 \{ \chi(p,q,x) \chi(p,x,\sigma^{t}(x)), -\chi(p,q,x) \chi(p,x,\sigma^{s+t}(x)), \chi(p,q,x) \chi(p,x,\sigma^s(x)) \} \\
\supseteq \{ +,-\} \text{ or } =\{0\}.
\end{split}
\end{align*}
Since $\chi (p,q,x) \neq 0$, it holds that 
\begin{align*}
 \{ \chi(p,x,\sigma^{t}(x)), -\chi(p,x,\sigma^{s+t}(x)),\chi(p,x,\sigma^s(x)) \} 
\supseteq \{ +,-\} \text{ or } =\{0\}.
\end{align*}
Therefore, for any $s,t \in \mathbb{N}$ such that $\chi(p,x,\sigma^{t}(x)) = \chi(p,x,\sigma^s(x))$, we have 
\begin{align*}
 \chi(p,x,\sigma^s(x)) = \chi(p,x,\sigma^{s+t}(x)).
\end{align*}
By induction, we have
\begin{align*} 
\chi(p,x,\sigma(x)) = \chi(p,x,\sigma^l(x)) \text{ for all $l \geq 1$.}
\end{align*}
In particular, we have $\chi (p,x,\sigma^{a}(x)) = \chi (p,x,x) = 0$ and thus
\begin{align*} 
\chi (p,x,\sigma^l (x)) = 0 \text{ for all $l \geq 0$.}
\end{align*}
Similarly, it holds that
\begin{align*} 
\chi (q,x,\sigma^l (x)) = 0 \text{ for all $l \geq 0$.}
\end{align*}
By (B3) of the chirotope axioms,
\begin{align*}
\begin{split}
& \{
\chi (x,y,\sigma (x)) \chi (x,p,q),
- \chi (x,y,p) \chi (x,\sigma (x),q),
\chi (x,y,q) \chi (x,\sigma (x),p)
\} \\
&=
\{ 0,\chi (x,y,\sigma (x)) \chi (x,p,q) \}
\supseteq
\{ +,- \}
\text{ or }
= \{0\}
\end{split}
\end{align*}
for any $y \in E$.
Since $\chi(p,q,x) \neq 0$,
it holds that $\chi (x,y,\sigma (x)) = 0$ for any $y \in E$.
Therefore, the elements $x$ and $\sigma (x)$ are parallel or antiparallel.
In fact, $x$ and $\sigma (x)$ are parallel since $\chi (p,q,x) = \chi (p,q, \sigma (x)) (\neq 0)$.
This proves the lemma.
\\
\\
Next, let us consider $x \in E$ such that $\chi(p,q,x)=0$.
Assume that $x$ and $\sigma(x)$ are neither parallel nor antiparallel (i.e., $\rank(\{ x, \sigma (x) \})=2$).
Then we can take $y \in E$ such that $\chi(y,x,\sigma(x)) \neq 0$ using (I3) of the independent set axioms.
Note that $x,\sigma(x) \in {\rm span}_{\cal M}(\{p,q\})$.
Since $y \notin {\rm span}_{\cal M}(\{ p,q\})$, the elements $y$ and $\sigma (y)$ are parallel by the above lemma.
For any $s,t \in \mathbb{N}$, the following holds:
\begin{align*}
\begin{split}
\{ \chi(y,p,x) \chi(y,\sigma^s(x),\sigma^{s+t}(x)), -\chi(y,p,\sigma^s(x)) \chi(y,x,\sigma^{s+t}(x)), \chi(y,p,\sigma^{s+t}(x)) \chi(y,x,\sigma^s(x)) \} \\
\supseteq \{ +,-\} \text{ or } =\{ 0 \}
\end{split}
\end{align*}
by (B3) of the chirotope axioms.
Note that $\chi (y,p,x)=\chi (y,p,\sigma^s(x))= \chi (y,p,\sigma^{s+t}(x))$.
If $\chi(p,x,y) \neq 0$, then $\chi (y,x,\sigma (x))=0$ similarly to the above discussion, which is a contradiction.
Therefore, we have $\chi (p,x,y)=0$.
Since $p$ and $x$ are not parallel nor antiparallel, there exists $z \in E$ such that $\chi (p,x,z) \neq 0$.
However, it holds that
\begin{align*}
\begin{split}
\{ \chi(p,q,y) \chi(p,x,z), -\chi(p,q,x) \chi(p,y,z), \chi(p,q,z) \chi(p,y,x) \} \\
= \{ \chi(p,q,y) \chi(p,x,z) (\neq 0), 0 \} \supseteq \{ +,-\} \text{ or } =\{ 0 \}.
\end{split}
\end{align*}
This is a contradiction.
Therefore, $x$ and $\sigma (x)$ are parallel or antiparallel.
In fact, $x$ and $\sigma (x)$ are parallel since $\chi (y,p,x) = \chi (\sigma (y),p,\sigma (x)) (\neq 0)$ (recall that $y$ and $\sigma (y)$ are parallel).

As a conclusion, $\sigma$ is a trivial rotational symmetry, which is a contradiction.
Therefore, we have $\rank(\{ p,q \}) = 1$.
\end{proof}
\\
The above proof can easily be extended to the following theorem.
\begin{thm}
\label{thm:uniqueness_general}
Let $E$ be a finite set and ${\cal M}$ a loopless oriented matroid of rank $r$ on $E$ with a non-trivial rotational symmetry $\sigma$.
Let us set $\Fix(\sigma):=\{ e \in E \mid \sigma (e) =e \}$.
Then, we have $\rank({\cal M}|_{\Fix(\sigma)}) \leq r-2$.
\end{thm}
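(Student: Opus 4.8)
The plan is to reduce Theorem~\ref{thm:uniqueness_general} to Proposition~\ref{prop:uniqueness} by a rank-induction / contraction argument, so that the whole content becomes the rank-$3$ case already proved. Suppose, for contradiction, that $rank({\cal M}|_{Fix(\sigma)}) \geq r-1$. Pick elements $p_1,\dots,p_{r-1} \in Fix(\sigma)$ whose closure $F:=Flat(\{p_1,\dots,p_{r-1}\})$ has rank $r-1$; this is a hyperplane of ${\cal M}$, and since every $p_i$ is fixed by $\sigma$ and $\sigma$ is an automorphism, $\sigma(F)=F$. The idea is then to ``slice'' ${\cal M}$ down to rank $3$ along most of these fixed elements while keeping a witness of non-triviality, and apply the rank-$3$ statement.

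Concretely, I would argue by downward induction on $r$. For the inductive step, choose one fixed element, say $p_1 \in Fix(\sigma)$, which is not a loop (it cannot be, as ${\cal M}$ is loopless) and form the contraction ${\cal M}/p_1$, an oriented matroid of rank $r-1$ on ground set $\{1,\dots,n\}\setminus\{p_1\}$ (or on the parallel classes thereof). Since $\sigma$ fixes $p_1$, it descends to a symmetry $\bar\sigma$ of ${\cal M}/p_1$, and it remains a rotational symmetry because contraction does not change the sign pattern relevant to condition (O1) — more precisely, chirotope values of ${\cal M}/p_1$ are given by $\bar\chi(i_2,\dots,i_r)=\chi(p_1,i_2,\dots,i_r)$, and $\sigma$ preserves these. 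The remaining fixed elements $p_2,\dots,p_{r-1}$ stay fixed in ${\cal M}/p_1$, so $rank\big(({\cal M}/p_1)|_{Fix(\bar\sigma)}\big) \geq rank({\cal M}|_{Fix(\sigma)}) - 1 \geq r-2 = (r-1)-1$. By the inductive hypothesis this forces $\bar\sigma$ to be a trivial rotational symmetry of ${\cal M}/p_1$; the base case $r=3$ is exactly Proposition~\ref{prop:uniqueness}, which says the two fixed elements $p$, $q$ are parallel or antiparallel, i.e.\ the rank-$3$ conclusion $rank({\cal M}|_{Fix(\sigma)})\le 1$ holds, contradicting our assumption.

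The step I expect to be the main obstacle is showing that non-triviality of $\sigma$ survives the contraction — that $\bar\sigma$ is still \emph{non-trivial} in the sense required, i.e.\ there are $x,\bar\sigma(x)$ that are not parallel in ${\cal M}/p_1$. It is conceivable that the only witnesses $x$ with $x,\sigma(x)$ non-parallel in ${\cal M}$ become parallel after contracting $p_1$ (this happens precisely when $x,\sigma(x),p_1$ were collinear, i.e.\ $rank(Flat(\{x,\sigma(x),p_1\}))=2$). To handle this one should contract not by an \emph{arbitrary} fixed element but choose $p_1$ — and the witness $x$ — compatibly: if $x,\sigma(x)$ span a rank-$2$ flat through $p_1$, then in $Flat(\{p_1,x\})$ (rank $2$) the element $\sigma(x)$ lies, so $\sigma$ acts on this rank-$2$ flat fixing $p_1$ and moving $x$, and one gets a genuine non-trivial rotation already inside a rank-$3$ minor; alternatively, since $Fix(\sigma)$ has rank $\ge r-1\ge 3$ while the ``bad'' flat $Flat(\{x,\sigma(x),p_1\})$ has rank $2$, one can pick another fixed element $p_1'$ outside that flat and contract by $p_1'$ instead, which keeps $x,\sigma(x)$ non-parallel. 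Making this selection argument airtight — essentially a dimension count showing a good choice of fixed element to contract always exists — is the one place where care is needed; everything else is bookkeeping about how contraction interacts with symmetries and flats. Finally I would record that the alternative ``antiparallel'' possibility in the rank-$3$ base case is excluded because $\sigma$ is assumed rotational, exactly as in the last line of the proof of Proposition~\ref{prop:uniqueness}.
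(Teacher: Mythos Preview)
Your contraction-and-induction argument is correct and is essentially the paper's proof made explicit: the paper gives no separate argument, saying only that the rank-$3$ proof ``can easily be extended,'' and contracting successively by fixed elements $p_1,\dots,p_{r-3}$ to reach a rank-$3$ minor --- which is what your induction does --- is precisely that extension, since the chirotope of the contraction is $\chi(p_1,\dots,p_{r-3},\cdot,\cdot,\cdot)$ and applying Proposition~\ref{prop:uniqueness} there reproduces the same Grassmann--Pl\"ucker computations. You correctly isolate the only substantive point, the survival of non-triviality under contraction, and your dimension count (for $r\ge 4$ one has $rank(Fix(\sigma))\ge r-1\ge 3 > 2 = rank(Flat(\{x,\sigma(x)\}))$, so a fixed element outside that flat is always available) settles it; the one small cleanup to make is to delete the loops of ${\cal M}/p_1$ (the elements parallel or antiparallel to $p_1$, a $\sigma$-invariant set disjoint from $\{p_2,\dots,p_{r-1},x,\sigma(x)\}$ by your choice of $p_1$) before invoking the inductive hypothesis, so that looplessness is maintained.
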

\begin{proof}
Assume that $\rank({\cal M}|_{\Fix(\sigma)}) > r-2$. Then, we can take
$p_1,\dots,p_{r-1} \in \Fix(\sigma)$ and $x \in E$ such that
$\chi(p_1,\dots,p_{r-1},x) \neq 0$.
The same argument as the proof of Theorem \ref{prop:uniqueness} yields a contradiction.
\end{proof}
\\
One of useful applications of the above theorem is as follows.
\begin{cor}
\label{cor:uniqueness}
Let $E$ be a finite set and ${\cal M}$ be an oriented matroid of rank $r$ on $E$ without loops and parallel elements. 
For $\sigma, \tau \in R({\cal M})$, 
if $\sigma |_X = \tau |_X$ for $X \subseteq E$ with $\rank({\cal M}|_X) \geq r-1$,
then $\sigma  = \tau$.
The same holds when $\sigma, \tau \in G({\cal M}) \setminus R({\cal M})$.
\end{cor}
\begin{proof}
Assume that $\sigma |_X = \tau |_X$ but that $\sigma \neq \tau$.
Then $\sigma \tau^{-1}$ is a non-trivial rotational symmetry of ${\cal M}$.
However, $\sigma \tau^{-1}$ fixes $X$ pointwisely.
This is a contradiction.
\end{proof}

\begin{remark}
It might also be natural to ask if the following uniqueness property holds.
\begin{quote}
Let ${\cal M}$ be an oriented matroid of rank $3$ on a ground set $E$ with a non-trivial rotational symmetry $\sigma$.
If single element extensions ${\widehat {\cal M}}'$ and ${\widehat {\cal M}}''$ of ${\cal M}$ by a new element $p$
have rotational symmetries $\sigma'$ and $\sigma''$ respectively such that
$\sigma'|_E =  \sigma''|_E = \sigma$ and $\sigma' (p) = p$ and $\sigma'' (p) = p$,
then we have ${\widehat {\cal M}}' = {\widehat {\cal M}}''$.
\end{quote}
It does not always hold.
See Figure \ref{fig:non_uniqueness}.
The oriented matroid on $6$ elements can be extended by one element to two different oriented matroids
where the new elements are fixed points of rotational symmetries.
\begin{figure}[ht]
\begin{center}
\includegraphics[scale=0.40,clip]{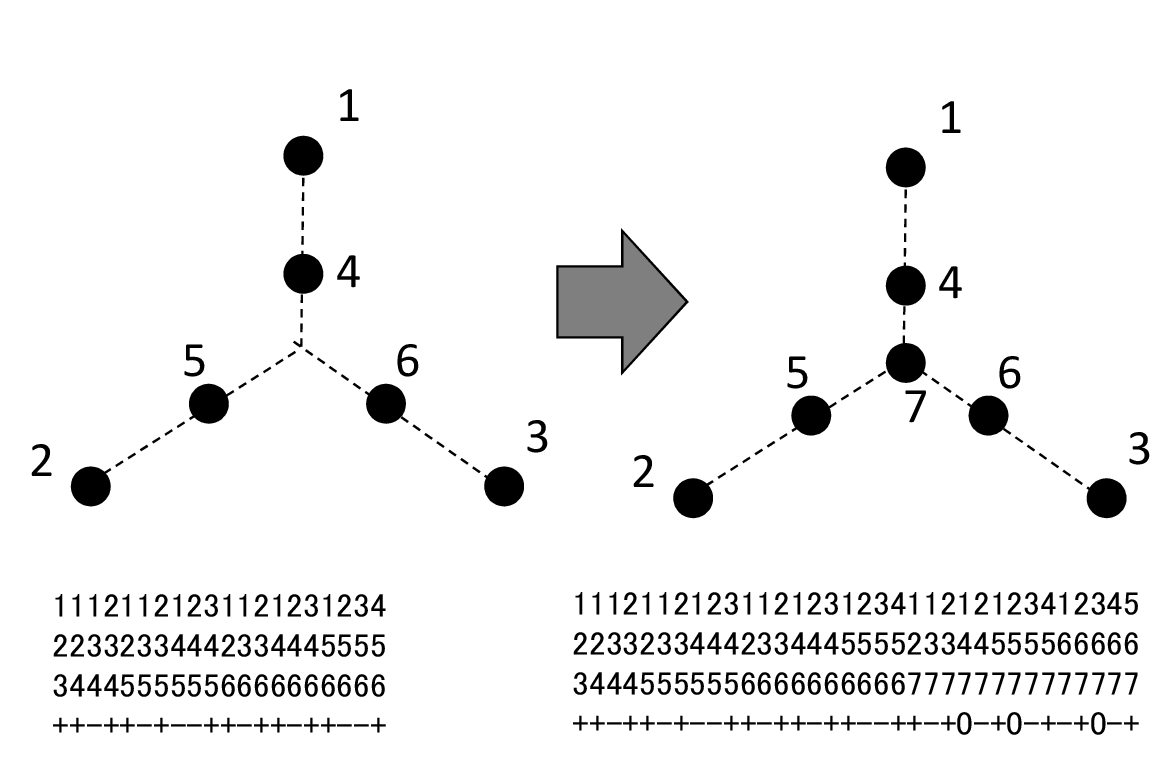}
\includegraphics[scale=0.40,clip]{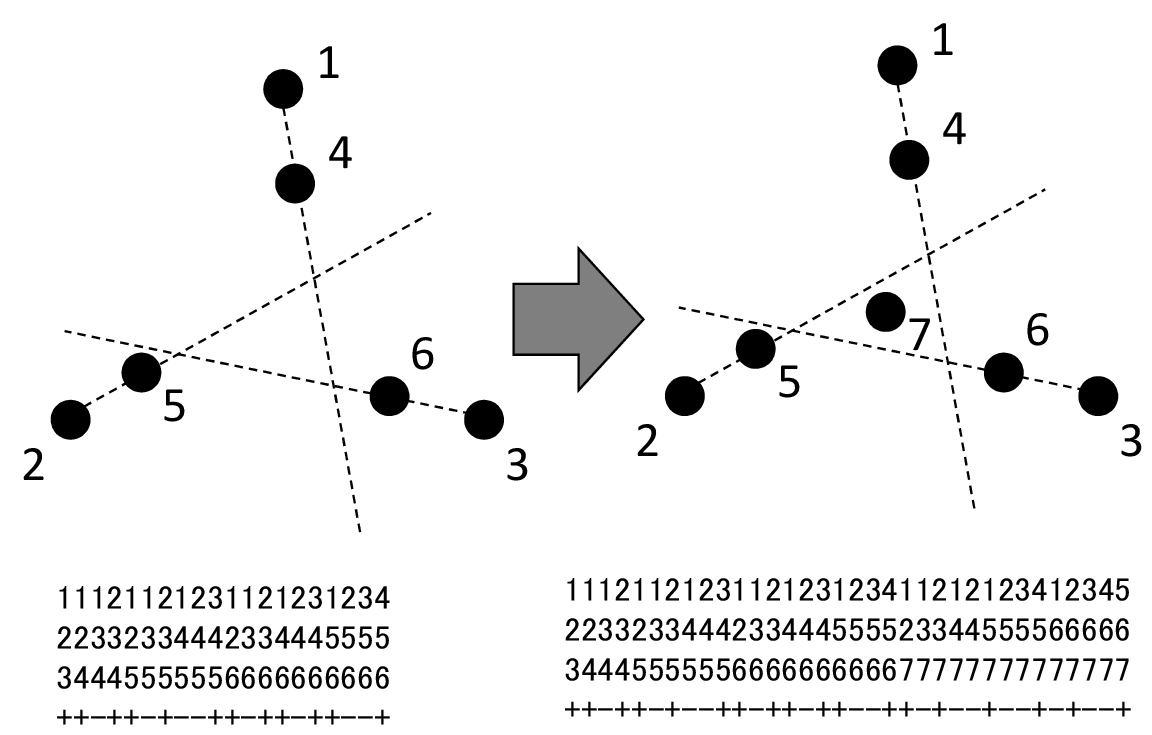}
\end{center}
\caption{Two oriented matroids  whose deletions by the fixed points $7$ are the same}
\label{fig:non_uniqueness}
\end{figure}
\\
\end{remark}

The ``uniqueness property'' for reflection symmetries is formulated as follows.
\begin{prop}
\label{cor:uniqueness2}
For a reflection symmetry $\tau$ of a rank $r$ oriented matroid ${\cal M}=(E,\{ \chi, -\chi \})$, we have
\[ {\rm rank}(\Fix(\tau)) \leq r-1. \]
Therefore, if $\sigma|_X = \tau|_X$ for $\sigma, \tau \in G({\cal M})$ and $X \subseteq E$ such that ${\rm rank}(X)=r$, then $\sigma = \tau$.
\end{prop}
\begin{proof}
For any $q_1,\dots,q_r \in \Fix(\tau)$,
we have $-\chi (q_1,\dots,q_r) = \chi (\tau (q_1),\dots,\tau(q_r))=\chi (q_1,\dots,q_r)$ and thus $\chi (q_1,\dots,q_r)=0$.  
\end{proof}
\\
We remark that $\sigma|_X = \tau|_X$ can happen for $\sigma \in R({\cal M}), \tau \in G({\cal M}) \setminus R({\cal M})$ (and thus $\sigma \neq \tau$) and $X \subseteq E$ with $\rank (X)=r-1$.
See Figure \ref{fig:order2}.
The rotational symmetry $\sigma$ and the reflection symmetry $\tau$ has the same group action on $\{ y, \sigma (y) \}$.
\begin{figure}[h]
\begin{center}
\includegraphics[bb = 37 392 558 770, scale=0.30,clip]{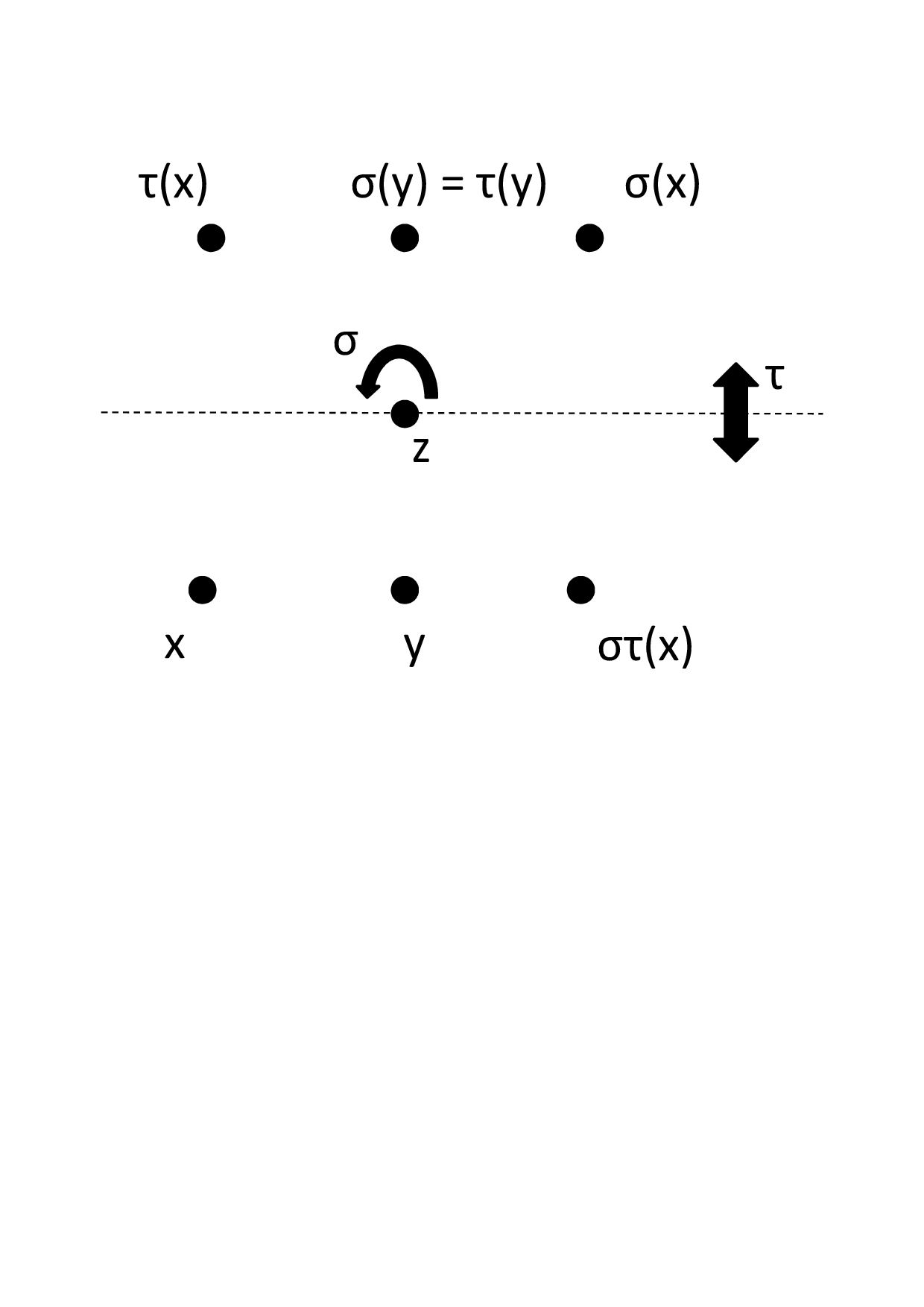}
\end{center}
\caption{Two different symmetries having the same group action on a rank $2$ subset}
\label{fig:order2}
\end{figure}

\subsection{Existence property}
Now we define existence property of fixed points of oriented-matroid symmetries.

\begin{defn}
\label{def:existence}
For a simple acyclic oriented matroid ${\cal M}$ on a ground set $E$, we say that
a subgroup $G_f({\cal M}) \subseteq G({\cal M})$ has {\it fixed-point-admitting (FPA) property}
if
there exists a single element extension ${\cal M}\cup p$ of ${\cal M}$ 
with a new point $p$
that is invariant under the permutation
${\widehat \sigma}$ on $E \cup \{ p \}$ with ${\widehat \sigma}|_E= \sigma$ and ${\widehat \sigma} (p) = p$ for every $\sigma \in G_f({\cal M})$.
\end{defn}

\begin{remark}
The oriented matroid ${\cal M} \cup p$ can be chosen to be acyclic.
If ${\cal M} \cup p$ is not acyclic, the reorientation $_{-\{ p \}}({\cal M} \cup p)$ is acyclic.
\end{remark}

\begin{remark}
For a simple acyclic oriented matroid ${\cal M}$ of  rank $r$ with a non-trivial FPA rotational symmetry  group $R_f({\cal M})$,
we can take a fixed point $q$ and can construct a rank $r-1$ oriented matroid
$({\cal M} \cup q)/q$, which is also invariant under $R_f({\cal M})$.
Let $\sigma \in R_f({\cal M})$ be a FPA rotational symmetry of ${\cal M}$ and ${\widehat \chi}$ a chirotope of ${\cal M} \cup p$.
Then, the map $\chi:E^{r-1} \rightarrow \{ +,-,0\}$ defined by
\[ \chi (i_1,\dots,i_{r-1}):= {\widehat \chi}(q,i_1,\dots,i_{r-1}) \text{ for $i_1,\dots,i_{r-1} \in E$} \]
is a chirotope of $({\cal M}\cup q)/q$.
Then, it holds that
\[ \sigma \cdot \chi (i_1,\dots,i_{r-1}) = \chi(\sigma(i_1),\dots,\sigma(i_{r-1}))=
{\widehat \chi}(q,\sigma(i_1),\dots,\sigma(i_{r-1}))={\widehat \chi}(q,i_1,\dots,i_{r-1}) = \chi(i_1,\dots,i_{r-1})\]
for all $i_1,\dots,i_{r-1} \in E$. Therefore, $\sigma$ is a rotational symmetry of $({\cal M}\cup q)/q$.
In this way, we can decrease the rank of an oriented matroid by one preserving rotational symmetries
(acyclicity and simpleness are not always preserved).
A similar discussion applies to the case of FPA subgroups of full symmetry groups.
\end{remark}

\begin{prop}
\label{prop:FP_inside}
Let ${\cal M}$ be an acyclic oriented matroid on a ground set $E$ with a FPA symmetry group $G_f({\cal M})$,
$p$ a fixed point of $G_f({\cal M})$,
and $O \subseteq E$ a  $G_f({\cal M})$-orbit.
If ${\cal M}|_{O \cup \{ p \}}$ is acyclic and $\rank(O \cup \{ p \} ) = \rank(O) \geq 2$, 
then $p$ must be {\it inside} $O$, 
i.e., $X(p)=+$ for any cocircuit $X$ of ${\cal M}|_{O \cup \{ p \}}$ such that $X|_O \geq 0$.
Therefore, the contraction ${\cal M}|_{O \cup \{ p \}}/p$ is not acyclic (and thus totally cyclic by transitivity).
\end{prop}
\begin{proof}
Let ${\cal N} := ({\cal M} \cup p)|_{O \cup \{ p \}}$.
First, suppose that there exists a cocircuit $X$ of ${\cal N}$
with $X(p)=-$ and $X|_O \geq 0$.
Then, the fixed point $p$ is an extreme point of ${\cal N}$ 
(since ``in an acyclic oriented matroid any non-empty half-space contains an extreme point.'' See \cite[Proposition 1.2]{LV80}).
Let us take an element of $O$ arbitrarily. Then it is also an extreme point of ${\cal N}$.
Otherwise, none of the elements of $O$ is an extreme point of ${\cal N}$ by transitivity.
Since ${\cal N}$ is acyclic, there must be at least $\rank_{\cal N}(O \cup \{ p \})$ extreme points (\cite[Theorem 1.3]{LV80}), which is a contradiction.
Therefore, all elements of $O \cup \{ p \}$ are extreme points of ${\cal N}$.
This means that ${\cal N}$ is a matroid polytope and thus that we have $Y(p) \geq 0$ for every cocircuit $Y$ with $Y|_O \geq 0$, which is a contradiction. 
Therefore, it holds that $X(p) \geq 0$ for every cocircuit $X$ of ${\cal N}$ such that $X|_O \geq 0$.

Next, suppose that there exists a cocircuit $X^*(p)=0$ and $X^*|_O \geq 0$.
If $X^*|_O = 0$, then $\rank_{\cal N}(O \cup \{ p \}) = \rank_{\cal N}(O)-1$, a contradiction.
Thus we can take $z \in O$ such that $X^*(z) = +$.
By transitivity, it holds that $\bigcap_{\tau \in G_f({\cal M})}\tau((X^*)^0) = \{ p \}$.
Therefore, the fixed point $p$ is an extreme point of ${\cal N}$ and thus ${\cal N}$ is a matroid polytope.
Let $F := (X^*)^0$ and consider ${\cal N}' = {\cal N}|_F$.
Note that $\rank_{{\cal N}'} (F) = \rank_{{\cal N}'}  (F \setminus \{ p \})$.
Since every non-negative covector is a composition of some non-negative cocircuits (every face is an intersection of some facets) for a matroid polytope (for a proof, see \cite[p.3]{LV80}),
 it holds that $Y(p) \geq 0$ for every cocircuit $Y$ of ${\cal N}$' such that $Y|_{F \setminus \{ p \}} \geq 0$.
We can continue this discussion until we obtain a matroid polytope ${\widetilde {\cal N}}$ on a ground set ${\widetilde E}$ such that
${\widetilde Y}(p)=+$ for every cocircuit ${\widetilde Y}|_{{\widetilde E} \setminus \{ p \}} \geq 0$.
If $\rank({\widetilde {\cal N}}) > 1$, it contradicts to the observation that $p$ is an extreme point.
If $\rank({\widetilde {\cal N}}) = 1$, it contradicts to the assumption that ${\cal M}$ is simple.

As a conclusion, we have $X(p)=+$ for every cocircuit $X$ of ${\cal N}$ such that $X|_O \geq 0$.
Therefore, there is a vector $C$ of ${\cal N}$ such that $C^+ = O$ and $C^- = \{ p \}$.
(The vector $C$ is orthogonal to any cocircuit $V$ of ${\cal N}$ such that $(V|_O)^+ \neq \emptyset$ and $(V|_O)^- \neq \emptyset$.
$C$ is also orthogonal to any cocircuit $W$ of ${\cal N}$ such that $W|_O \geq 0$ since we have  $W(p) = +$.) 
This implies the existence of the positive vector of ${\cal N}/p$.
\end{proof}

In this paper, we investigate structure of FPA rotational symmetry groups.
Note that the geometric rotational symmetry group of any point configuration has FPA property.
We conjecture that the symmetry group of every simple matroid polytope also has FPA property.

\begin{conj}
For every simple matroid polytope ${\cal M}$,
the symmetry group $G({\cal M})$ has FPA property. 
\end{conj}

\begin{remark} 
Let us assume that the conjecture is true and
consider a simple non-acyclic oriented matroid ${\cal M}$ of rank $r$ on a ground set $E$
where $R({\cal M})$ acts  transitively on $E$.
Then, the oriented matroid ${\cal M}$ is totally cyclic and thus the dual oriented matroid ${\cal M}^*$ is acyclic.
Since $R({\cal M^*}) (=R({\cal M}))$ acts transitively on $E$, the dual ${\cal M}^*$ is a matroid polytope of rank $|E|-r$.
Then, let us take a fixed point $q$ and consider a single element extension ${\cal M}^* \cup q$.
Note that we have $R({\cal M}^*)=R(({\cal M}^* \cup q)/q)$
and that the contraction $({\cal M}^*\cup q)/q$ is cyclic and thus totally cyclic by transitivity.
This leads to that the dual oriented matroid ${\cal N}:=(({\cal M}^* \cup q)/q)^*$ is acyclic.
${\cal N}$ is a rank $r+1$ matroid polytope by transitivity.
The same applies to $G({\cal M})$.
\end{remark}
The above remark shows that classification of (rotational) symmetry groups of non-acyclic oriented matroids of rank $r$
is closely related to that of acyclic oriented matroids of rank $r+1$ under the conjecture.

\begin{remark}
The assumption that ${\cal M}$ is acyclic is necessary.
Consider the case that ${\cal M} = {_{-\{ 2,4\}}A_{3,4}}$. 
Then ${\cal M}$ has rotational symmetries $\sigma = \begin{pmatrix} 1 & 2 & 3 & 4 \\ 2 & 3 & 1 & 4 \end{pmatrix}$ and
$\tau = \begin{pmatrix} 1 & 2 & 3 & 4 \\ 1 & 3 & 4 & 2 \end{pmatrix}$.
Note that $\Fix(\sigma) = \{ 4\}$ and that $\Fix(\tau)= \{ 1 \}$, and that
any (simple) single element extension of ${\cal M}$ does not have other fixed points of $\sigma$ and $\tau$ by Theorem \ref{prop:uniqueness}.
\end{remark}

\section{Symmetry groups of simple oriented matroids of rank $2$}
In this section, we classify rotational and full symmetry groups of simple oriented matroids of rank $2$.
The strategy, which will also be used later, is as follows.
For a simple oriented matroid ${\cal M}$ of rank $2$, let $X$ be a rank $2$ $G({\cal M})$-orbit.
Since the group structure of $G({\cal M})$ is determined by relations among each element of $G({\cal M})$,
the group structure can be understood by the action of $G({\cal M})$ on $X$ by Corollaries \ref{cor:uniqueness} and \ref{cor:uniqueness2}
 (for example, $\sigma|_X \tau|_X = \tau|_X \sigma|_X$ leads to $\sigma \tau = \tau \sigma$ for $\sigma, \tau \in G({\cal M})$).
Therefore, our first goal is to understand structure of  $G({\cal M})$-orbits.
Once we know orbit structure of their symmetry groups, then symmetry groups of oriented matroids are ones of their subgroups.
The same approach also works for $R({\cal M})$.

\begin{prop}
\label{prop:classification_rank2_rotation}
Let ${\cal M}=(E, \{ \chi, -\chi \})$ be a simple oriented matroid of rank $2$.
Then, $R({\cal M})$ is isomorphic to the cyclic group $\mathbb{Z}_{2p-1}$ for some $p \in \mathbb{N}$.
An $R({\cal M})$-orbit is isomorphic to $_{-\{ 2,4,\dots,2p-2\}}A_{2,2p-1}$ if $p \geq 2$ and to $A_{1,1}$ if $p=1$.
\end{prop}
\begin{proof}
If $R({\cal M}) \simeq \{ \id \}$, the proposition is clearly true and thus we assume $|R({\cal M})| \geq 2$.
Without loss of generality, we consider the case where $R({\cal M})$ acts transitively on $E$.
First, note that $_{-A}{\cal M} \simeq A_{2,|E|}$ for some $A \subseteq E$ and that 
the inseparability graph $IG({\cal M})$ is the $|E|$-cycle (see Theorem \ref{thm:inseparability}).
Therefore, $R({\cal M})$ is a subgroup of the dihedral group $D_{2|E|}$.
By what is known as Cavior's theorem, any subgroup of a dihedral group is a cyclic group or a dihedral group and this leads to that 
$R({\cal M})$ must contain a subgroup that is isomorphic to the cyclic group $\mathbb{Z}_{|E|}$ or $D_{|E|}$ (only when $|E|$ is even).

First, we assume that $R({\cal M})$ contains a subgroup that is isomorphic to $\mathbb{Z}_{|E|}$, i.e.,  $R({\cal M}) \simeq \mathbb{Z}_{|E|}$ or  $R({\cal M}) \simeq D_{2|E|}$.
Take $x \in E$ and let $\sigma \in R({\cal M})$ be the symmetry that corresponds to the smallest-angle rotation of $IG({\cal M})$. 
Then, the elements $x,\sigma (x),\dots,\sigma^{|E|-1}(x)$ form an alternating matroid in this order, i.e.,
$_{-A}\chi(\sigma^i(x),\sigma^j(x)) = +$ for all $i,j \in \mathbb{Z}$ with $0 \leq i < j < |E|$.
Here, note that $|E|$ must be odd. Otherwise, it holds that $\chi (x,\sigma^{\frac{|E|}{2}} (x)) = \sigma^{\frac{|E|}{2}} \cdot \chi (x,\sigma^{\frac{|E|}{2}} (x)) =  \chi (\sigma^{\frac{|E|}{2}} (x),x)$, which is a contradiction.

We have
\begin{align*}
(-1)^{|A \cap \{ \sigma^i(x),\sigma^j (x)\}|} = \chi (\sigma^i(x),\sigma^j (x))= \chi (x,\sigma^{j-i} (x)) =  (-1)^{|A \cap \{ x,\sigma^{j-i} (x)\}|}
\end{align*}
for any $i,j \in \mathbb{Z}$ with $0 \leq i < j < |E|$.
It follows that $|A \cap \{ \sigma^i(x),\sigma^j(x) \}|=|A \cap \{ x,\sigma^{j-i}(x) \}|$ for any $i,j \in \mathbb{Z}$ with $0 \leq i < j < |E|$.
Therefore, if $x,\sigma (x) \in A$, then we have $2=|A \cap \{ x, \sigma (x) \}| = |A \cap \{ \sigma (x), \sigma^2 (x) \}|$ and thus
$\sigma^2(x) \in A$. Continuing this discussion, we obtain $A=E$ and thus $+=\chi (x,\sigma (x)) = \chi (\sigma^{|E|-1}(x),x)=-$, which is a contradiction.
A contradiction is similarly obtained if $x,\sigma (x) \notin A$.
If $x \notin A$ and $\sigma (x) \in A$, we have $1 =|A \cap \{ x, \sigma (x) \}| = |A \cap \{ \sigma (x), \sigma^2 (x) \}|$
and thus $\sigma^2(x) \in A$.  Continuing this discussion, we have $A=\{ \sigma^{2k+1}(x) \mid k \in \mathbb{Z}, 0 \leq 2k+1 < |E| \}$.
If $x \in A$ and $\sigma (x) \notin A$, then the same argument leads to $A=\{ \sigma^{2k}(x) \mid k \in \mathbb{Z}, 0 \leq 2k < |E| \}$.
Therefore, $R({\cal M})$ is isomorphic to $\mathbb{Z}_{|E|}$.

If $R({\cal M}) \simeq D_{|E|}$, there exist $\sigma, \tau \in R({\cal M})$  such that 
$\sigma$ and $\tau$ correspond to a rotation and a reflection of $IG({\cal M})$ respectively and
$x$,$\tau \sigma^{-1}(x)$,$\sigma(x)$,$\tau \sigma^{-2}(x)$,$\sigma^2(x)$,$\dots$,$\tau (x)$ form a cycle  ($=IG({\cal M})$) in this order.
Take $A \subseteq E$ such that  $_{-A}{\cal M} \simeq A_{2,|E|}$.
From the above discussion, we have $|E|=4q-2$ for some $q \in \mathbb{N}$, and
$A \cap \{ \sigma^k (x) \mid k \in \mathbb{Z}, \ 0 \leq k < 2q-1 \} = \{ \sigma^{2k} (x) \mid k \in \mathbb{Z}, \ 0 \leq 2k < 2q-1 \}$ or 
$A \cap \{ \sigma^k (x) \mid k \in \mathbb{Z}, \ 0 \leq k < 2q-1 \} = \{ \sigma^{2k+1} (x) \mid k \in \mathbb{Z}, \ 0 < 2k+1 \leq 2q-1 \}$.
Let us  assume without loss of generality that  
$A \cap \{ \sigma^k (x) \mid k \in \mathbb{Z}, \ 0 \leq k < 2q-1 \} = \{ \sigma^{2k} (x) \mid k \in \mathbb{Z}, \ 0 \leq 2k < 2q-1 \}$.
If $\tau \sigma^{-1}(x) \in A$, we have  
$A \cap \{ \tau\sigma^{-k} (x) \mid k \in \mathbb{Z}, \ 0 \leq k < 2q-1 \} = \{ \tau\sigma^{-2k-1} (x) \mid k \in \mathbb{Z}, \ 0 < 2k+1 \leq 2q-1 \}$.
Then, $+ = \chi (x,\tau \sigma^{-1} (x)) = \tau \cdot \chi (x,\tau \sigma^{-1} (x)) = \chi (\tau (x),\sigma^{2q-2} (x)) = -$, which is a contradiction.
If $\tau \sigma^{-1}(x) \notin A$, then we have 
$A \cap \{ \tau\sigma^{-k} (x) \mid k \in \mathbb{Z}, \ 0 \leq k < 2q-1 \} = \{ \tau\sigma^{-2k} (x) \mid k \in \mathbb{Z}, \ 0 \leq 2k < 2q-1 \}$.
Then, it follows that $- = \chi (x,\tau \sigma^{-1} (x)) = \tau \cdot \chi (x,\tau \sigma^{-1} (x)) = \chi (\tau (x),\sigma^{2q-2} (x)) = +$, which is a contradiction.
Therefore, we have $R({\cal M}) \not\simeq D_{|E|}$.
\end{proof}
\\
Since $R({_{-\{ 2,4,\dots,2p-2\}}A_{2,2p-1}}) \simeq \mathbb{Z}_{2p-1}$ ($p \geq 2$) and $R(A_{2,3}) \simeq \{ \id \}$,
the cyclic group $\mathbb{Z}_{2p-1}$,  for each $p \in \mathbb{N}$, is indeed the rotational symmetry group of a simple oriented matroid of rank $2$.

\begin{prop}
\label{prop:classification_rank2_full}
Let ${\cal M}=(E, \{ \chi, -\chi \})$ be a simple oriented matroid of rank $2$.
\begin{itemize}
\item The order of $\tau \in G({\cal M}) \setminus R({\cal M})$ must be $2$.
\item $G({\cal M})$ is a dihedral group.
A $G({\cal M})$-orbit is isomorphic to $A_{1,1}$ or $A_{2,2}$, or  $_{-\{ 2,4,\dots,2p-2\}}A_{2,2p-1}$ for some $p \geq 2$.
\end{itemize}
\end{prop}
\begin{proof}
If $|G({\cal M})| \leq 2$, i.e., $G({\cal M}) \simeq \{ \id \}$ or $G({\cal M}) \simeq \mathbb{Z}_2 (\simeq D_2)$, the proposition is clearly true and thus we assume that $|G({\cal M})| \geq 3$.
Without loss of generality, we consider the case where $G({\cal M})$ acts transitively on $E$.
The same discussion as the proof of Proposition \ref{prop:classification_rank2_rotation} leads to that $G({\cal M})$ is isomorphic to one of  $\mathbb{Z}_{|E|}$, $D_{2|E|}$ and $D_{|E|}$ (only when $|E|$ is even).
Let us first consider the case $G({\cal M}) \simeq \mathbb{Z}_{|E|}$ or $G({\cal M}) \simeq D_{2|E|}$.
Let $\sigma \in G({\cal M})$ be the symmetry that corresponds to the smallest-angle rotation of $IG({\cal M})$.
If $\sigma \in R({\cal M})$, then we have ${\cal M} \simeq {_{-\{ 2,4,\dots,2p-2\}}A_{2,2p-1}}$ for some $p \geq 2$ by Proposition \ref{prop:classification_rank2_rotation}.
Therefore, we have $G({\cal M}) \simeq D_{4p-2}$.
If $\sigma \in G({\cal M}) \setminus R({\cal M})$, the square $\sigma^2$ is a rotational symmetry and thus $|E|=4q-2$ for some $q \geq 2$.
We have $_{-A}{\cal M} \simeq A_{2,4q-2}$ for some $A \subseteq E$ and $_{-A}\chi (\sigma^i(x), \sigma^j (x)) = +$ for all $i,j \in \mathbb{Z}$ with $0 \leq i < j < 4q-2$.
Suppose now that $x, \sigma (x) \in A$. Then, it holds that $- = -\chi(x,\sigma (x)) = \sigma \cdot \chi(x,\sigma (x)) = \chi(\sigma(x),\sigma^2 (x)) = (-1)^{|A \cap \{ \sigma (x), \sigma^2(x) \}|}$
and thus $\sigma^2 (x) \notin A$. Continuing this discussion, we have 
$A=\{ \sigma^{4k}(x) \mid k \in \mathbb{Z}, 0 \leq k < q \} \cup \{ \sigma^{4k+1}(x) \mid k \in \mathbb{Z}, 0 \leq k < q \}$.
By a similar discussion, $A$ must be one of the following types:
\begin{itemize}
\item[(a)] $A=\{ \sigma^{4k}(x) \mid k \in \mathbb{Z}, 0 \leq k < q \} \cup \{ \sigma^{4k+1}(x) \mid k \in \mathbb{Z}, 0 \leq k < q \}$,
\item[(b)] $A=\{ \sigma^{4k+2}(x) \mid k \in \mathbb{Z}, 0 \leq k < q \} \cup \{ \sigma^{4k+3}(x) \mid k \in \mathbb{Z}, 0 \leq k < q-1 \}$,
\item[(c)] $A=\{ \sigma^{4k}(x) \mid k \in \mathbb{Z}, 0 \leq k < q \} \cup \{ \sigma^{4k+3}(x) \mid k \in \mathbb{Z}, 0 \leq k < q-1 \}$,
\item[(d)] $A=\{ \sigma^{4k+1}(x) \mid k \in \mathbb{Z}, 0 \leq k < q \} \cup \{ \sigma^{4k+2}(x) \mid k \in \mathbb{Z}, 0 \leq k < q \}$.
\end{itemize}
If $A$ is of Type (a), we have  $-=\chi(x,\sigma^2 (x)) =  - \sigma \cdot \chi(x,\sigma^2 (x)) = - \chi(\sigma (x),\sigma^3 (x))= +$, which is a contradiction.
Types (b), (c) and (d) are also impossible.
Therefore, the case $\sigma \in G({\cal M}) \setminus R({\cal M})$ never happens.

Next, we consider the case $G({\cal M}) \simeq D_{|E|}$. In this case, we have $|E|=2r$ for some $r \geq 2$ and $R({\cal M}) \simeq \mathbb{Z}_r$.
Therefore, we have $r=2q-1$ for some $q \in \mathbb{N}$ by Proposition \ref{prop:classification_rank2_rotation}.
We can take $\sigma \in R({\cal M}), \tau \in G({\cal M}) \setminus R({\cal M})$ and $x \in E$ such that
$x$,$\tau \sigma^{-1}(x)$,$\sigma(x)$,$\tau \sigma^{-2}(x)$,$\sigma^2(x)$,$\dots$,$\tau (x)$ form a cycle ($= IG({\cal M})$) in this order.
There exists $A \subseteq E$ such that $_{-A}{\cal M} \simeq A_{2,4q-2}$.
Without loss of generality, we assume that $A \cap \{ \sigma^k (x) \mid k \in \mathbb{Z}, \ 0 \leq k < 4q-2 \} = \{ \sigma^{2k} (x) \mid k \in \mathbb{Z}, \ 0 \leq 2k < 4q-2 \}$.
Suppose that $\tau \sigma^{-1}(x) \in A$.
Then, we have $A \cap \{ \tau \sigma^{-k} (x) \mid k \in \mathbb{Z}, \ 0 \leq k < 4q-2 \} = \{ \tau\sigma^{-2k-1} (x) \mid k \in \mathbb{Z}, \ 0 \leq 2k+1 < 4q-2 \}$.
If $q > 1$, this leads to that $- = \chi (x,\sigma (x)) = - \tau \cdot \chi (x,\sigma (x)) = \chi (\tau (x), \tau \sigma^{-(2q-2)}(x)) = +$, which is a contradiction.
If $\tau \sigma^{-1}(x) \notin A$ and $q > 1$, we have  
 $A \cap \{ \tau \sigma^{-k} (x) \mid k \in \mathbb{Z}, \ 0 \leq k < 4q-2 \} = \{ \tau\sigma^{-2k} (x) \mid k \in \mathbb{Z}, \ 0 \leq 2k < 4q-2 \}$.
This also yields that $- = \chi (x,\sigma (x)) = \chi (\tau (x), \tau \sigma^{-(2q-2)}(x)) = +$,  a contradiction.
Hence, the case $G({\cal M}) \simeq D_{|E|}$ never happens.
\end{proof}
\\
We note here that a simple acyclic oriented matroid rank $2$ cannot have a rotational symmetry of order $p>2$.

\section{Symmetry groups of simple oriented matroids of rank $3$}
In this section, we investigate symmetry groups of rank $3$ oriented matroids.
The strategy is, as the rank $2$ case, to investigate structure of orbits under the actions of rotational and full symmetry groups.

Orbit structure in the acyclic case is easy to understand.
Let ${\cal M}$ be a simple acyclic oriented matroid of rank $3$ on a ground set $E$ on which $R({\cal M})$ acts transitively.
If $|R({\cal M})| > 2$, then ${\cal M}$ is a matroid polytope of rank $3$ and thus is isomorphic to the alternating matroid $A_{3,|E|}$ (Proposition \ref{prop:rank_relabeling} in Appendix 1).
Therefore, the rotational symmetry group $R({\cal M})$ is a cyclic group of order $|E|$.
This yields that rotational symmetry groups of simple acyclic oriented matroids of rank $3$ are cyclic groups $\mathbb{Z}_n$ ($n \geq 1$).
Full symmetry groups are dihedral groups $D_{2n}$ ($n \geq 0$), where $D_0 := \{ \id \}$.

In the remaining part of this section, we will classify rotational and full symmetry groups of simple oriented matroids of rank $3$, without assuming acyclicity.
Roughly speaking, we can classify situations into three cases.
\begin{itemize}
\item There exists a {\it non-proper rotational symmetry} of order $p>2$
(Let us call a rotational symmetry of an  oriented matroid of rank $3$ a {\it proper  rotational symmetry} if its order is $2$ or there is a rank $3$ orbit under the cyclic group generated by $\sigma$).
This case will be studied in Section \ref{sec:non_proper}.
\item There is a proper rotational symmetry of order $p>2$ (Sections \ref{sec:proper1} and \ref{sec:proper2}).
\item Every rotational symmetry has order $1$ or $2$ (Section \ref{sec:proper3}).
\end{itemize}

\subsection{Structure of orbits under the actions of rotational symmetry groups that contain non-proper rotational symmetries}
\label{sec:non_proper}
We first study the condition when a simple oriented matroid of rank $3$ has a non-proper rotational symmetry.

\begin{prop}
\label{prop:non_proper}
Let ${\cal M}=(E,\{ \chi, -\chi \})$ be a simple oriented matroid of rank $3$.
Let $\sigma \in R({\cal M})$ be a non-proper rotational symmetry of order $2p-1 \geq 3$ and $X:=\{ x, \sigma (x), \dots, \sigma^{2p-2}(x) \}$, 
$Y:=\{ y, \sigma (y), \dots, \sigma^{2p-2}(y) \}$ be orbits under the action of the cyclic group generated by $\sigma$. 
If $\rank(X \cup Y) = 3$, then $|X|=1$ or $|Y|=1$.
\end{prop}
\begin{proof}
If $|X| \geq 2$, then $|X|=2p-1$ by simplicity (and Corollary \ref{cor:uniqueness}). The same applies to $Y$.
Without loss of generality, we assume that $|X|=2p-1 \geq 3$ and that the elements $x,\sigma (x), \dots, \sigma^{2p-1}(x)$ form the oriented matroid $_{- \{ 2,4,\dots,2p-2\}}A_{2,2p-1}$
in this order. 

Now suppose that $|Y| = 2p-1$.
Let $y \in Y$ be such that $\rank(X \cup \{ y \})=3$
(this leads to that $\rank(X \cup \{ \sigma^k(y) \}) = 3$ for any $k \in \mathbb{Z}$).
Let us write $A:= \{ \sigma (x),\sigma^3 (x), \dots, \sigma^{2p-3}(x), \sigma (y),\sigma^3 (y), \dots, \sigma^{2p-3}(y) \}$ from here on.
Without loss of generality, we assume that
$\chi (\sigma^i (x), \sigma^j (x), y) = (-1)^{|A \cap \{ \sigma^i(x),\sigma^j(x) \} |}$ for all $i,j \in \mathbb{Z}$ with $0 \leq i < j < 2p-1$.
Then, simple computation yields that 
$\chi (\sigma^i (x), \sigma^j (x), \sigma^a (y)) = (-1)^{A \cap \{ \sigma^i(x),\sigma^j(x) \} |}$ for all $a, i,j \in \mathbb{Z}$ with $0 \leq i < j < 2p-1$.
Similarly, if we denote $s := \chi (y,\sigma (y), x)$, it holds that
$\chi (\sigma^i (y), \sigma^j (y), \sigma^a (x)) = s \cdot (-1)^{|A \cap \{ \sigma^i(y),\sigma^j(y) \} |+1}$ for all $a, i,j \in \mathbb{Z}$ with $0 \leq i < j < 2p-1$.
By (B3) of the chirotope axioms, we have
\begin{align*}
 \{ \chi (x,\sigma (x),\sigma^2(x)) \chi (y,\sigma (y),\sigma^2(y)),  \chi (y,\sigma (x), \sigma^2 (x)) \chi (x,\sigma (y),\sigma^2(y)), \\
 \chi (\sigma (y),\sigma (x), \sigma^2 (x)) \chi (y,x,\sigma^2 (y)), \chi (\sigma^2 (y), \sigma (x),\sigma^2 (x)) \chi (y,\sigma (y),x) \} \\
= \{ 0 \} \text{ or } \supseteq \{ +,- \}.
\end{align*}
This yields that $\{ -s, 0 \} = \{ 0 \}$ and that $s=0$, which is a contradiction.
As a conclusion, it must hold that $|Y|=1$.
The same applies to the case $|Y|=2p-1$.
\end{proof}
\\
By Proposition \ref{prop:non_proper}, if a simple oriented matroid ${\cal M}$ of rank $3$ has a non-proper rotational symmetry,
${\cal M}$ is of the form ${\cal N} \cup q$,
where ${\cal N}$ is a simple oriented matroid of rank $2$ with a rotational symmetry $\sigma$ and $q$ is a coloop.
This leads to the following proposition.
\begin{prop}
Let ${\cal M}$ be a simple oriented matroid of rank $3$.
If $R({\cal M})$ contains a non-proper rotational symmetry, then we have
\begin{itemize}
\item $R({\cal M}) \simeq \mathbb{Z}_{2p-1}$ for some $p \geq 2$. An $R({\cal M})$-orbit is isomorphic to $A_{1,1}$ or $_{-\{ 2,4,\dots,2p-2\}}A_{2,2p-1}$.
\item $G({\cal M}) \simeq D_{4p-2}$ for some $p \geq 2$. A $G({\cal M})$-orbit is isomorphic to  $A_{1,1}$ or  $_{-\{ 2,4,\dots,2p-2\}}A_{2,2p-1}$.
\end{itemize}
\end{prop}
For each $p \geq 2$, we have $R({_{-\{ 2,4,\dots,2p-2\}}A_{2,2p-1}} \cup q) \simeq \mathbb{Z}_{2p-1}$ and $G({_{-\{ 2,4,\dots,2p-2\}}A_{2,2p-1}} \cup q) \simeq D_{4p-2}$, where $q$ is a coloop.
Therefore, the cases $R({\cal M}) \simeq \mathbb{Z}_{2p-1}$ and $G({\cal M}) \simeq D_{4p-2}$ indeed happen for each $p \geq 2$.

\begin{figure}[h]
\begin{center}
\includegraphics[bb = 60 475 476 750, scale=0.30,clip]{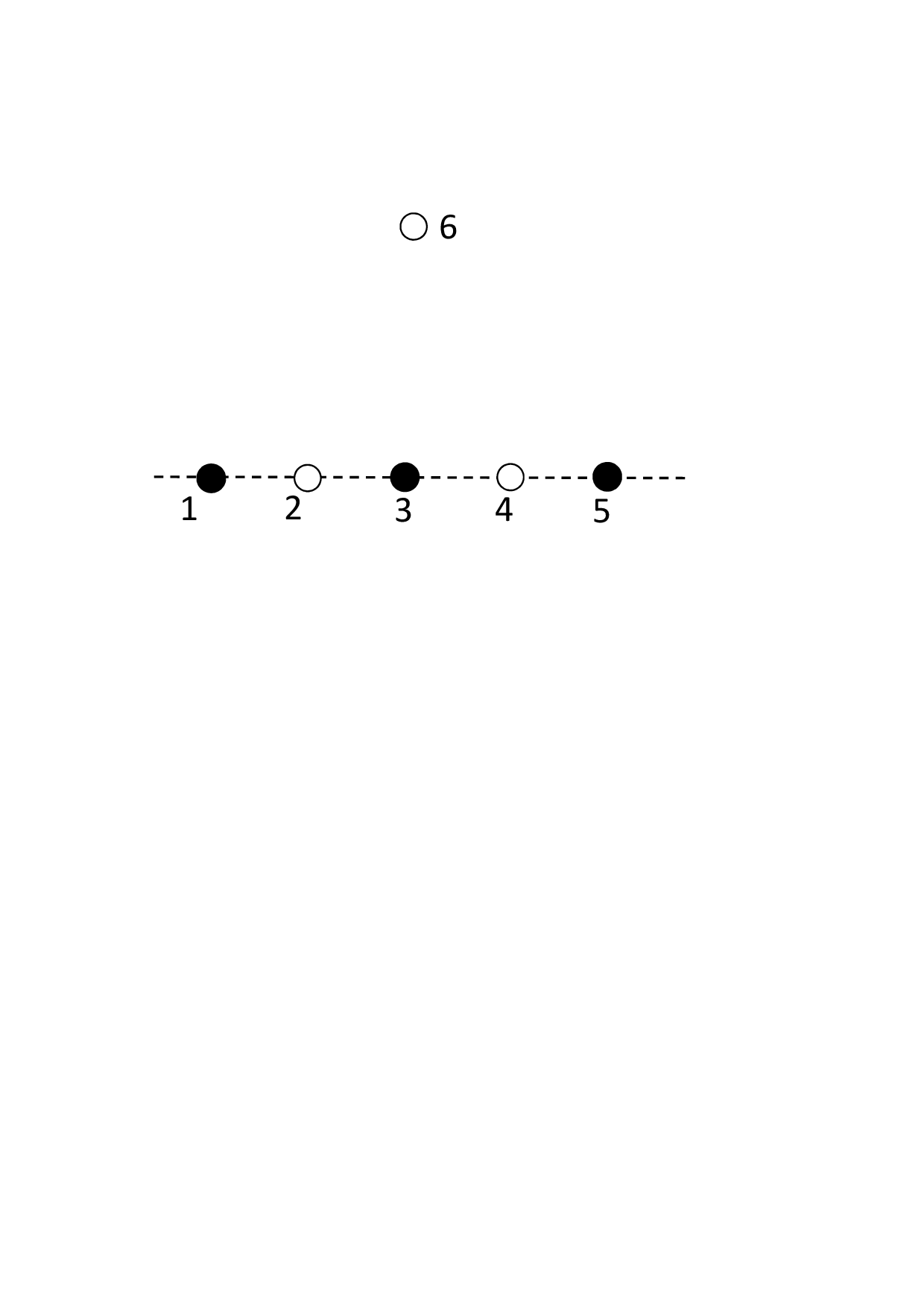}
\end{center}
\caption{A rank $3$ oriented matroid with non-proper rotational symmetries (depicted as a signed point configuration. White points are positive points and black points are negative points.
Point $6$ is a coloop)}
\label{fig:non_proper}
\end{figure}

\subsection{Structure of rank $3$ orbits under the actions of cyclic groups generated by proper rotational symmetries}
\label{sec:proper1}
Cyclic groups are groups of simplest type.
Let us first investigate orbits under the actions of cyclic groups generated by proper rotational symmetries.

\begin{prop}
\label{rank3_transitivity1}
Let ${\cal M}$ be a simple (not necessarily acyclic) oriented matroid of rank $3$ on a ground set $E$ with a non-trivial rotational symmetry $\sigma$.
If the cyclic group $G$ generated by $\sigma$ acts transitively on $E$, then we have ${\cal M} \simeq A_{3,|E|}$.
\end{prop}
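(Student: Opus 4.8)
The plan is to show that the hypotheses force a very rigid cyclic structure on the cocircuits, which is exactly the structure of the alternating matroid $A_{3,|E|}$. Write $|E| = n$ and fix a generator $\sigma$ of $G$, so $G$ is cyclic of order $n$ acting transitively (hence regularly) on $E$; relabel $E = \{1,2,\dots,n\}$ so that $\sigma(i) = i+1 \pmod n$. Since ${\cal M}$ is simple of rank $3$, each element $e$ lies on at least one cocircuit, and the key object is a cocircuit $C$ with $C(1) = 0$: the complement of its zero set together with its sign pattern records how the remaining elements split into two "sides'' of the line through $\mathrm{Flat}(\{e : C(e) = 0\})$. The idea is to extract a single such cocircuit and then transport it around by $\sigma$.

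First I would argue that $\sigma$, being a non-trivial rotational symmetry of a rank $3$ simple oriented matroid, is in fact a \emph{proper} rotational symmetry in the sense defined just above the statement — i.e. its cyclic group $G$ does not produce a rank $2$ orbit — because transitivity would otherwise force the whole ground set to have rank $2$, contradicting simplicity once $n \geq 3$ (and the cases $n \leq 3$ are trivial). Next, by Theorem~\ref{thm:uniqueness_general} applied to the non-trivial rotational symmetry $\sigma$, we have $rank({\cal M}|_{Fix(\sigma)}) \leq 1$; combined with transitivity (which makes $Fix(\sigma)$ empty unless $n=1$) this rules out the degenerate possibilities. Then I would look at the cocircuit structure: take any cocircuit $C$. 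Its zero set $Z = \{e : C(e) = 0\}$ is a hyperplane (corank $1$ flat), so $|Z| \leq n - 2$ and $E \setminus Z$ is partitioned by $C$ into $C^+$ and $C^-$. Applying $\sigma$ gives the cocircuit $\sigma C$ with zero set $\sigma Z$; iterating, we get an $\langle\sigma\rangle$-orbit of cocircuits, and using axiom (V2)/(V3) (covector elimination) together with uniqueness (Corollary~\ref{cor:uniqueness}) one pins down how consecutive cocircuits in this orbit must interleave. The main structural step is to show that the hyperplanes of ${\cal M}$ are exactly the sets of the form $\{i, i+1\}$ for consecutive indices (equivalently, no three cyclically-consecutive elements are collinear, and every non-consecutive pair spans a line containing no third point) — this is precisely the hyperplane structure of $A_{3,n}$, and once we know the hyperplanes we know the matroid, hence (by orienting consistently with $\sigma$ acting as a rotation) the oriented matroid up to relabeling.

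The step I expect to be the main obstacle is controlling the sign patterns so as to conclude we get the \emph{alternating} matroid and not merely a cyclically symmetric rank $3$ matroid with the right underlying matroid: one must verify that the orientation is forced, i.e. that a rotational (not reflective) cyclic symmetry acting transitively cannot be realized by any oriented matroid whose cocircuit signs differ from those of $A_{3,n}$ in a way that still respects $\sigma$. I would handle this by a parity/winding argument: fix the cocircuit $C$ through two antipodal-ish elements, follow the images $C, \sigma C, \sigma^2 C, \dots$, and observe that each application of $\sigma$ shifts the "positive side'' by one element in a fixed rotational direction; consistency of the chirotope axioms over one full cycle $\sigma^n = \mathrm{id}$ then forces the cyclic sign pattern to be, up to a global sign and relabeling, the alternating one. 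A possible subtlety is small $n$ (e.g. $n = 3, 4$), which I would dispatch by direct inspection, and the possibility that ${\cal M}$ is non-simple-looking after restriction, which is excluded by the simplicity hypothesis. Putting these together yields that ${\cal M}$ is relabeling equivalent to $A_{3,n}$, as claimed.
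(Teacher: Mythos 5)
Your proposal is a plan rather than a proof, and the two places where the real work lies are exactly the places you leave open. First, the central difficulty of this proposition is that ${\cal M}$ is \emph{not} assumed acyclic: the heart of the matter is to prove that transitivity of the cyclic group forces ${\cal M}$ to be acyclic, and then a matroid polytope (after which one can quote the known fact that every rank-$3$ matroid polytope is a relabeling of an alternating matroid). Your sketch never establishes acyclicity; it jumps to analyzing hyperplanes and sign patterns of an orbit of cocircuits, and the decisive step (``consistency of the chirotope axioms over one full cycle \dots forces the cyclic sign pattern to be \dots the alternating one'') is precisely the part you flag as the main obstacle and then do not carry out. No amount of knowledge of the underlying matroid can substitute for this: even if you showed the underlying matroid is uniform, that says nothing about the orientation, so the entire content of the proposition is concentrated in the step you leave as a hand-wave. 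By contrast, the paper's proof reduces the statement to showing ${\cal M}$ is a matroid polytope and proves this by induction on the initial segments $E_m=\{x,\sigma(x),\dots,\sigma^{m-1}(x)\}$ of the cyclic order: a lemma shows that if both one-element deletions ${\cal N}|_{F-\{1\}}$ and ${\cal N}|_{F-\{m\}}$ are matroid polytopes then ${\cal N}$ is acyclic, and then an explicit vector-elimination argument (transporting vectors by powers of $\sigma$) rules out any non-extreme element of ${\cal M}|_{E_{m+1}}$, contradicting the induction hypothesis on ${\cal M}|_{E_m}$.

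Second, your ``main structural step'' is mis-stated. In the alternating matroid $A_{3,n}$ the underlying matroid is uniform, so \emph{every} pair of elements is a hyperplane (a rank-$2$ flat); the consecutive pairs $\{i,i+1\}$ are the \emph{facets} of the matroid polytope (zero sets of the nonnegative cocircuits), not the hyperplanes of the matroid. As written, the claim that the hyperplanes are exactly the consecutive pairs is internally inconsistent (the closure of a non-consecutive pair would then have to be a consecutive pair), and the correct version of the claim — that the facet structure is the $n$-cycle of consecutive pairs — is again something that must be \emph{derived} from transitivity and the oriented-matroid axioms, which is what the paper's inductive argument does. So while your general instinct (transport a cocircuit around by $\sigma$ and exploit the rigidity of rank $3$) points in a reasonable direction, the proposal as it stands has a genuine gap at the acyclicity/orientation-forcing step and a wrong identification of the combinatorial structure it aims to pin down.
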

\begin{proof}
It suffices to prove that ${\cal M}$ is a matroid polytope by Proposition \ref{prop:rank_relabeling} in Appendix 1.
Since the cyclic group $G$ acts transitively on $E$, the set $E$ can be written as 
$E=\{ x, \sigma (x), \sigma^2 (x), \dots, \sigma^{n-1}(x) \}$, where $n:=|E|$.
Let us simply write $i$ to describe $\sigma^i(x) \in E$ and denote by $[m]$ the set $\{ \sigma^i (x) \mid i=0,\dots,m-1 \}$ when there is no confusion.

Remark that ${\cal M}|_{[4]}$ is acyclic and thus is a matroid polytope by transitivity.
Let $\chi$ be a chirotope of ${\cal M}|_{[4]}$.
Then, we have $\chi (1,2,3) = \sigma \cdot \chi (1,2,3) = \chi (2,3,4)$.
Simple case analysis on the values of $\chi (1,2,4)$ and $\chi (1,3,4)$ shows existence of the positive covector.
We prove that if ${\cal M}|_{[m]}$ is a matroid polytope, then ${\cal M}|_{[m+1]}$ is a matroid polytope as well, for $m=4,\dots,n-1$.

\begin{quote}
{\bf Lemma:}
Let ${\cal N}$ be a simple oriented matroid of  rank $3$ on the ground set $[p]$ ($p \geq 5$) such that
${\cal N}|_{[p] \setminus \{ 1 \}}$ and ${\cal N}|_{[p] \setminus \{ p \}}$ are  matroid polytopes of rank $3$.
Then ${\cal N}$ is acyclic.
\end{quote}
(Proof of the lemma)

Assume that ${\cal N}$ is cyclic, i.e., there exists an integer $k \in [p]$ and a vector $X$ such that 
\begin{align*}
\begin{split}
&X(i_1)=X(i_2)=\dots =X(i_k)=+, \\
&X(a)=0 \text{ for all $a \in [p] \setminus \{ i_1,i_2,\dots,i_k \}$}, 
\end{split}
\end{align*}
where $i_1,\dots,i_k \in [p]$.
If $i_1,\dots,i_k > 1$, then $X|_{[p] \setminus \{ 1 \}}$ is a vector of ${\cal N}|_{[p] \setminus \{ 1 \}}$, a contradiction.
This yields that $1 \in \{ i_1,\dots,i_k\}$.
Since ${\cal N}|_{[p] \setminus \{ p \}}$ is a matroid polytope of rank $3$, i.e., a relabeling of the alternating matroid $A_{3,p-1}$ (Proposition \ref{prop:rank_relabeling} in Appendix 1), 
there exists a vector $Y$ of ${\cal N}$ such that 
\begin{align*}
\begin{split}
&Y(1)=-, Y(s)=-, Y(t)=+, Y(u)=+,\\
&Y(a)=0 \text{ for all $a \in [p] \setminus \{ 1,s,t,u\}$},
\end{split}
\end{align*}
for distinct numbers $s,t,u \in [p] \setminus \{ 1, p \}$ (recall Proposition \ref{prop:alter_circuits}).
By applying vector elimination to $X,Y$ and $1$, we obtain a vector $Z$ of ${\cal N}$
such that 
\begin{align*}
\begin{split}
&Z(1)=0, Z(s) \in \{ +,-,0\},\\
&Z(a) \geq 0 \text{ for all $a \in [p] \setminus \{ 1,s\}$}.
\end{split}
\end{align*}
$Z|_{[p] \setminus \{ 1 \}}$ is a vector of ${\cal N}|_{[p] \setminus \{ 1\}}$.
This contradicts to the assumption that ${\cal N}|_{[p] \setminus \{ 1\}}$ is a matroid polytope.
This completes the proof of the lemma.
\\
\\
Now we prove that ${\cal M}|_{[m+1]}$ is a matroid polytope.
By the above lemma, the oriented matroid ${\cal M}|_{[m+1]}$ is acyclic (note that ${\cal M}|_{[m+1] \setminus \{ 1\}}$ is a matroid polytope by transitivity).
Assume that ${\cal M}|_{[m+1]}$ is not a matroid polytope.
Then one of the following holds.
\\ \\
{\bf (i)} There exists a vector $X$ of ${\cal M}|_{[m+1]}$ such that $X(m+1)=-$ and $X(a) \geq 0$ for all $i \in [m]$.

Note that $|X^+| \geq 2$ since ${\cal M}$ is simple.
We classify the situations into the following two cases.
\\
\\
{\bf (i-a)} $X^+$ is a facet of ${\cal M}|_{[m]}$ (only when $|X^+|=2$).

Let $\widehat{V}$ be the cocircuit of ${\cal M}$ such that $\widehat{V}^0 \supseteq X^+$ and $\widehat{V}^+ \supseteq [m] \setminus X^+$, and
$\widehat {X}$ be the vector of ${\cal M}$ such that $\widehat{X}|_{[m+1]} = X$ and $\widehat{X}^0 \supseteq [n] \setminus [m+1]$.
Since $\widehat{V} \perp \widehat{X}$, we have $\widehat{V}(m+1) = 0$. 
On the other hand, the relation $\sigma (\widehat{X}) \perp \widehat{V}$ leads to $\widehat{V}(m+2) = +$.
Continuing this discussion, we have $\widehat{V}(k) = +$ for all $k \in [n] \setminus [m+1]$.
This yields that the composition $\widehat{V} \circ \sigma (\widehat{V}) \circ \dots \circ \sigma^{n-1}(\widehat{V})$ is the positive covector, 
and thus ${\cal M}$ is acyclic.
Because of transitivity, this yields that ${\cal M}|_{[m+1]} \simeq A_{3,m+1}$. This is a contradiction.
\\
\\
{\bf (i-b)} $X^+$ is not a facet of ${\cal M}|_{[m]}$.

Since ${\cal M}|_{[m]} \simeq A_{3,m}$,
${\cal M}$ has a circuit $C_x$ such that $|C_x|=4$ and $C_x^- \subseteq X^+$, and 
$C_x^+ \supseteq \{ x \}$  for each $x \in [m] \setminus X^+$ (recall Proposition \ref{prop:alter_circuits}).
Considering the composition of $\widehat{X}$ and the circuits $C_x$ appropriately,
we obtain the vector $Y$ of ${\cal M}|_{[m+1]}$ such that $Y(m+1)=-$ and $Y(a)=+$ for all $i \in [m]$.
Therefore, ${\cal M}$ has the vector $\widehat{Y}$ such that 
\begin{align*}
\begin{split}
&\widehat{Y}(m+1)=-, \\
&\widehat{Y}(a)=+ \text{ for all $a \in [m]$,}\\
&\widehat{Y}(b) = 0 \text{ for all $[n] \setminus [m+1]$}.
\end{split}
\end{align*}
Let us consider the vector $\widehat{Y}_1:= \sigma (\widehat{Y})$, which satisfies 
\begin{align*}
\begin{split}
&\widehat{Y}_1(m+2)=-,\\
&\widehat{Y}_1(a)=+ \text{ for all $a \in [m+1] \setminus \{ 1 \},$}\\
&\widehat{Y}_1(b) = 0 \text{ for all $b \in ([n] \setminus [m+2]) \cup \{ 1 \}$}.
\end{split}
\end{align*}
By applying vector elimination to $\widehat{Y},\widehat{Y}_1$ and $m+1$, we obtain a vector $\widehat{Z}_1$ of ${\cal M}$ such that
\begin{align*}
\begin{split}
&\widehat{Z}_1(m+2)=-,\\
&\widehat{Z}_1(a)=+ \text{ for all $a \in [m]$}, \\
&\widehat{Z}_1(b) = 0 \text{ for all $b \in ([n] \setminus [m]) \setminus \{ m+2 \}$}.
\end{split}
\end{align*} 
Let us consider the vector $\widehat{Y}_2:= \sigma^2 (\widehat{Y})$, which satisfies
\begin{align*}
\begin{split}
&\widehat{Y}_2(m+3)=-,\\
&\widehat{Y}_2(a)=+ \text{ for all $a \in [m+2] \setminus \{ 1,2 \},$}\\
&\widehat{Y}_2(b) = 0 \text{ for all $b \in ([n] \setminus [m+3]) \cup \{ 1,2 \}$}.
\end{split}
\end{align*}
Let us take a vector $\widehat{W}$ obtained by applying vector elimination to $\widehat{Y}_2, \widehat{Z}_1$ and $m+2$,
and then a vector $\widehat{Z}_2$ obtained by applying vector elimination to $\widehat{W},\widehat{Y}$ and $m+1$ satisfies
\begin{align*}
\begin{split}
&\widehat{Z}_2(m+3)=-,\\
&\widehat{Z}_2(a)=+ \text{ for all $a \in [m]$}, \\
&\widehat{Z}_2(b) = 0 \text{ for all $b \in ([n] \setminus [m]) \setminus \{ m+3 \}$}.
\end{split}
\end{align*}
By repeating the same argument, we obtain a vector $\widehat{Z}_*$ of ${\cal M}$ such that
\begin{align*}
\begin{split}
&\widehat{Z}_*(n)=-, \\
&\widehat{Z}_*(a)=+ \text{ for all $a \in [m]$},\\
&\widehat{Z}_*(a)=0 \text{ for all $a \in ([n] \setminus [m]) \setminus \{ n \}$}.
\end{split}
\end{align*}
The vector $\widehat{Z}_{**}:= \sigma (\widehat{Z}_*)$ satisfies
\begin{align*}
\begin{split}
&\widehat{Z}_{**}(1)=-,\\
&\widehat{Z}_{**}(a)=+ \text{ for all $a \in [m+1] \setminus \{ 1 \}$},\\
&\widehat{Z}_{**}(b)=0 \text{ for all $b \in [n] \setminus [m+1]$}.
\end{split}
\end{align*}
By applying vector elimination to $\widehat{Y},\widehat{Z}_{**}$ and $m+1$, we obtain a vector $\widetilde{Z}$
such that
\begin{align*}
\begin{split}
&\widetilde{Z}(1) \in \{ -,0,+\}, \\
&\widetilde{Z}(a)=+ \text{ for all $a \in [m] \setminus \{ 1 \}$,}\\
&\widetilde{Z}(b)=0 \text{ for all $b \in [n] \setminus [m]$}.
\end{split}
\end{align*}
This contradicts to the assumption that ${\cal M}|_{[m]}$ is a matroid polytope.
\\
\\
{\bf (ii)} There exists a vector $X$ of ${\cal M}|_{[m+1]}$ such that $X(k)=-$ and $X(a) \geq 0$ for all $[m+1] \setminus \{ k\}$, where $ k \in [m+1] \setminus \{ 1 \}$.

One can apply a similar argument to (i), keeping in mind that ${\cal M}|_{[m+1]}$ is acyclic.
\end{proof}

Therefore, an orbit $X$ under the cyclic subgroup generated by a proper rotational symmetry $\sigma$ of order $p$ is isomorphic to $A_{3,p}$ or $A_{1,1}$
(recall that there is no rank 2 orbits by the result of Section \ref{sec:non_proper}).
With a small modification of the proof, we have that if $\sigma$ is a reflection symmetry, then ${\cal M}|_X \simeq {_{-[p] \cap 2\mathbb{N}}A_{3,p}}$

\subsection{Structure of orbits under the actions of the rotational symmetry groups that contain only proper rotational symmetries}
\label{sec:proper2}
Now we proceed to investigating structure of orbits under the groups generated by two rotational symmetries.
We assume that $R({\cal M})$ contains a proper rotational symmetry of order $p>2$.
By the results of Section \ref{sec:non_proper}, all elements of  $R({\cal M})$ are proper rotational symmetries in this case.

\begin{prop}
\label{prop:rank3_transitivity2}
Let ${\cal M}=(E,\{ \chi, -\chi \})$ be a simple oriented matroid of  rank $3$ and
$G$ the cyclic group generated by a proper rotational symmetry $\sigma \in R({\cal M})$ of order $p >2$.
For rank $3$ $G$-orbits $X:=\{ x,\sigma(x),\dots,\sigma^{p-1}(x)\}$ and $Y:=\{ y,\sigma (y),\dots, \sigma^{p-1}(y)\}$,
it holds that $X = Y$ or that $X \cap Y = \emptyset$.

If $X \cap Y = \emptyset$, then ${\cal M}|_{X \cup Y}$ or $_{-Y}{\cal M}|_{X \cup Y}$ is acyclic.
If in addition $y = \tau (x)$ for some $\tau \in G({\cal M})$, then we have ${\cal M}|_{X \cup Y} \simeq A_{3,2|X|}$ or 
$_{-Y}{\cal M}|_{X \cup Y} \simeq A_{3,2|X|}$ (In this case, we have ${\cal M} \simeq {_{- \{ 2,4, \dots, 2|X| \}}A_{3,2|X|}}$). 
If $\sigma$ is the 1st rotational symmetry of ${\cal M}|_X$, then $\sigma$ is the 2nd rotational symmetry of  ${\cal M}|_{X \cup Y}$ or $_{-Y}{\cal M}|_{X \cup Y}$.
\end{prop}
\begin{proof} 
We prove the proposition assuming $X \neq Y$.
Without loss of generality, we suppose that $x,\sigma (x), \sigma^2(x),\dots,\sigma^{p-1}(x)$ form an alternating matroid in this order, i.e.,
$\chi (\sigma^i(x),\sigma^j(x),\sigma^k(x))=+$ for all $i,j,k \in \mathbb{Z}$ with $0 \leq i < j < k \leq p-1$ (take the negative of $\chi$ if necessary).
Note that $\sigma|_Y$ is a rotational symmetry of ${\cal M}|_Y$.
Also, note that $(\sigma|_Y)^i \neq \id$ for any $i \in [p-1]$. Otherwise, we have $\sigma^i=\id$ by Corollary \ref{cor:uniqueness}, which is a contradiction.
Therefore, there exists $l \in \mathbb{Z}$ with $1 \leq l \leq p-1$ such that $y,\sigma^l (y), \dots, \sigma^{(p-1)l}(y)$ form an alternating matroid in this order
(In this setting, $\{ x, \sigma (x)\}$ (resp.\ $\{ y, \sigma^l (y) \}$) is a facet of ${\cal M}|_X$ (resp.\ ${\cal M}|_Y$)).
In this proof, we write ${\cal N}:= {\cal M}|_{X \cup Y}$ and $x_1:=x, x_2:=\sigma(x),\dots, x_p:=\sigma^{p-1}(x)$.

First, we prove that ${\cal N}$ or $_{-Y}{\cal N}$ is acyclic under the assumption that $l=1$ or $l=p-1$.
Consider the cocircuit $V$ of ${\cal N}$ such that 
\begin{align*} 
\text{$V(x_1)=V(x_2)=0$ and  $V(e)=+$ for all $e \in X \setminus \{ x_1, x_2\}$}
\end{align*}
and write $V_0:=V,V_1:=\sigma (V),\dots, V_{p-1}:=\sigma^{p-1}(V)$.
\\
\begin{figure}[h]
\begin{center}
\includegraphics[scale=0.25]{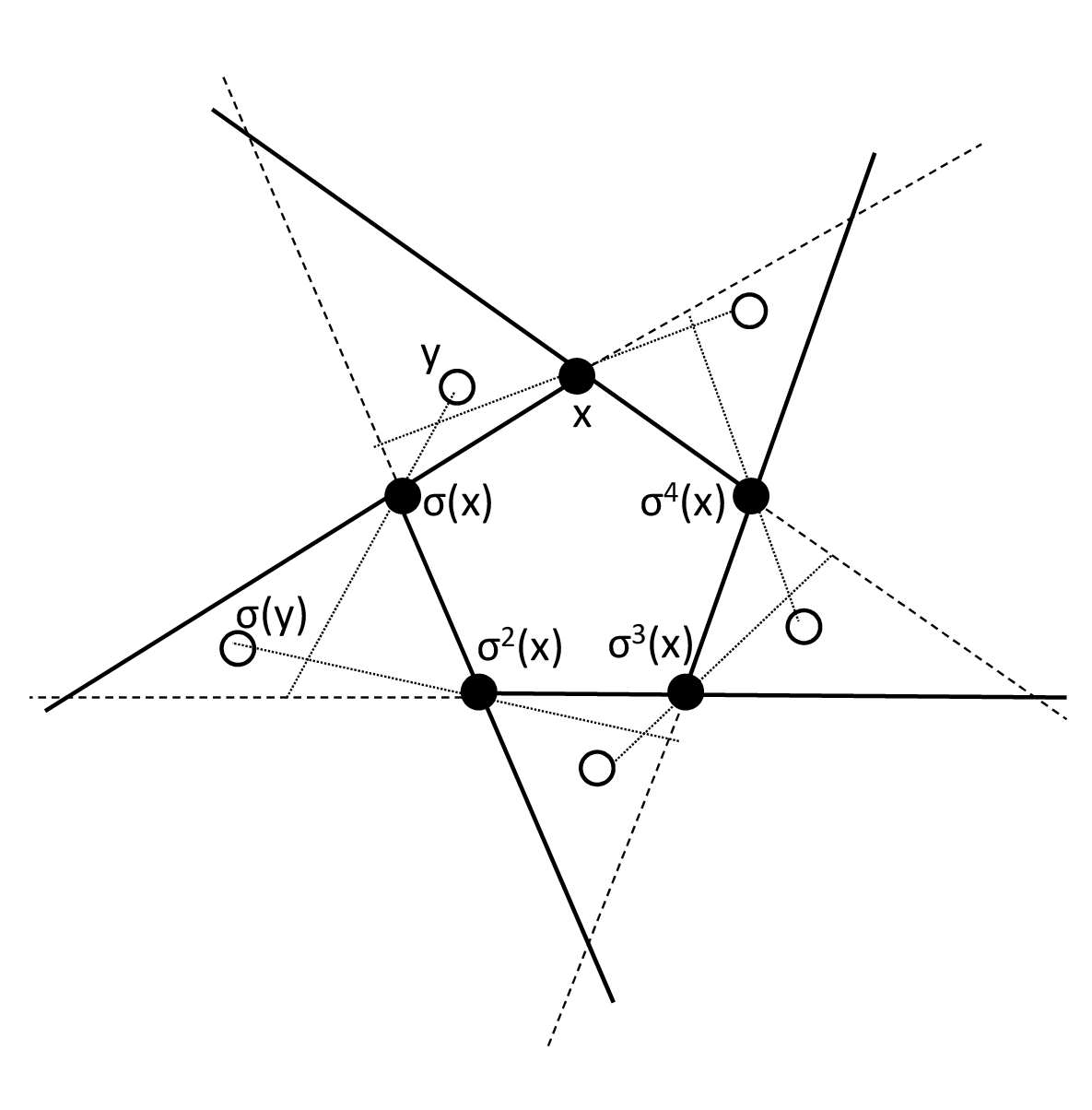}
\includegraphics[scale=0.25]{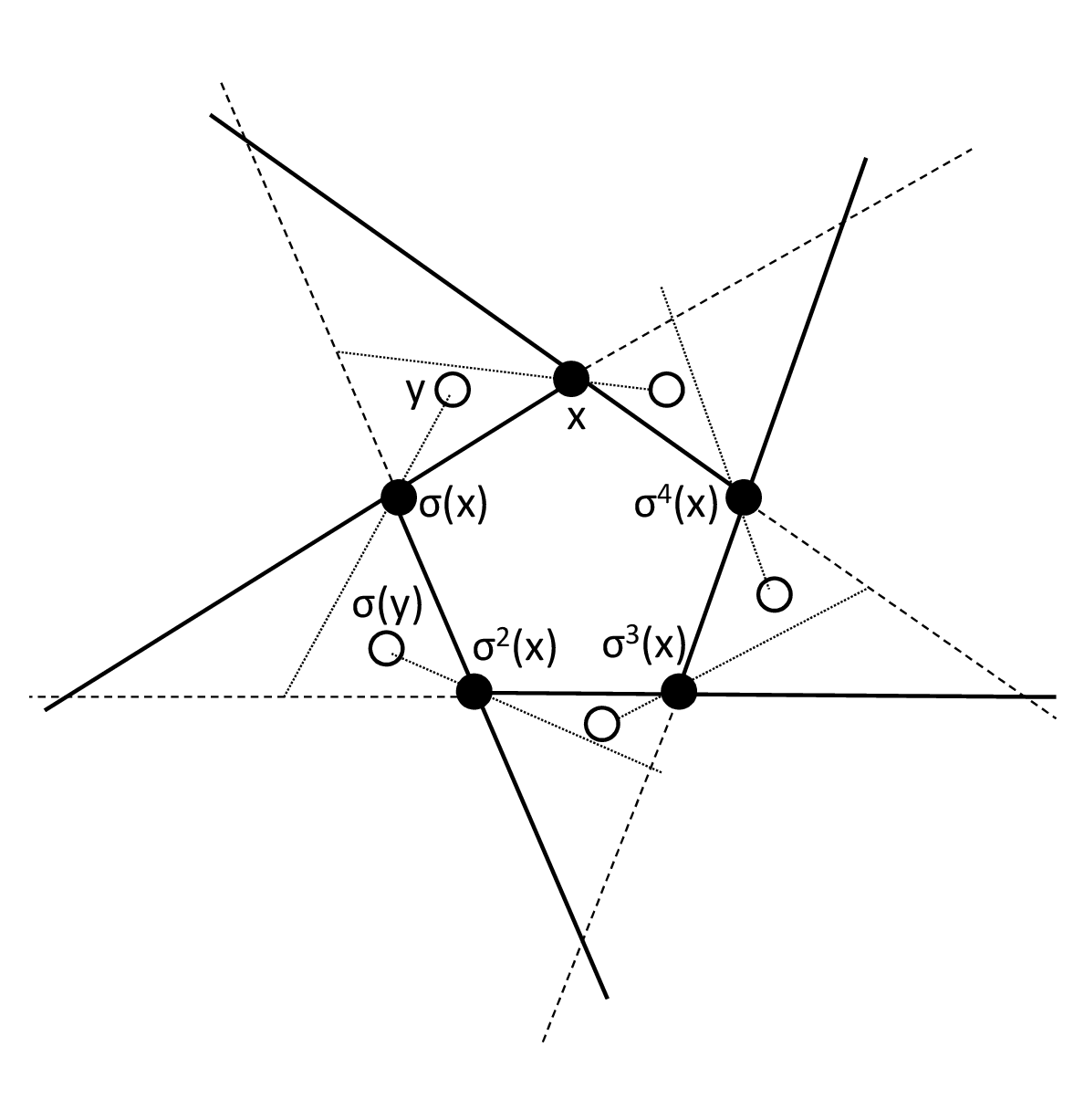}
\includegraphics[scale=0.25]{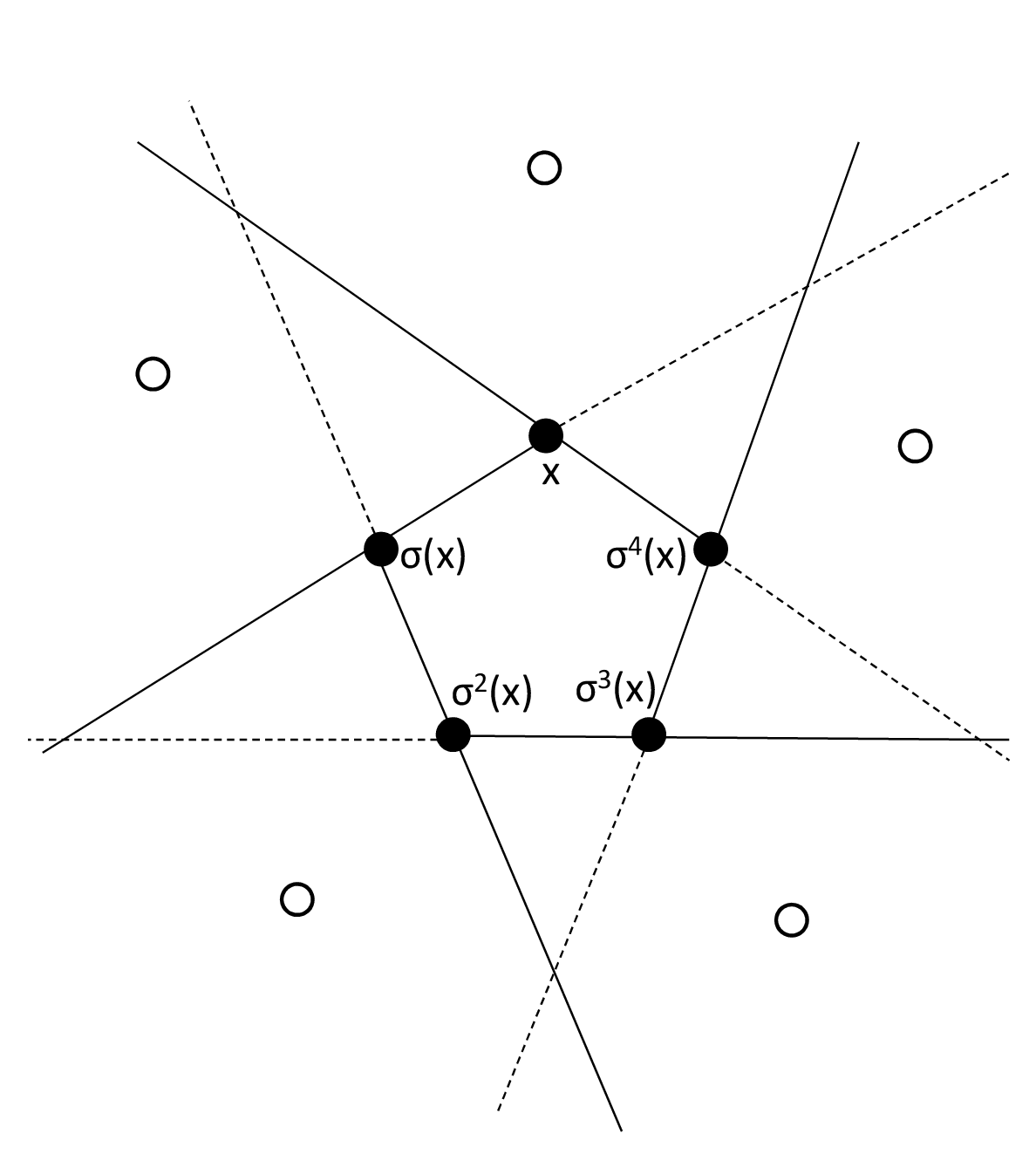}
\caption{${\cal N} = {\cal M}|_{X \cup Y}$ (depicted as signed point configurations. $l_1 = 0, l_2 = p-1$ [left, center],  $l_1 = 1, l_2 = p-1$ [right])}
\end{center}
\end{figure}
\\
By a property of alternating matroids of rank $3$ (Proposition \ref{prop:alter_ordering} in Appendix 1), 
the sign sequence $V_0(e_0)$,$V_1(e_0)$,$\dots$,$V_{p-1}(e_0)$ must be one of the following forms:
\[ +\dots +- \dots -+ \dots +, \ +\dots +0- \dots-+ \dots +, \ +\dots +- \dots-0+ \dots +, \ +\dots +0- \dots-0+ \dots +,\]
\[ -\dots -+ \dots +- \dots -, \ -\dots -0+ \dots+- \dots -, \ -\dots -+ \dots+0- \dots -, \ -\dots -0+ \dots+0- \dots -,\]
where $+\dots +$ and $- \dots -$ may be empty.
If $V_0(e)=+$ for all $e \in Y$ or $V_0(e)=-$ for all $e \in Y$,
it is clear that ${\cal N}$ or $_{-Y}{\cal N}$ is acyclic.
Otherwise, if we assume that $V_i(e_0) \neq 0$ for $i=0,\dots,p-1$ (just for simplicity. All the discussion below similarly applies to the case where $V_i(e_0) = 0$ for some $i$), it holds that
\begin{align*}
 V_0(e_0)=\dots =V_{l_1}(e_0)=+,V_{l_1+1}(e_0)=\dots =V_{l_2}(e_0)=-,V_{l_2+1}(e_0)=\dots =V_{p-1}(e_0)=+
\end{align*}
for some $l_1,l_2 \in \mathbb{Z}$ with $0 \leq l_1 < l_2 \leq p-1$ and $e_0 \in Y$ (Case (A)) or that
\begin{align*}
V_0(e_0)=\dots =V_{l_1}(e_0)=-,V_{l_1+1}(e_0)=\dots =V_{l_2}(e_0)=+,V_{l_2+1}(e_0)=\dots =V_{p-1}(e_0)=-
\end{align*}
for some $l_1,l_2 \in \mathbb{Z}$ with $0 \leq l_1 < l_2 \leq p-1$ and $e_0 \in Y$ (Case (B)).

First, let us consider Case (A).
Without loss of generality (by relabeling $V_0,\dots,V_{p-1}$ appropriately if necessary), we can assume that 
\begin{align*} 
V_0(e_0)=\dots =V_{k}(e_0)=+,V_{k+1}(e_0)=\dots =V_{p-1}(e_0)=-
\end{align*}
for some $k \in \mathbb{Z}$ with $0 \leq k \leq p-1$.
If $k=p-1$, then ${\cal N}$ is clearly acyclic.
On the other hand, if $k=0$, then $_{-Y}{\cal N}$ is acyclic. 
In the following, we consider the case where $1 \leq k \leq p-2$.
Applying $\sigma^i$ to the above relation, we obtain
\begin{align*}
V_0(e_i)=\dots=V_{i-1}(e_i)=-, V_i(e_i)=\dots =V_{i+k}(e_i)=+,V_{i+k+1}(e_i)=\dots =V_{p-1}(e_i)=-
\end{align*}
if $1 \leq i \leq p-1-k$ and
\begin{align*} 
V_0(e_i)=\dots =V_{i+k-p}(e_i)=+, V_{i+k-p+1}(e_i)=\dots = V_{i-1}(e_i)=-,V_i(e_i)=\dots =V_{p-1}(e_i)=+
\end{align*}
if $p-1-k < i < p$.
Therefore, it holds that
\begin{align*} V_0(e_0)=+,V_0(e_1)=\dots=V_0(e_{p-1-k})=-,V_0(e_{p-k})=\dots =V_0(e_{p-1})=+.
\end{align*}
We also see that
\begin{align*}
\begin{split}
&V_{p-1}(e_0)=\dots=V_{p-1}(e_{p-2-k})=-,V_{p-1}(e_{p-1-k})=\dots =V_{p-1}(e_{p-1})=+,\\
&V_{k}(e_0)=\dots = V_{k}(e_{k})=+, V_{k}(e_{k+1})=\dots = V_{k}(e_{p-1})=-,\\
&V_{k+1}(e_0)=-,V_{k+1}(e_1)=\dots=V_{k+1}(e_{k+1})=+,V_{k+1}(e_{k+2})=\dots =V_{k+1}(e_{p-1})=-.
\end{split}
\end{align*}
Applying vector elimination to $V_0,V_{p-1}$ and $e_0$ (resp. $V_{k},V_{k+1}$ and $e_0$),
we obtain a covector $W_1$ (resp. $W_2$) such that
\begin{align*}
\begin{split}
&W_1(e_0)=0, W_1(e_1)=\dots=W_1(e_{p-2-k})=-, W_1(e_{p-k})=\dots =W_1(e_{p-1})=+, \\
&W_2(e_0)=0, W_2(e_1)=\dots=W_2(e_{k})=+, W_2(e_{k+2})=\dots =W_2(e_{p-1})=-.
\end{split}
\end{align*}
If $1 \leq k \leq p-3$, then apply vector elimination to $W_1,W_2$ and $e_1$.
Since $\{ e_0,e_1\}$ is a facet of ${\cal M}|_Y$, we obtain a covector $W_3$ such that
\begin{align*}
&W_3(e_0)=W_3(e_1)=0,\  W_3(e)=+ \text{ for all $e \in X$,} \ W_3(e)=+ \text{ for all $Y \setminus \{ e_0,e_1\}$, or} \\
&W_3(e_0)=W_3(e_1)=0, \ W_3(e)=+ \text{ for all $e \in X$,} \ W_3(e)=- \text{ for all $Y \setminus \{ e_0,e_1\}$.}
\end{align*}
Clearly, it holds that $W_3(e)=+$ for all $e \in X$.
By considering $W_3 \circ \sigma (W_3) \circ \dots \circ \sigma^{p-1}(W_3)$, it is proved that the oriented matroid ${\cal N}$ or $_{-Y}{\cal N}$
is acyclic.
If $k=p-2$ and $W_2(e_{p-1}) \geq 0$, 
then ${\cal N}$ is acyclic because of the positive covector $W_2 \circ \sigma (W_2) \circ \dots \circ \sigma^{p-1}(W_2)$.
If $k=p-2$ and $W_2(e_{p-1})=-$, then
apply vector elimination to $W_1,W_2$ and $e_{p-1}$.
Since $\{ e_0,e_{p-1}\}$ is a facet of ${\cal M}|_Y$, the obtained covector $W_4$ fulfills
\begin{align*}
&W_4(e_0)=W_4(e_{p-1})=0,\  W_4(e)=+ \text{ for all $e \in X$,} \ W_4(e)=+ \text{ for all $Y \setminus \{ e_0,e_{p-1}\}$, or} \\
&W_4(e_0)=W_4(e_{p-1})=0,\  W_4(e)=+ \text{ for all $e \in X$,} \ W_4(e)=- \text{ for all $Y \setminus \{ e_0,e_{p-1}\}$.}
\end{align*}
Clearly, it holds that $W_4(e)=+$ for all $e \in X$.
Therefore, ${\cal N}$ or $_{-Y}{\cal N}$ is acyclic because of the positive covector $W_4 \circ \sigma (W_4) \circ \dots \circ \sigma^{p-1}(W_4)$.

The above discussion also applies to the case where $V_i(e_0)=0$ for some $i$ and Case (B).
Therefore, it is concluded that the oriented matroid ${\cal N}$ or $_{-Y}{\cal N}$ is acyclic.
\\
\\
Next, we see that the case $2 \leq l \leq p-2$ never happens. Assume that this relation holds and
consider the covector $W_2$ again.
Since $e_0,\sigma^l(e_0),\sigma^{2l}(e_0),\dots,\sigma^{(p-1)l}(e_0)$ form an alternating matroid in this order, 
the sequence $W_2(\sigma^l (e_0))$,$\dots$, $W_2(\sigma^{(p-1)l}(e_0))$ must be one of the following forms:
\[ -\dots -+ \dots +, \ 0-\dots -+ \dots +, \ -\dots -0+ \dots +, \ -\dots -+ \dots +0,\]
\[ +\dots +- \dots -, \ 0+\dots +- \dots -, \ +\dots +0- \dots -, \ +\dots +- \dots -0,\]
where $+\dots +$ and $- \dots -$ may be empty (see Proposition \ref{prop:devide} in Appendix 1).
Therefore, the following must hold:
\begin{itemize}
\item $\{ \sigma^l(e_0), \sigma^{2l}(e_0),\dots, \sigma^{kl}(e_0) \} = \{ e_1,\dots,e_k \} \text{ or } = \{ e_{k+1},\dots,e_{p-1} \}$ \\
\text{ (i.e., 
$\{ l, 2l, \dots, kl \} \bmod p = \{ 1,\dots,k\} \text{ or } = \{ k+1,\dots,p-1\}$), and}
\item
$\{ \sigma^{(k+1)l}(e_0), \sigma^{(k+2)l}(e_0),\dots, \sigma^{(p-1)l}(e_0) \} = \{ e_1,\dots,e_k \} \text{ or } = \{ e_{k+1},\dots,e_{p-1} \}$ \\
\text{ (i.e., 
$\{ (k+1)l, (k+2)l, \dots, (p-1)l \} \bmod p = \{ 1,\dots,k\} \text{ or } = \{ k+1,\dots,p-1\}$)}
\end{itemize}
\text{or} 
\begin{itemize}
\item $\{ \sigma^l(e_0), \sigma^{2l}(e_0),\dots, \sigma^{(k+1)l}(e_0) \} = \{ e_1,\dots,e_{k+1} \} \text{ or } = \{ e_{k+2},\dots,e_{p-1} \}$ \\
\text{ (i.e., 
$\{ l, 2l, \dots, (k+1)l \} \bmod p = \{ 1,\dots,k+1\} \text{ or } = \{ k+2,\dots,p-1\}$), and}
\item $\{ \sigma^{(k+2)l}(e_0), \sigma^{(k+3)l}(e_0),\dots, \sigma^{(p-1)l}(e_0) \} = \{ e_1,\dots,e_{k+1} \} \text{ or } = \{ e_{k+2},\dots,e_{p-1} \}$ \\
\text{ (i.e., 
$\{ (k+2)l, (k+3)l, \dots, (p-1)l \} \bmod p = \{ 1,\dots,k+1\} \text{ or } = \{ k+2,\dots,p-1\}$).}
\end{itemize}
It is impossible since we are assuming $2 \leq l \leq p-2$.
Indeed, the case ``$\{ l, 2l, \dots, kl \} \bmod p = \{ 1,\dots,k\}$'' is proved to be impossible as follows.
Suppose that this relation holds.
Let $k_0 \in \mathbb{N}$ be such that $k_0l \equiv 1 \ (\bmod \ p)$ and $1 \leq k_0 \leq k$.
Then, we have $\{ (k_0+1)l, \dots, (k+k_0)l \}  \bmod  p = \{ 2,\dots,k+1\}$.
If $k_0 \geq 2$, there exists $k'\in \mathbb{N}$ such that 
$k+1 \leq k' \leq k+k_0$ and $k'l \bmod p \in \{ 2,\dots,k\}$.
This leads to that there exists  $k''\in \mathbb{N}$ such that $1 \leq k'' \leq k$ and $k'l \equiv k''l \ (\bmod \ p)$, 
and thus that $(k'-k'')l \equiv 0 \ (\bmod \ p)$.
Since $1 \leq k'-k'' \leq k+k_0$, this is a contradiction.
Therefore, we have $l \bmod p \equiv 1$.
However, it is impossible since $2 \leq l \leq p-2$.
This proves that the case ``$\{ l, 2l, \dots, kl \} \bmod p = \{ 1,\dots,k\}$'' never happens.
The other cases are also proved to be impossible in the same way.
\\
\\
Now, let us take a look at structure of ${\cal N}$ if there exists $\tau \in G({\cal M})$ such that $y = \tau (x)$
under the conclusion that ${\cal N}$ or $_{-Y}{\cal N}$ is acyclic.
If ${\cal N}$ is acyclic, there exists an extreme point $e$, i.e., there exists a covector $V$ of ${\cal M}$ such that $V(e)=0$ and $V(f)=+$ for all $f \in X \cup Y \setminus \{ e \}$.
Let us consider the covectors $\sigma^i \tau^j (V)$ for $i=0,\dots,p-1$ and $j=0,1$ if $e \in X$ and
the covectors $\sigma^i \tau^j (V)$ for $i=0,\dots,p-1$ and $j=0,-1$ if $e \in Y$.
Then, the oriented matroid ${\cal N}$ turns out to be a matroid polytope of rank $3$, i.e.,
a relabeling of the alternating matroid $A_{3,2|X|}$ (Proposition \ref{prop:rank_relabeling} in Appendix 1).
If $_{-Y}{\cal N}$ is acyclic,  there exists a covector $V'$ of ${\cal M}$ such that $V'(e)=0$ and $V'(f)=+$ for all $f \in X \setminus \{ e \}$, and $V'(g)=-$ for all $g \in Y$,
or there exists a covector $V''$ of ${\cal M}$ such that $V''(e)=0$ and $V''(f)=+$ for all $f \in X$, and $V'(g)=-$ for all $g \in Y \setminus \{ e\}$.
Applying $\sigma^i \tau^j$ to $V'$ for $i=0,\dots,p-1$ and $j=0,1$ 
or to $V''$ for $i=0,\dots,p-1$ and $j=0,-1$, 
the reorientation $_{-Y}{\cal N}$ turns out to be a matroid polytope of rank $3$, i.e., a relabeling of the alternating matroid $A_{3,2|X|}$.

Finally, we see that $\sigma$ acts on ${\cal N}$ or $_{-Y}{\cal N}$ as the 2nd rotational symmetry.
If ${\cal N} \simeq A_{3,2|X|}$, then we have $y \in V_{k-1}^+ \cap V_k^- \cap V_{k+1}^+$ for some $k \in \mathbb{Z}$ with $0 \leq k \leq p-1$ (, where $V_{-1}$ is interpreted as $V_{p-1}$).
Since $|Y|=p$ and $|V_{k-1}^+ \cap V_k^- \cap V_{k+1}^+| = |V_{l-1}^+ \cap V_l^- \cap V_{l+1}^+|$ for any $k,l \in \mathbb{Z}$ with $0 \leq k,l \leq p-1$,
we have  $|V_{k-1}^+ \cap V_k^- \cap V_{k+1}^+| = 1$ for all $k \in \mathbb{Z}$ with $0 \leq k \leq p-1$.
This implies that $\sigma$ acts on ${\cal N}$ as the 2nd rotational symmetry.
The same argument yields that $\sigma$ acts on $_{-Y}{\cal N}$ as the 2nd rotational symmetry and that ${\cal N} \simeq {_{-\{ 2,4,\dots,2|X|\}}}A_{3,2|X|}$ if $_{-Y}{\cal N} \simeq A_{3,2|X|}$.
\begin{figure}[h]
\begin{center}
\includegraphics[scale=0.32,clip]{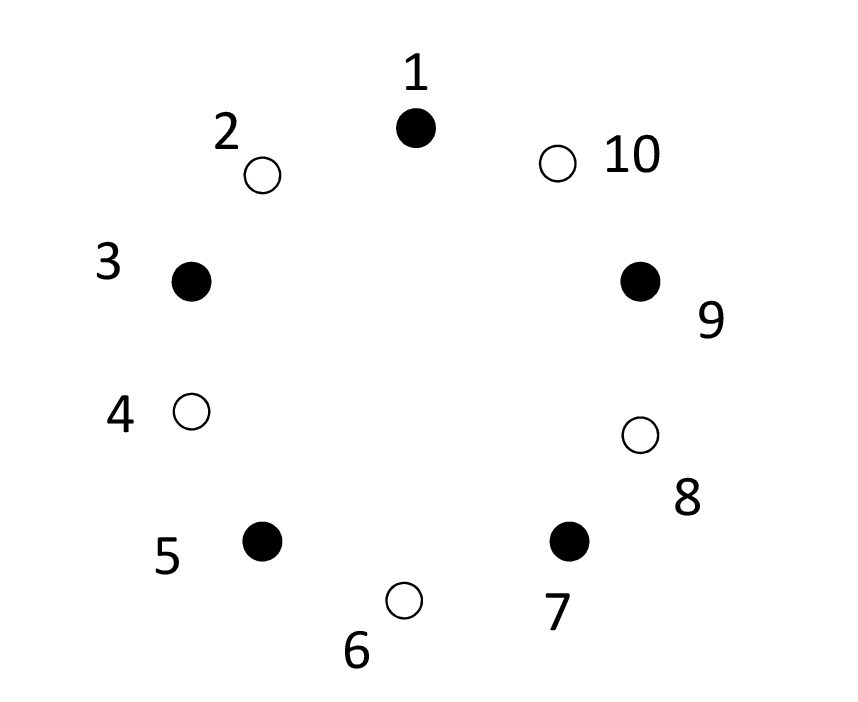}
\includegraphics[scale=0.20,clip]{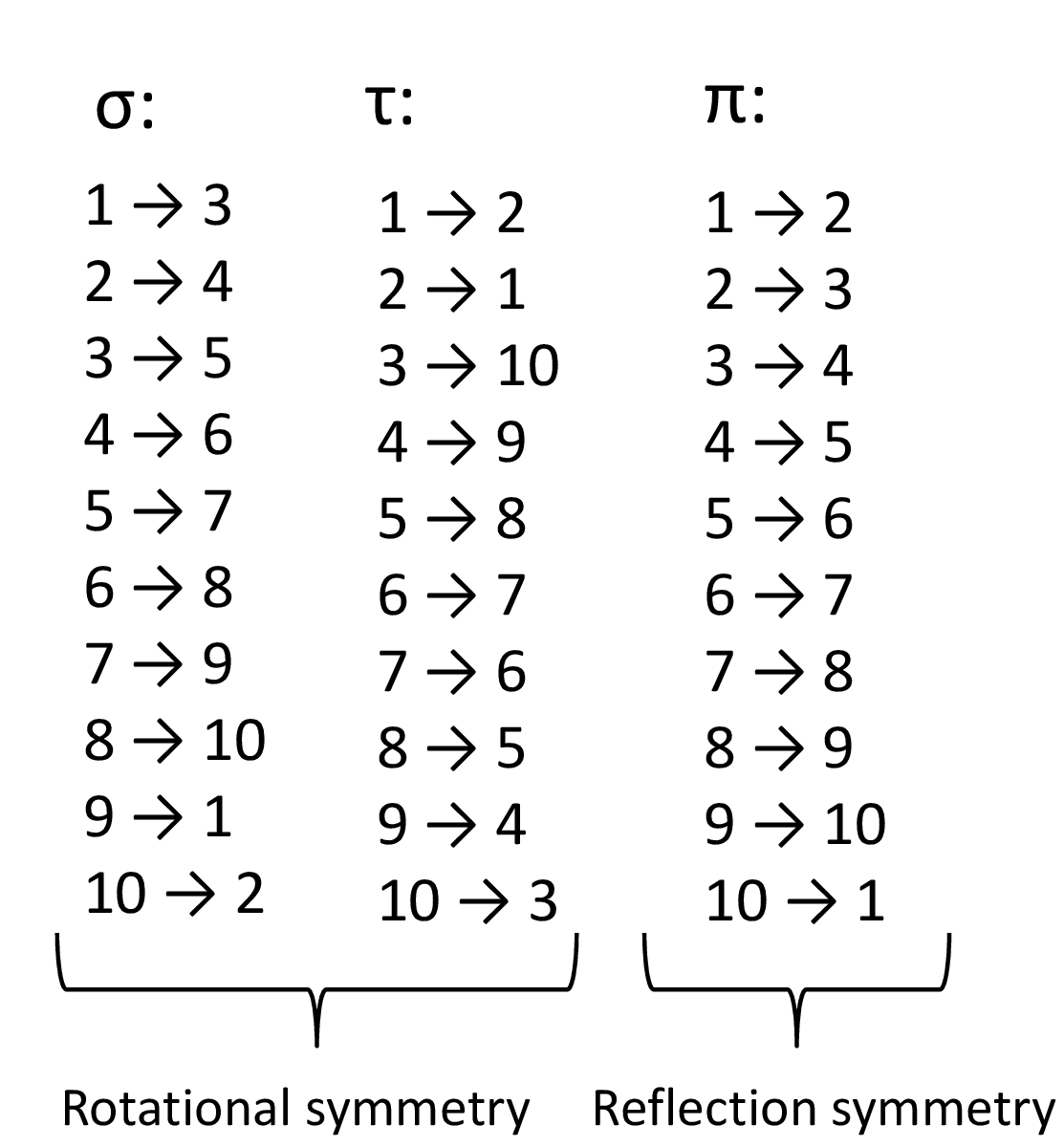}
\end{center}
\caption{The case where $_{-Y}{\cal N}$ is acyclic (represented as a signed point configuration)}
\label{fig:half_acyclic}
\end{figure}
\end{proof}
\\
The proposition is generalized as follows.
\begin{prop}
\label{prop:rank3_transitivity3}
Let ${\cal M}=(E,\{ \chi, -\chi \})$ be a simple oriented matroid of rank $3$ and
$G$ the cyclic group generated by a proper rotational symmetry $\sigma \in R({\cal M})$ of order $p(>2)$ (let us assume that $\sigma$ is the 1st rotational symmetry).
Let  $P_1,\dots,P_a, N_1, \dots, N_b$ be pairwise disjoint rank 3 $G$-orbits (let $P:=\cup_{i=1}^a{P_a}$, $N:=\cup_{i=1}^b{N_b}$ and $c:=a+b$) such that $_{-N}{\cal M}|_{P \cup N} \simeq A_{3,cp}$, 
on which $\sigma$ acts as the $c$-th rotational symmetry.

For a rank 3 $G$-orbit $X$,
if there exists $x \in P \cup N \setminus X$, then we have  $X \cap (P \cup N) = \emptyset$
and $_{-N}{\cal M}|_{P \cup N \cup X}$ or $_{-N \cup X}{\cal M}|_{P \cup N \cup X}$ is acyclic.
If in addition $y = \tau (x)$ for some $x \in P \cup N$, $y \in X$ and $\tau \in G({\cal M})$, then
we have  $_{-N}{\cal M}|_{P \cup N \cup X} \simeq A_{3,(c+1)p}$ or $_{-N \cup X}{\cal M}|_{P \cup N \cup X} \simeq A_{3,(c+1)p}$. 
Here, $\sigma$ is the $(c+1)$-th rotational symmetry of  $_{-N}{\cal M}|_{P \cup N \cup X}$ or $_{-N \cup X}{\cal M}|_{P \cup N \cup X}$.
\end{prop}
\begin{proof}
Let us label the elements of $P \cup N$ by $x_0(:=x),x_1\dots,x_{cp-1}$ so that they form the reorientation $_{-N}A_{3,cp}$ in this order.
Let $V$ be the cocircuit of $_{-N}{\cal M}|_{P \cup N \cup X}$ such that $V(x_0)=V(x_1) = 0$ and $V(e)=+$ for all $e \in P \cup N \setminus \{ x_0, x_1\}$.
\\
\\
{\bf (I)} $V(e)=+$ for all $e \in X$.

Since $V \circ \sigma (V) \circ \dots \circ \sigma^{p-1}(V)$ is the positive covector, the oriented matroid $_{-N}{\cal M}|_{P \cup N \cup Y}$ is acyclic.
\\
\\
{\bf (II)} $V(e_0)=-$ for some $e_0 \in Y$.

Let us label $e_1:= \sigma (e_0),\dots,e_{p-1}:=\sigma^{p-1}(e_0)$.
Note that $x_c=\sigma(x),x_{2c}=\sigma^2(x),\dots,x_{(p-1)c}=\sigma^{p-1}(x)$.
Let $V_k$ be the cocircuit of $_{-N}{\cal M}|_{P \cup N \cup X}$ such that
\[ V_k(x_k)=V_k(x_{k+1})=0, V_k(e)=+ \text{ for all $e \in P \cup N \setminus \{ x_k,x_{k+1} \}$,} \]
for $k=1,\dots,pc-1$, where $x_{pc}:=x_0$.
Then there exist $l_1,l_2 \in \mathbb{Z}$ with $0 \leq l_2 < l_1 \leq pc-1$ such that
\begin{align}
\label{first_part}
\begin{split}
&V_{l_1}(e_0)=V_{l_1+1}(e_0)=V_{pc-1}(e_0)=V_0(e_0)=V_1(e_0)=\dots =V_{l_2-1}(e_0)=+,\\
&V_{l_2}(e_0)=V_{l_2+1}(e_0)=\dots = V_{l_1-1}(e_0)=-
\end{split}
\end{align}
or
\begin{align}
\label{latter_part}
\begin{split}
&V_{l_1}(e_0)=V_{l_1+1}(e_0)=V_{pc-1}(e_0)=V_0(e_0)=V_1(e_0)=\dots =V_{l_2-1}(e_0)=-,\\
&V_{l_2}(e_0)=V_{l_2+1}(e_0)=\dots = V_{l_1-1}(e_0)=+
\end{split}
\end{align}
by Proposition \ref{prop:alter_ordering} in Appendix 1. 
Let us first consider Case (\ref{first_part}).
Without loss of generality, we can assume that $0 \leq l_2 \leq c-1$.
For $k=1,\dots,p-1$, let us apply $\sigma^k$. Then, we obtain
\begin{align*}
\begin{split}
&V_{l_1+kc}(e_k)=V_{l_1+kc+1}(e_k)=V_{pc-1}(e_k)=V_0(e_k)=V_1(e_k)=\dots =V_{l_2+kc-1}(e_k)=+,\\
&V_{l_2+kc}(e_k)=V_{l_2+kc+1}(e_k)=\dots = V_{l_1+kc-1}(e_k)=-
\end{split}
\end{align*}
if $kc+l_1 < pc$.
On the other hand, we have
\begin{align*}
\begin{split}
&V_{l_1+kc-pc}(e_k)=V_{l_1+kc-pc+1}(e_k)=\dots =V_{l_2+kc-1}(e_k)=+,\\
&V_0(e_k)=V_1(e_k)=\dots = V_{l_1+kc-pc-1}(e_k)=-, V_{l_2+kc}(e_k)=V_{l_2+kc+1}(e_k)=\dots =V_{pc-1}(e_k)=-
\end{split}
\end{align*}
if $kc+l_1 \geq pc$.
Therefore, the following holds:
\begin{align*} 
\begin{split}
&V_{l_2-1}(e_0)=\dots = V_{l_2-1}(e_{p-\lceil \frac{l_1}{c} \rceil})=+, \ V_{l_2-1}(e_{p-\lceil \frac{l_1}{c} \rceil+1})=\dots =V_{l_2-1}(e_{p-1})=-, \\
&V_{l_2}(e_0)=-, \ V_{l_2}(e_1)=\dots = V_{l_2}(e_{p-\lceil \frac{l_1}{c} \rceil+1})=+, \ V_{l_2}(e_{p-\lceil \frac{l_1}{c} \rceil+2})=\dots =V_{l_2}(e_{p-1})=-,
\end{split}
\end{align*}
and
\begin{align*}
\begin{split}
&V_{l_3}(e_0)=\dots = V_{l_3}(e_{\lceil \frac{l_1}{c} \rceil-2})=-, \ V_{l_3}(e_{\lceil \frac{l_1}{c} \rceil -1})=\dots =V_{l_3}(e_{p-1})=+, \\
&V_{l_3+1}(e_0)=+, \ V_{l_3+1}(e_1)=\dots = V_{l_3+1}(e_{\lceil \frac{l_1}{c} \rceil -1})=-, \ V_{l_3+1}(e_{\lceil \frac{l_1}{c} \rceil})=\dots =V_{l_3+1}(e_{p-1})=+,
\end{split}
\end{align*}
where $l_3:=l_2-1+(\lceil \frac{l_1}{c} \rceil-1)c$.
\begin{figure}[h]
\begin{center}
\includegraphics[scale=0.28,clip]{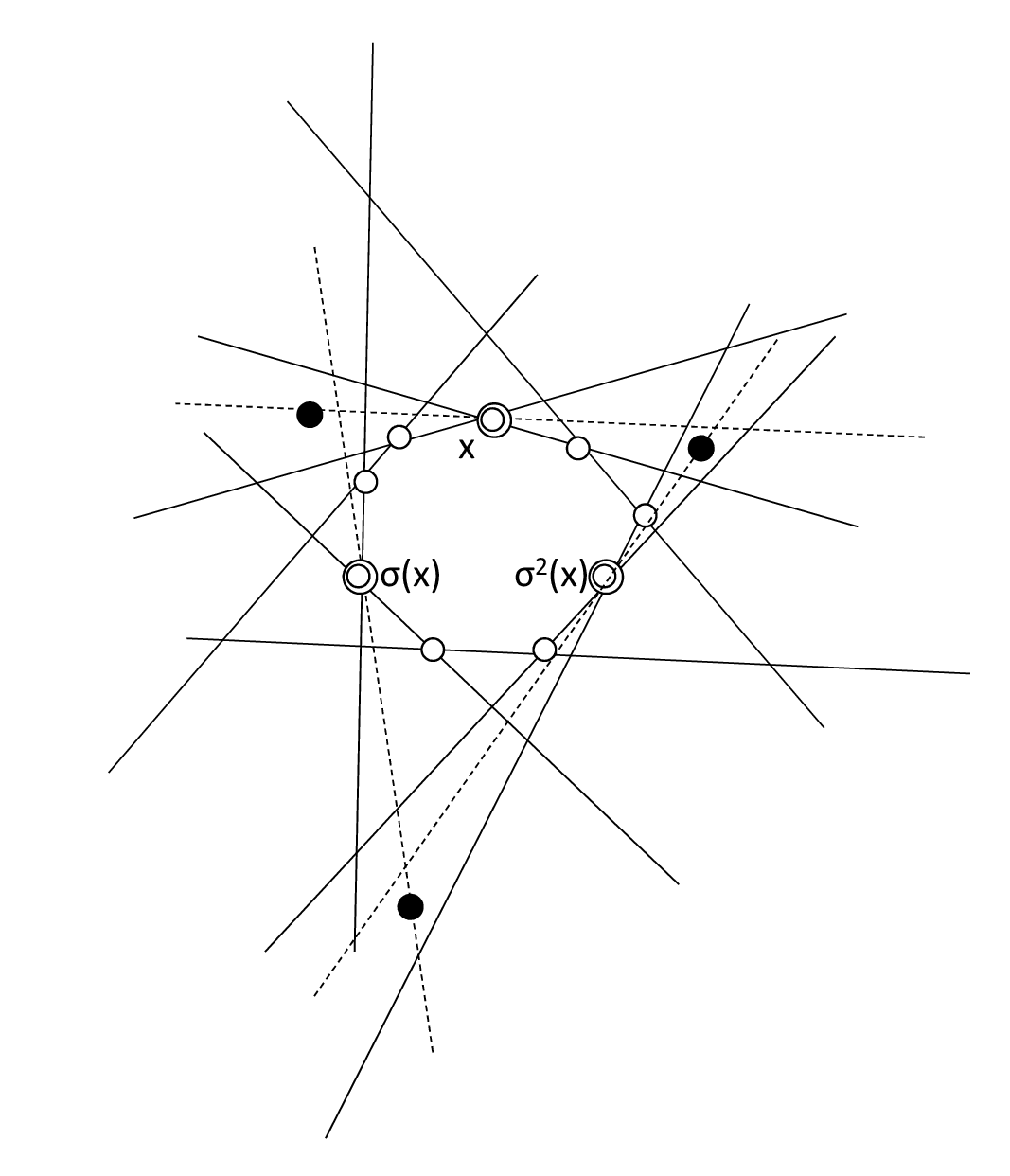}
\includegraphics[scale=0.28,clip]{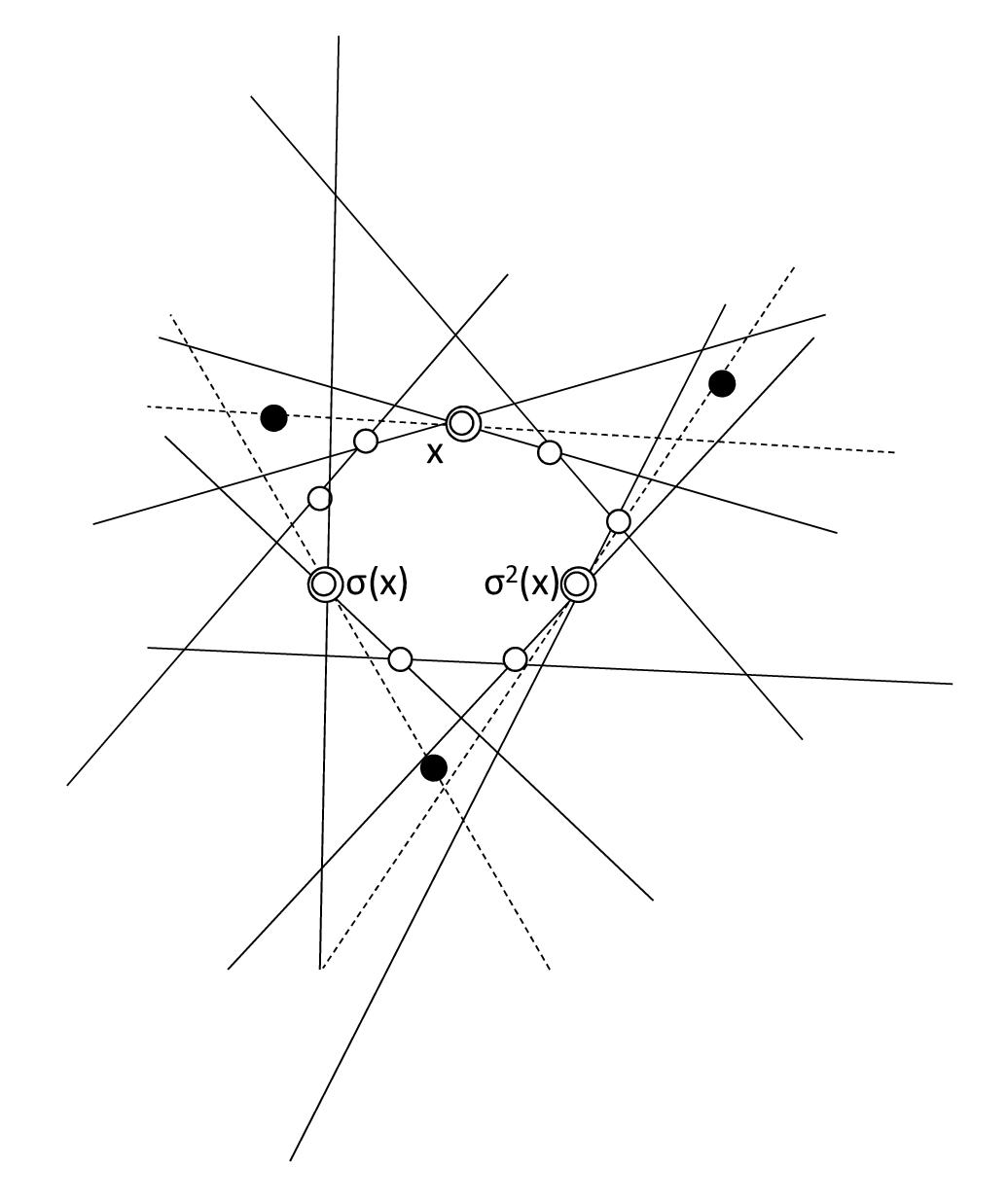}
\includegraphics[scale=0.28,clip]{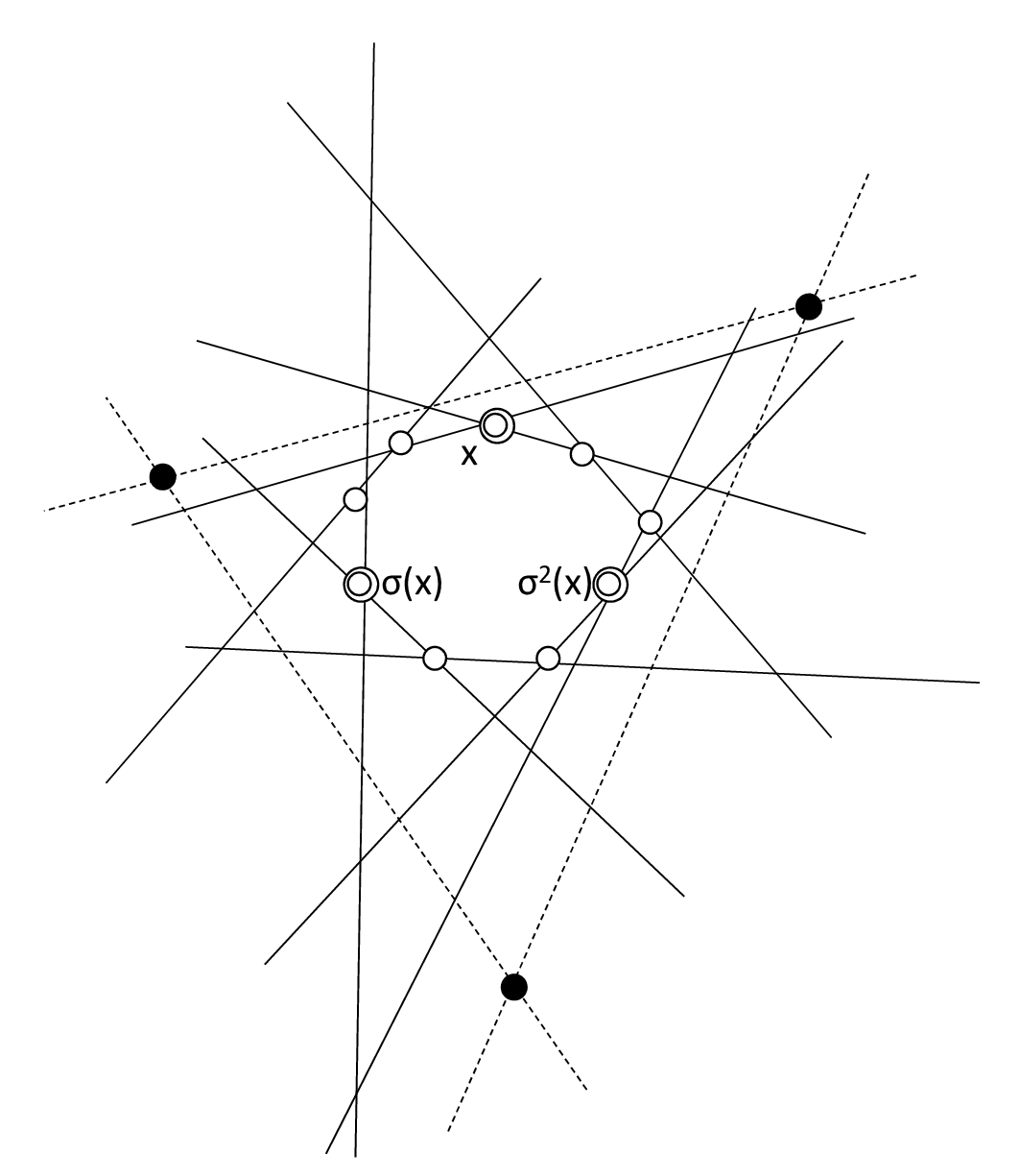}
\caption{$_{-N}{\cal M}|_{P \cup N \cup X}$ (depicted as a signed point configuration. $p=q=3$)}
\end{center}
\end{figure}
\\
(a) $l_1 \leq c$.

The oriented matroid ${\cal M}|_{P \cup N \cup X}$ is acyclic because of the positive covector $V_{l_2} \circ \sigma (V_{l_2}) \circ \dots \circ \sigma^{p-1}(V_{l_2})$.
\\ \\
(b) $l_1 > c$.

Let us apply vector elimination to $V_{l_2-1},V_{l_2}$ and $e_0$ (resp. $V_{l_3},V_{l_3+1}$ and $e_0$), and 
obtain a covector $W_1$ (resp. $W_2$) such that
\begin{align*}
&W_1(e_0)=W_1(x_{l_2})=0, W_1(e)=+ \text{ for all $e \in P \cup N \setminus \{ x_{l_2}\}$}.\\
&W_2(e_0)=W_2(x_{l_3+1})=0, W_2(e)=+ \text{ for all $e \in P \cup N \setminus \{ x_{l_3+1}\}$}.
\end{align*}
By the same argument as the proof of Proposition \ref{prop:rank3_transitivity2},
the oriented matroid $_{-N}{\cal M}|_{P \cup N \cup Y}$ or $_{-N \cup X}{\cal M}|_{P \cup N \cup X}$ is proved to be acyclic.
The same argument applies to Case (\ref{latter_part}).
By the same argument as the proof of Proposition \ref{prop:rank3_transitivity2},
if $y = \tau (x)$ for some $x \in P \cup N$, $y \in X$  and $\tau \in G({\cal M})$, then
we have  $_{-N}{\cal M}|_{P \cup N \cup X} \simeq A_{3,(c+1)p}$ or $_{-N \cup X}{\cal M}|_{P \cup N \cup X} \simeq A_{3,(c+1)p}$. 
Here, $\sigma$ is the $(c+1)$-th rotational symmetry of  $_{-N}{\cal M}|_{P \cup N \cup X}$ or $_{-N \cup X}{\cal M}|_{P \cup N \cup X}$.
\end{proof}

\begin{prop}
\label{prop:insep}
Let ${\cal M}=(E,\{ \chi, -\chi \})$ ($n:=|E|$) be a simple oriented matroid of rank $3$ with a proper rotational symmetry $\sigma$ of order $p > 2$.
Suppose that $R({\cal M})$ acts transitively on $E$.
Then, we have ${\cal M} \simeq A_{3,n}$ or ${\cal M} \simeq {_{-[n] \cap 2 \mathbb{N}}A_{3,n}}$ (only when $n$ is even).
Therefore, we have one of $R({\cal M}) \simeq D_n$ ($n$: even), $R({\cal M}) \simeq \mathbb{Z}_n$ and $R({\cal M}) \simeq A_4$.
\end{prop}
\begin{proof}
Let us first consider the case that $\sigma$ does not have a fixed point for any $\sigma \in R({\cal M}) \setminus \{ \id \}$.
Let  $G$ be the cyclic group generated by $\sigma$ and take a rank $3$ $G$-orbit $X_0:= G \cdot x_0 \subseteq E$.
For $i=1,2,\dots,$ we continue to take $X_i := \cup_{k=0}^i{G \cdot x_k}$ while there exists $x_i \in E \setminus X_i$.
By Proposition \ref{prop:rank3_transitivity3}, ${\cal M}|_{X_i}$ is a reorientation of an alternating matroid of rank $3$ for any $i=1,2,\dots$.
This leads to that ${\cal M}$ is a reorientation of an alternating matroid of rank $3$.
Therefore, $IG({\cal M})$ is an $n$-cycle or the complete graph $K_4$.
If $IG({\cal M})$ is the complete graph $K_4$, an easy case analysis yields that ${\cal M} \simeq {_{-\{ 2,4\}}A_{3,4}}$ or that ${\cal M} \simeq A_{3,4}$.
Since $R({_{-\{ 2,4\}}A_{3,4}}) (\simeq A_4)$ has a fixed point, it must hold that ${\cal M} \simeq A_{3,4}$ and thus that $R({\cal M}) \simeq D_8$.
In the case that $IG({\cal M})$ is an $n$-cycle,
since the automorphism group of an $n$-cycle is the dihedral group $D_{2n}$, $R({\cal M})$ must be a subgroup of $D_{2n}$.
By Cavior's theorem, $R({\cal M})$ is either a cyclic group or a dihedral group.
If $R({\cal M})$ is a dihedral group, we have $R({\cal M})= H \cup \tau H$ for some cyclic group $H \subseteq R({\cal M})$ and $\tau \in R({\cal M})$ of order $2$.
Let $m:=|H|$.
Since $R({\cal M})$ contains an element of order $p>2$, we have $m > 2$.
By Proposition \ref{prop:rank3_transitivity2}, it holds that ${\cal M} \simeq {_{-[2m] \cap 2 \mathbb{N}}A_{3,2m}}$. 
This leads to that $m= \frac{n}{2}$ and that $R({\cal M}) \simeq D_n$.
If $R({\cal M})$ is a cyclic group, i.e., $R({\cal M}) \simeq \mathbb{Z}_n$, then we have ${\cal M} \simeq A_{3,n}$ by Proposition \ref{rank3_transitivity1}.

Suppose now that $\sigma_0 \in R({\cal M})$ has a fixed point $q$ and let $G$ be the cyclic group generated  by $\sigma_0$.
Let us consider $x \in E \setminus \{ q \}$ and take $y \in E \setminus (G \cdot x \cup \{ q \})$. 
Then, we have ${\rm rank}(G \cdot y)=3$ (there are not two fixed points in a simple oriented matroid of rank $3$ by Theorem \ref{prop:uniqueness}).
By Proposition \ref{prop:rank3_transitivity2}, ${\cal M}|_{G \cdot x \cup G \cdot y}$ is a reorientation of an alternating matroid.
Continuing this discussion, we have ${\cal M} = {\cal N} \cup q$, where ${\cal N}$ is a reorientation of an alternating matroid and $q$ is a fixed point of $\sigma$.
Because of transitivity, ${\cal M}|_F$ is a reorientation of $A_{3,n-1}$ for any $F \subseteq E$ with $|F|=n-1$.
Since the reorientation class of ${\cal M}$ is uniquely determined by the list of the reorientation classes of ${\cal M}|_F$ for $F \subseteq E$ with 
$|F|=3+2$ (see \cite[Theorem 3.1.]{R88}),
${\cal M}$ is reorientation equivalent to $A_{3,n}$ if $n \geq 6$ (recall that $(A_{3,n})|_F \simeq A_{3,n-1}$ for any $F \subseteq E$ with $|F|=n-1$).
An easy case analysis leads to that ${\cal M}$ is a reorientation of an alternating matroid regardless of the value of $n$.
Therefore,  the inseparability graph $IG({\cal M})$ is an $n$-cycle if $n > 4$.
We observe that $\sigma_0$ induces a symmetry of $IG({\cal M})$, but an $n$-cycle does not have an automorphism of order $> 2$ with exactly one fixed point.
This is a contradiction.
Therefore, $n=4$ must hold.
An easy case analysis leads to that ${\cal M} \simeq A_{3,4}$ or ${\cal M} \simeq {_{-\{ 2, 4 \}}A_{3,4}}$
and that $R({\cal M}) \simeq \mathbb{Z}_4$ or $R({\cal M}) \simeq A_4$.
\end{proof}

Therefore, the rotational symmetry group of a simple oriented matroid ${\cal M}$ of rank $3$ with a proper rotational symmetry of order $p>2$ is a subgroup of one of $\mathbb{Z}_n$ ($n \geq 3$),  $D_{2n}$ ($n \geq 3$) and $A_4$.
By Cavior's theorem, it is same to say that the rotational symmetry group is 
isomorphic to one of $\mathbb{Z}_n$ ($n \geq 3$),  $D_{2n}$ ($n \geq 3$) and $A_4$.

\begin{prop}
Let ${\cal M}$ be a simple oriented matroid of rank $3$ with a proper rotational symmetry of order $p>2$.
An $R({\cal M})$-orbit is isomorphic to one of $A_{1,1}$, $A_{3,n}$ ($n \geq 3$) and $_{-[2n] \cap 2 \mathbb{N}}A_{3,2n}$ ($n \geq 2$).
\end{prop}

\subsection{Orbit structure when all rotational symmetries have order $1$ or $2$}
\label{sec:proper3}
Finally, let us consider the case where all elements of $R({\cal M})$ have order $1$ or $2$, i.e., $R({\cal M}) \simeq \mathbb{Z}_2^n$ for some $n \geq 0$.
We first prove that  $n \leq 2$ must hold.
\begin{prop}
Let ${\cal M}$ be a simple oriented matroid of rank $3$.
Then, $R({\cal M})$ does not contain a subgroup isomorphic to $\mathbb{Z}_2^3$.
\end{prop}
\begin{proof}
We prove the proposition by contradiction.
Let $\sigma, \tau, \pi \in R({\cal M})$ be generators of a subgroup $G \subseteq R({\cal M})$ that is isomorphic to $\mathbb{Z}_2^3$.
Consider a rank $2$ orbit $X:= \{ x, \sigma (x) \}$, and orbits $Y:= X \cup \tau (X)$ and $Z:= Y \cup \pi (Y)$.
Note that $Y \neq X$. Otherwise, we have $\tau|_X = \id$ or $\tau|_X = \sigma|_X$ and thus $\tau = \id$ or $\tau = \sigma$, which is a contradiction.
Also, note that ${\rm rank}(Y) = 3$ by Propositions \ref{prop:classification_rank2_rotation} and \ref{prop:classification_rank2_full}.
Therefore, it holds that ${\cal M}|_Y \simeq A_{3,4}$ or that ${\cal M}|_Y \simeq {_{-\{ 2, 4 \}}A_{3,4}}$.
A similar discussion to Propositions \ref{prop:rank3_transitivity2} and \ref{prop:rank3_transitivity3}
leads to that ${\cal M}|_Z$ is a reorientation of $A_{3,4}$ or $A_{3,8}$.
Therefore, the inseparability graph $IG({\cal M}|_Z)$ is a $4$-cycle or $8$-cycle.
Their automorphism groups  do not contain a subgroup isomorphic to $\mathbb{Z}_2^3$. 
This is a contradiction.
\end{proof}
\\
The case $n=2$ is realized as follows.
Let $V:=(v_1,\dots,v_5)$ be the vector configuration defined by
\[
\begin{pmatrix}
1 & -1 & -1 & 1 & 0 \\
1 & 1  & -1 & -1 & 0  \\
1 & -1 & 1  & -1 & 1  
\end{pmatrix} 
\]
and ${\cal M}_V$ be the associated oriented matroid.
Its rotational symmetry group is generated by the symmetries $(1 4) (2 3)$ and $(1 2) (3 4)$.
Therefore, we have  $R({\cal M}_V) \simeq \mathbb{Z}_2^2$.
The following proposition describes the orbit structure when $R({\cal M}) \simeq \mathbb{Z}_2^2$.

\begin{prop}
Let ${\cal M}$ be a simple oriented matroid of rank $3$ with $R({\cal M}) \simeq \mathbb{Z}_2^2$.
An $R({\cal M})$-orbit is isomorphic to $A_{1,1}$ or $_{-\{ 2,4\}}A_{3,4}$.
\end{prop}
\begin{proof}
Let $E$ be the ground set of ${\cal M}$ and $\sigma, \tau \in R({\cal M})$ the generators of $R({\cal M})$.
Take an $R({\cal M})$-orbit $X$ with $\rank(X) \geq 2$.
If $|X|=2$, we have $\sigma|_X = \id$ or $\sigma|_X = \tau|_X$ and thus $\sigma = \id$ or $\sigma = \tau$, which is a contradiction.
Since $|X|=3$ is impossible, we have $|X|=4$.
By Propositions \ref{prop:classification_rank2_rotation} and \ref{prop:classification_rank2_full}, it is impossible to have $\rank(X)=2$.
If $\rank(X)=3$, then
it holds that
$\chi (x,\sigma (x),\sigma \tau (x)) = \chi (\tau (x), \sigma \tau (x), \sigma (x)) = \chi (\sigma \tau (x), \tau (x), x) = \chi (\sigma (x), x, \tau (x))$.
It follows that ${\cal M}|_X \simeq {_{-\{ 2,4\}}}A_{3,4}$.
If $\rank(X)=1$, then we have ${\cal M}|_X \simeq A_{1,1}$.
\end{proof}
\\
\\
Since the cases $n=0$ and $n=1$ are also possible,
we can summarize this subsection as the following proposition.
\begin{prop}
Let ${\cal M}$ be a simple oriented matroid of rank $3$.
If $\sigma^2 = \id$ for all $\sigma \in R({\cal M})$, then
$R({\cal M})$ is isomorphic to one of  $\mathbb{Z}_2^2$ ($\simeq D_4$), $\mathbb{Z}_2$  ($\simeq D_2$) and $\{ \id \}$.
An $R({\cal M})$-orbit is isomorphic to one of ${_{-\{ 2,4\}}}A_{3,4}$, $A_{2,2}$ and $A_{1,1}$.
\end{prop}

\subsection{Classification}
Combining the results in this section, we obtain the following theorem.
\begin{thm}
\label{thm:classification_rotation_rank3}
Let ${\cal M}$ be a simple oriented matroid of rank $3$.
Then, $R({\cal M})$ is isomorphic to one of the cyclic group $\mathbb{Z}_n$ ($n \geq 1$), the dihedral group $D_{2n}$ ($n \geq 1$) and the alternating group $A_4$.
An $R({\cal M})$-orbit is isomorphic to one of $A_{1,1}$, $A_{2,2}$, $_{- \{ 2,4,\dots,2p-2\}}A_{2,2p-1}$ ($p \geq 2$), $A_{3,n}$ ($n \geq 3$) and $_{-[2n] \cap 2\mathbb{N}}A_{3,2n}$ ($n \geq 2$).
\end{thm}
Based on Theorem \ref{thm:classification_rotation_rank3}, we now prove the following theorem.
\begin{thm}
\label{thm:classification_full_rank3}
Let ${\cal M}$ be a simple oriented matroid of rank $3$.
Then, $G({\cal M})$ is isomorphic to one of the cyclic group $\mathbb{Z}_n$ ($n \geq 1$), the dihedral group $D_{2n}$ ($n \geq 1$) and the symmetric group $S_4$.
A $G({\cal M})$-orbit is isomorphic to one of $A_{1,1}$, $A_{2,2}$, $_{- \{ 2,4,\dots,2p-2\}}A_{2,2p-1}$ ($p \geq 2$), $A_{3,n}$ ($n \geq 3$) and $_{-[2n] \cap 2\mathbb{N}}A_{3,2n}$ ($n \geq 2$).
\end{thm}
\begin{proof}
Let $E$ be the ground set of ${\cal M}$. 
We proceed by case analysis.
\\
{\bf Case 1.} $G({\cal M}) = R({\cal M})$.

In this case, $G({\cal M})$ is isomorphic to a cyclic group or a dihedral group, or the alternating group $A_4$ by Theorem \ref{thm:classification_rotation_rank3}.
Let us see that the case $G({\cal M}) \simeq A_4$ never happens.
Assume that $R({\cal M}) = G({\cal M}) \simeq A_4$ and consider a rank $3$ orbit $X:= R({\cal M}) \cdot x$.
Take $\sigma \in R({\cal M})$ of order $3$ and let $G$ be the cyclic group generated by $\sigma$.
Then, we have $X = X_1 \cup \dots \cup X_m \cup \{ q \}$ for some rank $3$ $G$-orbits $X_1,\dots,X_m$, where $q$ is the fixed point of $\sigma$
(recall the proof of Proposition \ref{prop:insep}).
By Proposition \ref{prop:rank3_transitivity3}, ${\cal M}|_{X_1 \cup \dots \cup X_m}$ is a reorientation of $A_{3,3m}$.
Similarly to the proof of Proposition \ref{prop:insep},  $m=1$ must hold.
Then, we have ${\cal M}|_X \simeq {_{-\{ 2,4 \}}A_{3,4}}$. This leads to $G({\cal M}|_X) \simeq S_4$ and thus to $E \neq X$.
Take $y \in E \setminus X$ and let $Y := R({\cal M}) \cdot y$ (note that $X \cap Y = \emptyset$).
Similarly to the proof of Proposition \ref{prop:insep}, $\sigma$ has a fixed point in $Y$.
Because ${\cal M}$ is simple, this implies that $\rank (\Fix(\sigma)) \geq 2$, which is a contradiction.
Therefore, $G({\cal M})$ must be a cyclic group or a dihedral group in Case 1.
Since $G({\cal M})=R({\cal M})$, a $G({\cal M})$-orbit is isomorphic to one of
$A_{1,1}$, $A_{2,2}$, $_{- \{ 2,4,\dots,2p-2\}}A_{2,2p-1}$ ($p \geq 2$), $A_{3,n}$ ($n \geq 3$) and $_{-[2n] \cap 2\mathbb{N}}A_{3,2n}$ ($n \geq 2$).
(Here, we do not prove that each group or orbit indeed appears since it does appear in Case 2.)
\\
\\
{\bf Case 2.} There exists $\sigma \in G({\cal M}) \setminus R({\cal M})$.

Take a reflection symmetry $\tau \in  G({\cal M}) \setminus R({\cal M})$ arbitrarily.
Then, we have  $\sigma \tau \in R({\cal M})$, which means that $\tau \in \sigma^{-1}R({\cal M}) = \sigma R({\cal M})$.
It follows that $G({\cal M}) = R({\cal M}) \cup \sigma R({\cal M})$.
\\
{\bf Case 2-(i).} $R({\cal M})$ contains non-proper rotational symmetries.

In this case, we have ${\cal M}={\cal N} \cup q$ for some oriented matroid ${\cal N}$ of rank $2$ and a coloop $q$.
Since  $R({\cal M})$ contains non-proper rotational symmetries, we have $G({\cal M})=G({\cal N})$ and thus $G({\cal M})$ is a dihedral group.
By Proposition \ref{prop:classification_rank2_full}, a $G({\cal M})$-orbit is isomorphic to one of
 $A_{1,1}$, $A_{2,2}$, $_{- \{ 2,4,\dots,2p-2\}}A_{2,2p-1}$ ($p \geq 2$).
\\
{\bf Case 2-(ii).} $R({\cal M})$ has a proper rotational symmetry of  order $p > 2$.

In this case, there exists $x \in E$ such that $\rank(G \cdot x)=3$ for the cyclic group generated by $\sigma$, and 
${\cal M}|_{G({\cal M}) \cdot x}$ is reorientation equivalent to an alternating martroid of rank $3$ by Proposition \ref{prop:rank3_transitivity3}.
Therefore, $IG({\cal M}|_{G({\cal M}) \cdot x})$ is a cycle or the complete graph $K_4$.
If $IG({\cal M}|_{G({\cal M}) \cdot x})$ is the complete graph $K_4$, we have $|G({\cal M}) \cdot x| = 4$ and an easy case analysis yields that 
${\cal M}|_{G({\cal M}) \cdot x} \simeq A_{3,4}$ or that ${\cal M}|_{G({\cal M}) \cdot x} \simeq {_{- \{ 2,4 \}}A_{3,4}}$.
Therefore, we have $G({\cal M}) \simeq D_8$ or $G({\cal M}) \simeq S_4$.
If $IG({\cal M}|_{G({\cal M}) \cdot x})$ is a cycle, $G({\cal M})$ is a subgroup of a cyclic group or a dihedral group.
By Proposition \ref{rank3_transitivity1} (and the remark that follows it), an orbit under a cyclic group is isomorphic to one of
$A_{3,n}$ ($n \geq 3$) and $_{-[2n] \cap 2\mathbb{N}}A_{3,2n}$ ($n \geq 2$) in Case 2-(ii). 
By Proposition \ref{prop:rank3_transitivity3}, an orbit under a dihedral group is isomorphic to one of
$A_{3,n}$ ($n \geq 3$) and $_{-[2n] \cap 2\mathbb{N}}A_{3,2n}$ ($n \geq 2$) in Case 2-(ii). 
\\
{\bf Case 2-(iii).} $R({\cal M}) \simeq \mathbb{Z}_2^2$.

In this case, it holds that $G({\cal M}) \simeq \mathbb{Z}_2^3$ or $G({\cal M}) \simeq \mathbb{Z}_4 \times  \mathbb{Z}_2$
or  $G({\cal M}) \simeq D_8$.
Let us see that the cases $G({\cal M}) \simeq \mathbb{Z}_2^3$ or $G({\cal M}) \simeq \mathbb{Z}_4 \times  \mathbb{Z}_2$ never happens.
Let $X := R({\cal M}) \cdot x$ be an orbit such that ${\cal M}|_X \simeq {_{-\{ 2,4\}}A_{3,4}}$
and consider $Y := G({\cal M}) \cdot x$.
Then, ${\cal M}|_Y$ must be reorientation equivalent to $A_{3,4}$ or $A_{3,8}$ (similarly to Proposition \ref{prop:rank3_transitivity3}), and thus $IG({\cal M}|_Y)$ must be
the complete graph $K_4$ or the $8$-cycle.
However, their automorphism groups do not contain subgroups isomorphic to $\mathbb{Z}_2^3$ or $\mathbb{Z}_4 \times  \mathbb{Z}_2$.
This is a contradiction.
Therefore, only the case $G({\cal M}) \simeq D_8$ is possible.
In this case we have ${\cal M}|_Y \simeq {_{-\{ 2,4\}}A_{3,4}}$ or ${\cal M}|_Y \simeq {_{-\{ 2,4,6,8\}}A_{3,8}}$.
Therefore, a $G({\cal M})$-orbit is isomorphic to one of 
$A_{1,1}$, $_{-\{ 2,4 \}}A_{3,4}$ and ${_{-\{ 2,4,6,8\}}A_{3,8}}$ in Case 2-(iii).
\\
{\bf Case 2-(iv).} $R({\cal M}) \simeq \mathbb{Z}_2$.

In this case, we must have $G({\cal M}) \simeq \mathbb{Z}_2^2$ or $G({\cal M}) \simeq \mathbb{Z}_4$.
In the case $G({\cal M}) \simeq \mathbb{Z}_2^2$, an orbit is isomorphic to  $A_{1,1}$, $A_{2,2}$ (recall the situation of Figure \ref{fig:order2}) or $A_{3,4}$.
In the case $G({\cal M}) \simeq \mathbb{Z}_4$,  an orbit is isomorphic to  $A_{1,1}$ or $_{-\{ 2,4 \}}A_{3,4}$.

Hence, in Case 2, we have one of  $G({\cal M}) \simeq \mathbb{Z}_n$ ($n \geq 2$), $G({\cal M}) \simeq \mathbb{Z}_n$ ($n \geq 2$) and  $G({\cal M}) \simeq S_4$ 
and a $G({\cal M})$-orbit is isomorphic to one of $A_{1,1}$, $A_{2,2}$, $A_{3,n}$ ($n \geq 3$) and $_{-[2n] \cap 2\mathbb{N}}A_{3,2n}$ ($n \geq 2$).
\\
\\
Combining the results in Cases 1 and 2, we have one of $G({\cal M}) \simeq \mathbb{Z}_n$ ($n \geq 1$), $G({\cal M}) \simeq D_{2n}$ ($n \geq 1$) and  $G({\cal M}) \simeq S_4$.
A $G({\cal M})$-orbit is isomorphic to one of $A_{1,1}$, $A_{2,2}$, $_{- \{ 2,4,\dots,2p-2\}}A_{2,2p-1}$ ($p \geq 2$), $A_{3,n}$ ($n \geq 3$) and $_{-[2n] \cap 2\mathbb{N}}A_{3,2n}$ ($n \geq 2$).
\end{proof}

For each $n \geq 3$, the cyclic group $\mathbb{Z}_n$ indeed appears as a (full) symmetry group.
For $k=0,1,\dots,n-1$, let $a_k^{(n)}:=  ( {\rm cos}(\frac{2k\pi}{n}), {\rm sin}(\frac{2k\pi}{n}))$
and $b_k:= \frac{1}{3}a^{(n)}_k+\frac{2}{3}a^{(n)}_{k+1}$ (where $a_{n}^{(n)}$ is interpreted as $a_0^{(n)}$).
Then, consider the point configuration $Q_n := (a_0^{(n)}, \dots, a_{n-1}^{(n)},  b_0^{(n)}, \dots, b_{n-1}^{(n)})$
and the associated oriented matroid ${\cal M}_{Q_n}$. Then, we have $G({\cal M}_{Q_n}) \simeq \mathbb{Z}_n$.
The dihedral group $D_{2n}$ also appears as $G(A_{3,n}) \simeq D_{2n}$.
Since $G(_{- \{ 2,4 \}}A_{3,4}) \simeq S_4$, the symmetric group $S_4$ is also the symmetry group of a simple oriented matroid of rank $3$.
This completes the classification of symmetry groups of simple oriented matroids of rank $3$.

\section{FPA rotational symmetry groups of simple acyclic oriented matroids of rank $4$}
\label{section:rank4}
In this section, we classify FPA rotational symmetry groups of simple acyclic oriented matroids of rank $4$.
The classification will be done, taking an idea from the classical approach to the classification of finite subgroups of the special orthogonal group $SO(3)$,
which we briefly review here. For more details, see \cite{S94}.

Let $G \subseteq SO(3)$ be a rotational symmetry group of a $3$-dimensional point configuration.
Each element $g \in G \setminus \{ \id \}$ fixes exactly two points, denoted by $c_{\sigma}$ and $-c_{\sigma}$, on the unit sphere $S^2$.
Let $S:=\{ c_{\sigma} \mid \sigma \in G\} \cup \{ -c_{\sigma} \mid \sigma \in G\}$.
Then $G$ acts on $S$.
If we denote by $r$ the number of  $G$-orbits in $S$,
we have
\begin{align*} 
r = \frac{1}{|G|}\sum_{\sigma  \in G}{f(\sigma)}
\end{align*}
by the Cauchy-Frobenius lemma, 
where $f(\sigma )$ is the number of fixed points of $\sigma$ in $S$.
If $\sigma = \id$, then $f(\sigma ) = |S|$ and otherwise $f(\sigma ) = 2$.
Therefore, we have
\begin{align*} 
r = \frac{1}{|G|}\{ |S| + 2(|G|-1) \}. 
\end{align*}
Let $S_1,\dots,S_r$ be the partition of $S$ induced by the action of $G$.
For $x_i \in S_i$, let $G_i$ be the stabilizer subgroup of $x_i$.
Then $|S_i|=\frac{|G|}{|G_i|}$.
Note that $|G_i| \geq 2$ and thus
\begin{align*} 
r = 2+\frac{1}{|G|}(|S_1|+ \dots + |S_r|-2) = 2 - \frac{2}{|G|} + (\frac{1}{|G_1|} + \dots + \frac{1}{|G_r|}) \leq 2 -\frac{2}{|G|} + \frac{r}{2}. 
\end{align*}
This leads to $2 \leq r \leq 3$.
If $r=2$, then $G$ is a cyclic group.
If $r=3$,  possible types of $(|G_1|,|G_2|,|G_3|,|G|)$ ($|G_1| \geq |G_2| \geq |G_3|$) are by further analysis classified into
\begin{align*} 
(n,2,2,2n) \ (n \geq 2), (3,3,2,12), (4,3,2,24), (5,3,2,60).
\end{align*}
This finally leads to that $G$ is isomorphic to one of  $\mathbb{Z}_n$ $(n \geq 1)$, $D_{2n}$ $(n \geq 1)$, $S_4$, $A_4$ and $A_5$.
\\
\\
Following the above discussion, a natural approach to classify rotational symmetry groups of simple acyclic oriented matroids of rank $4$
would be to construct, for each oriented matroid ${\cal M}$, a suitable set $S$ such that
$R({\cal M})$ acts on $S$ and each $\sigma \in R({\cal M}) \setminus \{ \id \}$ fixes exactly two elements of $S$.
For a FPA rotational symmetry group $R_f({\cal M})$, we will construct a desired set $S$ using the orbits under maximal cyclic subgroups of $R_f({\cal M})$.
In Section \ref{subsec:cond_cyclic}, we study the condition when $R_f({\cal M})$ is a cyclic group.
Then, in Section \ref{subsec:suitable}, we construct a set $S$ with the desired property, based on the orbits under cyclic subgroups of $R_f({\cal M})$.
Finally, the classification of FPA rotational symmetry groups is performed in Section \ref{subsec:classification}.
 
\subsection{Condition for a FPA subgroup of $R({\cal M})$ to be a cyclic group}
\label{subsec:cond_cyclic}
First of all, let us investigate,  as the simplest case, a condition that a FPA subgroup of $R({\cal M})$ is a cyclic group.
\begin{prop}
\label{prop:cyclic}
Let ${\cal M}=(E,\{ \chi, -\chi \})$ be a simple acyclic oriented matroid of rank $4$.
A FPA subgroup $G \subseteq R({\cal M})$ with $|G| > 2$ is a cyclic group if and only if
$\rank({\cal M}|_{G \cdot x}) = 3$  or  $\rank({\cal M}|_{G \cdot x}) = 1$ for all $x \in E$. 
\end{prop}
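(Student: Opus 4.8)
The plan is to prove both directions from two facts already in hand: Proposition~\ref{rank3_transitivity1}, which says that a rank-$3$ orbit of a simple oriented matroid carries (a relabeling of) the alternating matroid $A_{3,m}$, whose rotational symmetry group is the cyclic group $C_m$; and Corollary~\ref{cor:uniqueness}, which says that a rotational symmetry of a rank-$r$ oriented matroid is determined by its restriction to any subset of rank $\geq r-1$. The key consequence I will use repeatedly is: if $X \subseteq E$ is $G$-invariant with $rank({\cal M}|_X) \geq r-1 = 3$, then the restriction homomorphism $G \to \mathrm{Sym}(X)$ is injective, since an element of its kernel is a rotational symmetry fixing $X$ pointwise, hence trivial by Corollary~\ref{cor:uniqueness}.

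For the ``only if'' direction, suppose $G$ is cyclic, generated by $\tau$, and assume for contradiction that $O := G\cdot x$ has $rank({\cal M}|_O) = 4$ for some $x$. Writing $m := |O|$, the power $\tau^m$ fixes $O$ pointwise; since $rank({\cal M}|_O) = 4 \geq 3$, Corollary~\ref{cor:uniqueness} forces $\tau^m = e$, so $G$ acts regularly on $O$ and $|G| = m$. Because ${\cal M}|_O$ is acyclic (a restriction of an acyclic matroid) and $G$ acts transitively, every element of $O$ is an extreme point, so ${\cal M}|_O$ is a rank-$4$ matroid polytope carrying a regular $C_m$-action. Applying Theorem~\ref{thm:existence} to ${\cal M}|_O$ and then contracting the fixed point $q$ produced there, the remark following Theorem~\ref{thm:existence} gives a rank-$3$ oriented matroid $({\cal M}|_O \cup q)/q$ which still carries the transitive $C_m$-action; by Proposition~\ref{rank3_transitivity1} it is a relabeling of $A_{3,m}$. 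It then remains to derive a contradiction from the existence of a regular cyclic symmetry on a rank-$4$ matroid polytope; the route I would pursue is to push the $C_m$-action through the ``$q$-coning'' relation between ${\cal M}|_O\cup q$ and $A_{3,m}$ and extract a sign/parity obstruction from the chirotope values, the combinatorial shadow of the fact that every finite cyclic subgroup of the affine group of $\mathbb{R}^3$ has all of its orbits lying on circles. I expect this to be the main obstacle.

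For the ``if'' direction, assume $rank({\cal M}|_{G\cdot x}) \leq 3$ for all $x$, and look for a $G$-invariant $X$ with $rank({\cal M}|_X) \geq 3$ on which $G$ acts with cyclic image; then the injection $G \hookrightarrow \mathrm{Sym}(X)$ noted above lands in a cyclic group and we are done. If some orbit $O = G\cdot x$ has rank exactly $3$, take $X = O$: ${\cal M}|_O$ is rank-$3$, acyclic and carries the transitive $G$-action, hence equals $A_{3,|O|}$ by Proposition~\ref{rank3_transitivity1}, so $R({\cal M}|_O) = C_{|O|}$; since each $g\in G$ restricts to a rotational symmetry of ${\cal M}|_O$, the image of $G$ in $\mathrm{Sym}(O)$ lies in $C_{|O|}$. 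If instead every orbit has rank $\leq 2$, then every non-singleton orbit has rank exactly $2$, and since a rank-$2$ simple acyclic oriented matroid has no rotational symmetry of order $>2$ (Section~5), a transitive $G$-action forces each such orbit to have size $2$; thus every $g\in G$ is a product of disjoint transpositions, each supported on a single size-$2$ orbit.

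Finally, to finish the ``if'' direction in that last case one has to bound $|G|$: for $g \neq e$ with support $U_g$ (a union of size-$2$ orbits), $g$ fixes $E\setminus U_g$ pointwise, so $rank({\cal M}|_{E\setminus U_g}) \leq 2$ by Corollary~\ref{cor:uniqueness}, and one combines this inequality for a hypothetical non-cyclic pair $g,h$ and their product $gh$ (whose support is $U_g \triangle U_h$, disjoint from $U_g\cap U_h$) with submodularity of the matroid rank function and $rank({\cal M}) = 4$. Making this bookkeeping actually close (it interacts delicately with which products of transpositions can be rotational symmetries, i.e.\ with the parity constraints coming from $4$-element bases), together with establishing the parity obstruction in the forward direction, are the two points where the real work lies; everything else is a direct invocation of Proposition~\ref{rank3_transitivity1}, Theorem~\ref{thm:existence}, and Corollary~\ref{cor:uniqueness}.
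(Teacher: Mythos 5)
Your ``only if'' direction follows the paper's route exactly up to the contraction $({\cal M}|_O\cup q)/q$, but the step you defer is where the whole proof lives, and the intermediate claim you do make there is wrong: Proposition~\ref{rank3_transitivity1} applies only to \emph{simple} rank-$3$ oriented matroids, and the contraction is neither simple nor acyclic. By the choice of $q$ (the majority localization of Theorem~\ref{thm:existence} places $q$ so that the extension is not a matroid polytope), the contraction has no positive covector, so it cannot be a relabeling of $A_{3,m}$, which is acyclic. The paper uses Proposition~\ref{rank3_transitivity1} contrapositively: since the contraction carries a transitive cyclic action but is non-acyclic, it must be non-simple, and convexity of ${\cal M}|_O$ forbids parallel pairs and antiparallel triples, so every element acquires exactly one antiparallel partner. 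The antiparallel involution $a$ commutes with $\sigma$, and by transitivity the least $k$ with $\sigma^k(x)=a(x)$ is independent of $x$; then $\sigma^k\cdot\chi'(x,y,z)=\chi'(a(x),a(y),a(z))=-\chi'(x,y,z)$, contradicting that $\sigma^k$ is a rotational symmetry. This is precisely the ``sign/parity obstruction'' you anticipate but do not supply, so the forward direction of your proposal is incomplete at its crux, and the claim that the contraction is $A_{3,m}$ would have to be deleted rather than repaired.

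In the ``if'' direction, your rank-$3$-orbit case coincides with the paper's argument (restrict to the orbit, invoke transitivity to get an alternating matroid with cyclic rotational group, and inject $G$ via Corollary~\ref{cor:uniqueness}). For the case in which every orbit has rank at most $2$, however, your support/submodularity bookkeeping is left open, and no bookkeeping of that kind can close it as stated: the three $180^{\circ}$ coordinate-axis rotations of the regular octahedron induce a subgroup of $R({\cal M})$ isomorphic to $C_2\times C_2$ whose orbits are the three antipodal vertex pairs, each of rank $2$, so a non-cyclic subgroup can indeed have all orbits of rank $\leq 2$. (The paper disposes of this case with the bare assertion $|G|=2$, which the same example shows needs additional hypotheses; so your unease here is justified, but the route of forcing $|G|\leq 2$ cannot succeed.) In summary, both places you flag as ``the real work'' are genuine gaps: the first is filled in the paper by the non-acyclicity/antiparallel-pairing argument sketched above, and the second is not fillable in the form you attempt.
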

\begin{proof}
($\Leftarrow$)
Since $|G| > 2$, there exists $x \in E$ such that $\rank({\cal M}|_{G \cdot x}) = 3$ and
the group $\widetilde{G} := \{ \tau|_{G \cdot x} \mid \tau \in G\}$ is a subgroup of the symmetry group of the alternating matroid ${\cal M}|_{G \cdot x}$ of rank $3$.
Therefore, $\widetilde{G}$ (and thus $G$) is a dihedral group or a cyclic group.

Take $y \in E$ such that $\rank (G \cdot x \cup \{ y \} ) = 4$.
If $\rank (G \cdot y)=1$, let us consider ${\cal M}|_{G \cdot x \cup \{ y\}}/y$.
Since $y$ is a coloop of ${\cal M}|_{G \cdot x \cup \{ y\}}$, it holds that ${\cal M}|_{G \cdot x \cup \{ y\}}/y \simeq {\cal M}|_{G \cdot x}$
and that $\sigma|_{G \cdot x}$ is a rotational symmetry of ${\cal M}|_{G \cdot x}$ for all $\sigma \in G$. 
Then, $G$ is a cyclic group by Corollary \ref{cor:uniqueness}.
Now consider the case $\rank (G \cdot y)=3$.
Suppose that $G$ is a dihedral group and let $C_G$ be the maximal cyclic subgroup of it. 
Then, there exists $\tau \in G$ such that $\tau (y) = y$ ($\tau|_{G \cdot y}$ is the reflection symmetry of ${\cal M}|_{G \cdot y}$ fixing $y$).
Since $\tau|_{G \cdot x \cup \{ y \}} \in R({\cal M}|_{G \cdot x \cup \{ y \}})$
and since $y$ is a coloop of ${\cal M}|_{G \cdot x \cup \{ y \}}$, it holds that ${\cal M}|_{G \cdot x \cup \{ y \}}/y \simeq {\cal M}|_{G \cdot x}$ and that 
$\tau|_{G \cdot x}$ is a rotational symmetry of ${\cal M}|_{G \cdot x} (\simeq A_{3,|G \cdot x|})$.  
This leads to that $\tau|_{G \cdot x} \in C_G$, which is a contradiction. Therefore, $G$ is a cyclic group.
\\
($\Rightarrow$)
Let $\sigma$ be a generator of $G$ and
consider the oriented matroid ${\cal N}:={\cal M}|_{G \cdot x}$.
Then all elements of ${\cal N}$ are extreme points by transitivity.
We prove a contradiction by assuming $\rank({\cal N})=4$.

Let us consider an acyclic single element extension ${\cal N} \cup q$ of ${\cal N}$ with a fixed point $q$.
Then contract ${\cal N} \cup q$ by $q$ and obtain ${\widetilde {\cal N}}:= ({\cal N} \cup q)/q$.
Note that $\sigma$ is a proper rotational symmetry of ${\widetilde {\cal N}}$.
Also, note that the oriented matroid ${\widetilde {\cal N}}$ is not acyclic (and loopless) by Proposition \ref{prop:FP_inside}.
If ${\widetilde {\cal N}}$ is simple, then ${\widetilde {\cal N}}$ must be isomorphic to an alternating matroid 
by Proposition \ref{rank3_transitivity1}, which is a contradiction. Therefore, ${\widetilde {\cal N}}$ is not simple.
We remark that no two distinct elements are parallel  in ${\widetilde {\cal N}}$ 
by convexity of ${\cal N}$ (recall that $q$ is inside ${\cal N}$ by Proposition \ref{prop:FP_inside}).
Therefore, each element of ${\widetilde {\cal N}}$ has exactly one  antiparallel element and no parallel element (other than itself) as in Figure \ref{fig:non_simple}.
\begin{figure}[ht]
\begin{center}
\includegraphics[scale=0.30,clip]{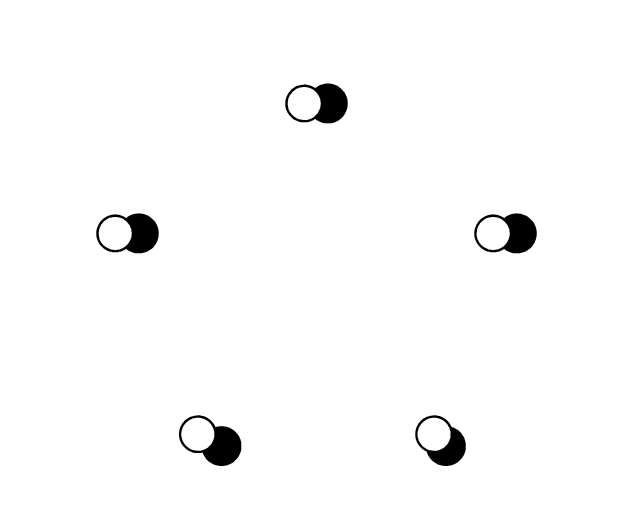}
\end{center}
\caption{non-simple case (without loops)}
\label{fig:non_simple}
\end{figure}

Let $a: E \rightarrow E$ be the map that takes each element of ${\widetilde {\cal N}}$
to its antiparallel element.
Let us take $x,y,z \in E$ such that $\chi(x,y,z) \neq 0$
Note that $\sigma (a(x)) = a(\sigma (x))$ (since $\sigma (x)$ and $\sigma (a(x))$ must be antiparallel).
Let $k > 0$ be the smallest number such that $\sigma^{k} (x) = a(x)$.
Because of transitivity, this number is common for all elements in $E$. Thus, we have
\[ \sigma^{k} \cdot \chi (x,y,z) = \chi (a(x),a(y),a(z))= - \chi (x,y,z). \]
It contradicts to the fact that $\sigma^k$ is a rotational symmetry of ${\widetilde {\cal N}}$.
This implies that the case $\rank({\cal N})=4$ never occurs.
As a conclusion, it holds that $\rank({\cal M}|_{G \cdot x}) \leq 3$ for all $x \in E$.
\end{proof}

\begin{cor}
\label{cor:rank3_restriction}
Let ${\cal M}$ be a simple acyclic oriented matroid of rank $4$ on a ground set $E$ and $G \subseteq R({\cal M})$ be a FPA rotational symmetry group with $|G| > 3$.
Suppose that $\rank (G \cdot x) =3$ or $\rank (G \cdot x) =1$ for any $x \in E$.
Then, for any $\sigma \in G$, the restriction $\sigma|_{G \cdot x}$ is a rotational symmetry of ${\cal M}|_{G \cdot x}$.
\end{cor}
\begin{proof}
Note that $G$ is a cyclic group by Proposition \ref{prop:cyclic}. Let $\tau$ be a generator of $G$.
Then, the order of $\tau|_{G \cdot x}$ is greater than $2$.
Since ${\cal M}|_{G \cdot x}$ is an alternating matroid of rank $3$, $\tau|_{G \cdot x}$ is a rotational symmetry of ${\cal M}|_{G \cdot x}$.
We remark that $\sigma = \tau^i$ for some $i \in \mathbb{Z}$.
Therefore, $\sigma|_{G \cdot x} = (\tau|_{G \cdot x})^i$ is also a rotational symmetry of  ${\cal M}|_{G \cdot x}$.
\end{proof}
\\
We conclude this subsection by proving the following lemma.
\begin{lem}
\label{lem:cyclic_subgroup}
Let ${\cal M}$ be a simple oriented matroid of rank $r$ on a ground set $E$, $H \subseteq R({\cal M})$ a subgroup of $R({\cal M})$,
$G_1,G_2 \subseteq H$ cyclic subgroups of $H$, 
and $X$ a subset of $E$ that is invariant under the actions of $G_1$ and $G_2$, and which satisfies $\rank(X) \geq r-1$.
If there is a cyclic group $G^X \subseteq R({\cal M}|_X)$ such that $G_1|_X (:= \{ \sigma|_X \mid \sigma \in G_1 \}) \subseteq G^X$ and
$G_2|_X (:= \{ \tau|_X \mid \tau \in G_2 \}) \subseteq G^X$,
then there exists a cyclic group $G \subseteq H$ such that $G_1 \subseteq G$ and $G_2 \subseteq G$.
\end{lem}
\begin{proof}
Let $\sigma$ and $\tau$ be generators of $G_1$ and $G_2$ respectively.
Since $\sigma|_X, \tau|_X \in G^X$, there exists $\pi \in R({\cal M}|_X)$ and 
$s,u \in \mathbb{N}$ such that $\sigma|_X = \pi^s$ and $\tau|_X = \pi^t$.
Let $l := {\rm gcd}(s,t)$. Then, there exist $a, b \in \mathbb{Z}$ such that $as+bt = l$.
Therefore, we have $\pi^l = (\sigma^a \tau^b)|_X \in R({\cal M}|_X)$.
Let $\omega := \sigma^a \tau^b \in H$ and then it holds that $\omega^{\frac{s}{l}} = \sigma$ and $\omega^{\frac{t}{l}} = \tau$ by Corollary \ref{cor:uniqueness}.
The cyclic group $G$ generated by $\omega$ is a subgroup of $H$ and contains $\sigma$ and $\tau$.
\end{proof}

\subsection{Construction of a set $S$ with the desired property}
\label{subsec:suitable}
For a simple acyclic oriented matroid ${\cal M}$ of rank $4$, let $R_f({\cal M})$ be a FPA subgroup of $R({\cal M})$.
We will construct a desired set using the orbits under maximal cyclic subgroups of $R_f({\cal M})$.
First, we shall show that each element of $R_f({\cal M}) \setminus \{ \id \}$ belongs to exactly one maximal cyclic subgroup of $R_f({\cal M})$.
\begin{prop}
\label{prop:maximal_cyclic}
Let ${\cal M}$ be a simple acyclic oriented matroid of rank $4$ on a ground set $E$, $R_f({\cal M})$ a FPA subgroup of $R({\cal M})$,
and $G_1,G_2$ distinct maximal cyclic subgroups of $R_f({\cal M})$.
Then we have $G_1 \cap G_2 = \{ \id \}$.
\end{prop}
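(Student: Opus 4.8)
The plan is to prove the contrapositive: if $G_1\cap G_2$ contains an element $\rho\neq e$, then $G_1=G_2$. Since $G_1\cap G_2$ is a subgroup of the cyclic group $G_1$ it is itself cyclic, so such a $\rho$ can be chosen. First note that $|G_1|>2$ and $|G_2|>2$: if some $|G_i|=2$, then the nontrivial subgroup $G_1\cap G_2$ of $G_i$ must equal $G_i$, so $G_i\subseteq G_{3-i}$, and maximality of $G_{3-i}$ forces $G_1=G_2$. Let $\sigma_i$ generate $G_i$; since $\rho$ is a power of each $\sigma_i$, both $\sigma_1$ and $\sigma_2$ commute with $\rho$. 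As ${\cal M}$ is simple, $\rho$ is a non-trivial rotational symmetry in the sense of Theorem~\ref{thm:uniqueness_general}, so $rank({\cal M}|_{Fix(\rho)})\le 2$. It then suffices to show that $H:=\langle G_1,G_2\rangle$ is cyclic, since $G_1,G_2\subseteq H$ and maximality give $G_1=H=G_2$.

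I would first reduce to the case where ${\cal M}$ is a rank-$4$ matroid polytope. Restricting ${\cal M}$ to its set $E^*$ of extreme points yields a simple rank-$4$ matroid polytope ${\cal M}|_{E^*}$, and $\sigma\mapsto\sigma|_{E^*}$ embeds $R({\cal M})$ into $R({\cal M}|_{E^*})$: it is well defined because rotational symmetries preserve extreme points and cocircuit signs, and injective by Corollary~\ref{cor:uniqueness} since $rank({\cal M}|_{E^*})=4\ge 4-1$. Being cyclic is both preserved and reflected by this embedding, so it is enough to prove the following statement: for a simple rank-$4$ matroid polytope ${\cal N}$ and cyclic subgroups $H_1,H_2\le R({\cal N})$ that share a nontrivial element $\rho$, $\langle H_1,H_2\rangle$ is cyclic. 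I would prove this by induction on $|E({\cal N})|$. Put $H:=\langle H_1,H_2\rangle$. If every $H$-orbit $O$ satisfies $rank({\cal N}|_O)\le 3$, then $H$ is cyclic by Proposition~\ref{prop:cyclic}. Otherwise some $H$-orbit $O$ has $rank({\cal N}|_O)=4$; since $rank({\cal N}|_{Fix(\rho)})\le 2$ we have $O\not\subseteq Fix(\rho)$, so $\rho|_O\neq e$, and ${\cal N}|_O$ is again a simple rank-$4$ matroid polytope with $H_1|_O,H_2|_O$ cyclic and sharing $\rho|_O$. If $O\subsetneq E({\cal N})$, the induction hypothesis makes $H|_O$ cyclic, and since $rank({\cal N}|_O)=4\ge 3$, Corollary~\ref{cor:uniqueness} shows $H\to H|_O$ is injective, whence $H$ is cyclic. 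The only remaining case is that $H$ acts transitively on $E({\cal N})$.

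For the transitive case I would descend to rank $3$. Theorem~\ref{thm:existence} supplies a single-element extension ${\cal N}\cup\{q\}$ invariant under all of $R({\cal N})$ with $q$ a fixed point; contracting, ${\cal N}':=({\cal N}\cup q)/q$ is a loopless rank-$3$ oriented matroid on $E({\cal N})$ into whose rotational symmetry group $H$ embeds (Corollary~\ref{cor:uniqueness}, since $rank({\cal N}')=3\ge 2$), and $H$ still acts transitively. If ${\cal N}'$ is simple, the rank-$3$ results of the previous section apply to it directly; if not, then by the convexity of ${\cal N}$ --- exactly as in the proof of Proposition~\ref{prop:cyclic} --- every element of ${\cal N}'$ has precisely one distinct antiparallel partner and no parallel one, and one passes to the simplification $\overline{{\cal N}}$, a simple rank-$3$ oriented matroid on which $H$ acts with controllable kernel. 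In either case one is left with a simple rank-$3$ oriented matroid carrying $\sigma_1$, $\sigma_2$ and $\rho$ with $\sigma_1,\sigma_2$ commuting with $\rho$ and $|\sigma_1|=|G_1|>2$; then Propositions~\ref{prop:rank3_transitivity2} and~\ref{prop:same_cyclic_group} can be used to place $\sigma_1$ and $\sigma_2$, hence $G_1$ and $G_2$, in a common cyclic group, so that $H$ is cyclic. Tracing this conclusion back through the embeddings above contradicts $G_1\neq G_2$.

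The step I expect to be the main obstacle is this final rank-$3$ analysis in the transitive case. Proposition~\ref{prop:same_cyclic_group} applies cleanly when $\rho$ is proper of order $>2$, but when $\rho$ has order exactly $2$ one cannot invoke it for $\rho$ and must instead exploit that $\sigma_1$ itself has order $|G_1|>2$: the natural route is to compare the two cyclic groups $G_2$ and $\sigma_1 G_2\sigma_1^{-1}$ --- both of order $|G_2|$ and both containing $\rho=\sigma_1\rho\sigma_1^{-1}$ --- to show they coincide, and conclude that $\sigma_1$ centralizes $G_2$, making Proposition~\ref{prop:same_cyclic_group} available with $\sigma_1$ in the role of the proper symmetry. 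One also has to verify that the kernel of the simplification map $R({\cal N}')\to R(\overline{{\cal N}})$, which consists of involutions that act as antiparallel swaps while fixing a flat of rank at most $2$, does not obstruct pulling the conclusion back --- for instance because the relevant element avoids this kernel once its order exceeds $2$, the order-$2$ situation being handled on its own.
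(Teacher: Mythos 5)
Your reduction to a rank-$4$ matroid polytope and the induction down to the case where $H=\langle G_1,G_2\rangle$ acts transitively are plausible (modulo the unproved claim $rank({\cal M}|_{E^*})=4$), but the transitive case --- which you yourself flag as the main obstacle --- is where the argument genuinely fails, and it is not a repairable detail but the heart of the matter. Proposition \ref{prop:same_cyclic_group} requires the two symmetries it is applied to to commute, and all you have is that each of $\sigma_1,\sigma_2$ commutes with $\rho$. Applying it twice (to $\rho,\sigma_1$ and to $\rho,\sigma_2$, assuming $\rho$ is even proper of order $>2$ in the contracted rank-$3$ matroid) only produces two cyclic groups, one containing $\{\rho,\sigma_1\}$ and one containing $\{\rho,\sigma_2\}$; concluding that these sit inside a common cyclic group is exactly the uniqueness statement you are trying to prove, so the plan is circular at its crucial point. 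Your fallback for $\rho$ of order $2$ contains a concrete error: even granting $\sigma_1 G_2\sigma_1^{-1}=G_2$ (and proving this ``because both contain $\rho$'' is again an instance of the statement under proof), normalizing a cyclic group does not imply centralizing it --- conjugation by $\sigma_1$ could invert the generator, as it does in a dihedral group --- so the step ``conclude that $\sigma_1$ centralizes $G_2$'' is unjustified, and without commutativity Proposition \ref{prop:same_cyclic_group} never becomes available. In addition, after contracting by the fixed point of Theorem \ref{thm:existence} the rank-$3$ oriented matroid is not acyclic and may be non-simple; passing to its simplification can change orders, since the antiparallel involution can itself be a power of $\sigma_1$ (precisely the phenomenon exploited in the proof of Proposition \ref{prop:cyclic}), so neither ``$|\sigma_1|=|G_1|>2$'' nor properness of the induced symmetries is guaranteed, and the ``controllable kernel'' is never actually controlled.

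For comparison, the paper's proof avoids all of this machinery and is local: it splits on $|G_1\cap G_2|\geq 3$ versus $|G_1\cap G_2|=2$. In the first case a rank-$3$ orbit of the intersection forces $Flat(G_1\cdot x)=Flat(G_2\cdot x)$, the restriction to the orbit of the group generated by $G_1\cup G_2$ is a rank-$3$ matroid polytope (an alternating matroid), and this yields a cyclic group containing both $G_1$ and $G_2$, contradicting maximality. In the second case the shared involution $\sigma^a=\tau^b$ is shown, via a side argument inside $Flat(G_2\cdot y)$, to act as a reflection on the contraction by $Flat(G_1\cdot y)$ while being a rotational symmetry of ${\cal M}$ and of the flat, a parity contradiction. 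If you want to keep your inductive framework, the transitive case needs an argument of this side/parity type rather than an appeal to Propositions \ref{prop:rank3_transitivity2} and \ref{prop:same_cyclic_group}.
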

\begin{proof}
Let $\sigma$ (resp. $\tau$) be a generator of $G_1$ (resp. $G_2$),
$H_1:= G_1 \cap G_2$ and $H_2$ the group generated by $G_1 \cup G_2$.
Since ${\cal M}$ is acyclic, both of $\sigma$ and $\tau$ are proper rotational symmetries.

If $|H_1| \geq 3$, there exists $x \in E$ such that $\rank(H_1 \cdot x)=3$.
Thus it holds that ${\rm span}_{\cal M}(G_1 \cdot x) = {\rm span}_{\cal M}(G_2 \cdot x)$.
Therefore, the restriction ${\cal M}|_{H_2 \cdot x}$ is a matroid polytope of rank $3$.
It follows from Lemma \ref{lem:cyclic_subgroup} that there exists a cyclic group $\widehat{G} \supseteq G_1,G_2$ with $\widehat{G} \subseteq R_f({\cal M})$, 
which is a contradiction.

If $|H_1| =2$, there exists $y \in E$ such that $\rank(H_1 \cdot y)=2$.
Let $Y:= H_1 \cdot y$ and $\widetilde{Y}_1 := G_1 \cdot y$, and  $\widetilde{Y}_2 := G_2 \cdot y$.
Since we have $|G_1|,|G_2| > |H_1| = 2$, it holds that $\rank(\widetilde{Y}_1) = \rank(\widetilde{Y}_2) = 3$.
Let $2a$ and $2b$ be the orders of $\sigma$ and $\tau$ respectively.
Then, we have $\sigma^a = \tau^b$ and $Y= \{ y, \sigma^a(y) \}$.
Note that $\sigma^a \tau = \tau \sigma^a$.
Also, note that ${\cal M}|_{\widetilde{Y}_2} \simeq A_{3,|{\widetilde{Y}}_2|}$ and that $\tau^b|_{\widetilde{Y}_2}$ is the rotational symmetry of order $2$, and 
thus that 
there exists a cocircuit $V$ of ${\cal M}|_{\widetilde{Y}_2}$ such that $V^0 \supseteq Y$ and $V(\tau (y)) = +$, and $V(\tau^{b+1}(y)) = -$.
This leads to that there exists a cocircuit $W$ of ${\cal M}$ such that $W^0 \supseteq {\widetilde Y}_2$ and  $W(\tau (y)) = +$, and $W(\tau^{b+1}(y)) = -$.
Therefore, $\tau|_{E \setminus {\widetilde Y}_2}$ is a reflection symmetry of ${\cal M}/{\widetilde Y}_2$ (see Section \ref{sec:useful}).
Since $\sigma^a$ is a rotational symmetry of ${\cal M}$ and since $\sigma^a|_Y$ is a rotational symmetry of ${\cal M}|_Y$ (Corollary \ref{cor:rank3_restriction}), 
it is a contradiction (see Section \ref{sec:useful}).

As a conclusion, $|H_1|=1$ must hold, which implies $G_1 \cap G_2 = \{ \id \}$.
\end{proof}
\\
\\
Therefore,
if we can construct a set on which a certain group action of $R_f({\cal M})$ is defined and
the stabilizer subgroup of each element in  $R_f({\cal M})$ is a maximal cyclic subgroup of $R_f({\cal M})$,
it has a desired property.
In the following, let us write $G_{\sigma}$ to denote the maximal cyclic subgroup of $R_f({\cal M})$ that contains $\sigma \in R_f({\cal M})$.
The maximal cyclic group $G_{\sigma}$ can be classified into two types, i.e.,
$G_{\sigma} \in G^I \cup G^{II}$, where $G^I$ and $G^{II}$ are defined as follows.
\begin{align*}
G^{I} &:= \{ G_{\sigma} \mid \sigma \in R_f({\cal M}), \  \rank(G_{\sigma} \cdot x)=1 \text{ or } 3 \text{ for all $x \in E$} \},\\
G^{II} &:= \{ G_{\sigma} \mid \sigma \in R_f({\cal M}), \ \rank(G_{\sigma} \cdot x)=1 \text{ or } 2 \text{ for all $x \in E$} \}.
\end{align*}
Note that $G_{\sigma} \in G^{II}$ if and only if $|G_{\sigma}|=2$.

\subsubsection*{(I) Designing a suitable set for $G^{I}$}
The first idea may be to consider the set $S(\sigma ):= \{ G_{\sigma} \cdot x \mid x \in E\}$ for each $\sigma \in R_f({\cal M})$.
However, there is a possibility that $S(\sigma )$ is fixed by an element $\tau \in R_f({\cal M}) \setminus G_{\sigma }$.
Indeed, let $Pr_3$ be the point configuration defined by
\begin{align*}
\begin{pmatrix}
{\rm cos}\frac{2\pi}{3} & {\rm cos}\frac{4\pi}{3} & {\rm cos}0 & {\rm cos}\frac{2\pi}{3} & {\rm cos}\frac{4\pi}{3} & {\rm cos}0 \\
{\rm sin}\frac{2\pi}{3} & {\rm sin}\frac{4\pi}{3} & {\rm sin}0 & {\rm sin}\frac{2\pi}{3} & {\rm sin}\frac{4\pi}{3} & {\rm sin}0 \\
1                               &    1                               &  1             &   -1                           &  -1                            &  -1
\end{pmatrix}
\end{align*}
and ${\cal M}_{Pr_3}$ the associated oriented matroid of $Pr_3$.
Then, ${\cal M}_{Pr_3}$ has a rotational symmetry 
$\sigma = (1 2 3) (4 5 6)$ and $\tau = ((1 4) (2 5) (3 6)) ((2 3) (5 6)) = (1 4) (2 6) (3 5)$.
The set of $G_{\sigma}$-orbits is $\{ \{ 1,2,3\}, \{ 4,5,6\} \}$.
This is fixed by $\tau \notin G_{\sigma}$.
\\
\begin{figure}[h]
\begin{center}
\includegraphics[bb =119 468 455 773, scale=0.25]{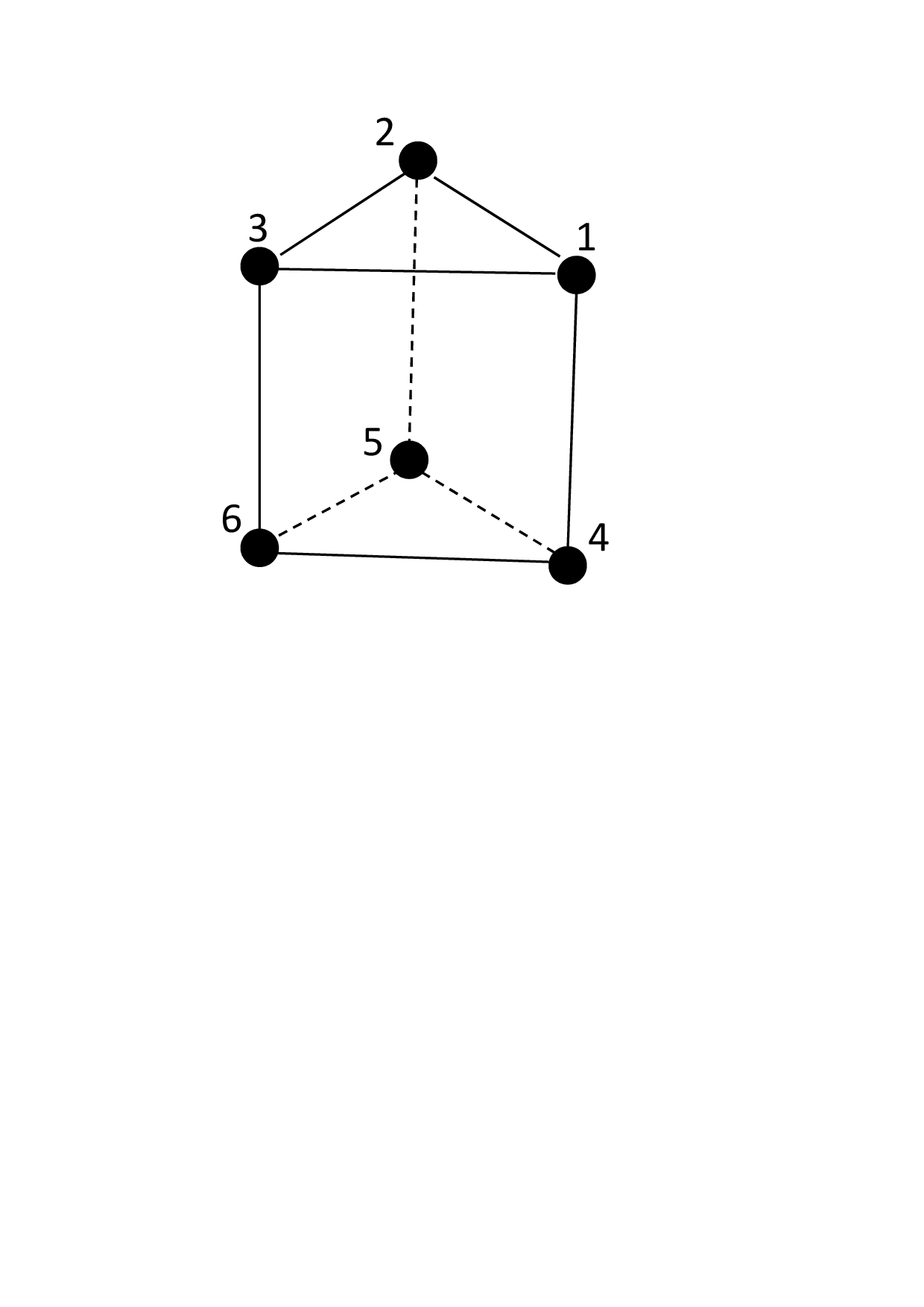}
\caption{Point configuration $Pr_3$}
\label{fig:prism}
\end{center}
\end{figure}
\\
The following proposition concerns a condition that such an event occurs.
\begin{prop}
\label{prop:orbit_action}
Let ${\cal M}$ be a simple acyclic oriented matroid of rank $4$, $R_f({\cal M})$ a FPA rotational symmetry group, 
$G$ a cyclic subgroup of $R_f({\cal M})$ generated by $\sigma (\neq \id )$,
and $X_1,X_2$ rank $3$ $G$-orbits.
If $\tau \cdot X_1 = X_2$ (i.e., $\tau^{-1}G\tau = G$) and $\sigma \tau \neq \tau \sigma$ for $\tau \in R_f({\cal M}) \setminus G$, then 
$\tau^2 =  \id$ and $(\sigma \tau)^2 = \id$.
If $\tau \cdot X_1 = X_2$ (i.e., $\tau^{-1}G\tau = G$) and $\sigma \tau = \tau \sigma$, then 
there is the cyclic subgroup $G_{\sigma, \tau}$ of $R_f({\cal M})$ that contains $\sigma$ and $\tau$.
\end{prop}
\begin{proof}
Let $H$ be the group generated by $\sigma$ and $\tau$, and $X := H \cdot x$ for $x \in X_1$.
Note that $X \supseteq X_1, X_2$.
\\
\\
{\bf (i)} $\sigma \tau \neq \tau \sigma$.
\begin{quote}
{\bf (i-a)} $\rank(X)=3$.

Note that $\sigma|_X$ is a rotational symmetry of ${\cal M}|_X$ (Corollary \ref{cor:rank3_restriction}).
Since ${\cal M}$ is acyclic, the restriction ${\cal M}|_{X}$ is also acyclic.
By transitivity, the oriented matroid ${\cal M}|_X$ is a matroid polytope of rank $3$, i.e., a relabeling of the alternating matroid $A_{3,|X|}$ (Proposition \ref{prop:rank_relabeling} in Appendix 1).
If $(\tau \sigma) |_X = (\tau \sigma) |_X$, then we have $\sigma \tau = \tau \sigma$ by Corollary \ref{cor:uniqueness}, which is a contradiction.
This implies that $\sigma|_X \tau|_X \neq \tau|_X \sigma|_X$ and that $\tau|_X$ is a reflection symmetry of ${\cal M}|_{X}$ ($\simeq A_{3,|X|}$).
Therefore, we have
$\tau^2|_X = \id$ and $(\sigma \tau)^2|_X = \id$, and thus
$\tau^2 = \id$ and $(\sigma \tau)^2 = \id$ by Corollary \ref{cor:uniqueness}.
\\
\\
{\bf (i-b)}  $\rank(X)=4$.

First of all, note that $(\sigma \tau )|_X \neq (\tau \sigma )|_X$ because
$(\sigma \tau)|_X = (\tau \sigma)|_X$ implies $\sigma \tau = \tau \sigma$.
By transitivity, ${\cal M}|_X$ is a matroid polytope.
Let us consider ${\cal N}:=({\cal M}|_X \cup q)/q$, where $q$ is a fixed point of $R_f({\cal M})$ such that ${\cal M}|_X \cup q$ is acyclic.
Every rotational symmetry $\sigma$ induces a rotational symmetry of ${\cal N}$.

If ${\cal N}$ is simple, then ${\cal N} \simeq A_{3,|X|}$ or ${\cal N} \simeq {_{-\{ 2,4,\dots,|X|\}}}A_{3,|X|}$, which leads to
$(\tau \sigma)^2 = \id$ and $\tau^2 = \id$.
Thus we assume that ${\cal N}$ is non-simple.
By a similar argument to the proof of Proposition \ref{prop:cyclic}, each element of ${\cal N}$ has exactly one antiparallel element 
and no parallel element (other than itself) as in Figure \ref{fig:non_simple}.
Let $a:X\rightarrow X$ be the map that takes each element of $X$ to its antiparallel element.
The permutation $a$ is a reflection symmetry of ${\cal N}$ (and ${\cal M}$).
Note that $a(\sigma (x)) = \sigma (a(x))$ and $a(\tau (x)) = \tau (a(x))$ for all $x \in X$ and thus that $\tau (a(X_1)) = a(X_2)$.

If there exists $x_0 \in X_1$ such that $\tau (x_0) \notin a(X_1)$, then we have $a(X_1) \cap X_2 = \emptyset$.
Thus the oriented matroid ${\cal N}|_{X_1 \cup X_2}$ (of rank $3$) is simple, which is a contradiction.

If $X_2 = a(X_1)$, then $(a^{-1}\tau)|_{X_1}$ is a symmetry of ${\cal N}|_{X_1}$.
If  $(a^{-1}\tau)|_{X_1}$ is a rotational symmetry of ${\cal N}|_{X_1}$, there exists $k \in \mathbb{N}$ such that
$(a^{-1}\tau)|_{X_1} = \sigma^k|_{X_1}$ and thus $a^{-1}\tau = \sigma^k$.
This implies that $\tau$ is a reflection symmetry of ${\cal M}$, which is a contradiction.
Therefore, $(a^{-1}\tau)|_{X_1}$ is a reflection symmetry of ${\cal N}|_{X_1}$ and thus we have
$(a^{-1}\tau)^2|_{X_1}=\id$ and $(a^{-1}\tau \sigma)^2|_{X_1}=\id$.
This yields $\tau^2 = \id$ and $(\tau \sigma)^2 = \id$ by Corollary \ref{cor:uniqueness}.
\end{quote}
{\bf (ii)} $\sigma \tau = \tau \sigma$.
\begin{quote}
{\bf (ii-a)} $\rank(X)=3$.

Similarly to Case (i-a), we have ${\cal M}|_X \simeq A_{3,|X|}$ and thus
there is a cyclic subgroup $G_{\sigma,\tau}^X$ of $R({\cal M}|_X)$ that contains $\tau|_X$ and $\sigma|_X$.
This implies that there exists a cyclic subgroup $G_{\sigma, \tau} \subseteq R_f({\cal M})$ with $\sigma \in G_{\sigma, \tau}$ and 
$\tau \in G_{\sigma, \tau}$ by Lemma \ref{lem:cyclic_subgroup}.
\\ \\
{\bf (ii-b)} $\rank(X)=4$.

By the same argument as Case (i-b),  there exits a cyclic subgroup $G_{\sigma, \tau} \subseteq R_f({\cal M})$ with
$\sigma \in G_{\sigma, \tau}$ and $\tau \in G_{\sigma, \tau}$.
\end{quote}
\end{proof}
\\
\\
We can remedy the situation by considering orderings of rank $3$ orbits instead of the set of them.
Let us pick up rank $3$ $G_{\sigma}$-orbits and consider their flats $F_1,\dots,F_m$.
We assume that there is no duplication in this list by removing some of the flats if necessary.
Then, consider flat orderings
$(F_{p(1)},\dots,F_{p(m)})$
that satisfy the condition in the following lemma.
\begin{lem}
\label{lem:ordering}
Let ${\cal M}$ be a simple acyclic oriented matroid of rank $4$ on a ground set $E$, $R_f({\cal M})$ a FPA subgroup of $R({\cal M})$,
and $G \subseteq R_f({\cal M})$ a cyclic subgroup of $R_f({\cal M})$ of order $q>2$.
Consider rank $3$ $G$-orbits $O_1,\dots,O_m$ and their flats $F_1:={\rm span}_{\cal M}(O_1),\dots,F_m:={\rm span}_{\cal M}(O_m)$.
We assume that there is no duplication in this list by removing some of them if necessary.
Suppose $m \geq 2$.
Then, there exist exactly two permutations $p$ on $[m]$ that satisfy the following condition:
for each $i=1,\dots,m$, there exists a covector $X_i$ of ${\cal M}$ such that
\begin{align*}
\begin{split}
&\text{$X_i(e)=0$ for all $e \in F_{p(i)}$,} \\
&\text{$X_i(f)=-$ for all $f \in F_{p(1)} \cup \dots \cup F_{p(i-1)}$,} \\
&\text{$X_i(g)=+$ for all $g \in F_{p(i+1)} \cup \dots \cup F_{p(m)}$.} \\
\end{split}
\end{align*}
\end{lem}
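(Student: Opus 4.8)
The plan is to think of the flats $F_1,\dots,F_m$ as "parallel hyperplanes'' in the rank-$4$ oriented matroid, and to recover from them a linear ordering exactly as one would order parallel hyperplanes in $\mathbb{R}^3$ by which side of each one all the others lie on. First I would observe that, since $G$ has order $q>2$ and acts on each $O_j$ with a rank-$3$ orbit, each $O_j$ spans the rank-$3$ flat $F_j$ and $G$ acts on $F_j$ as the full rotation group of a convex polygon; in particular $G$ fixes $F_j$ setwise for every $j$. The key structural input is that the contraction ${\cal M}/F_i$ is a rank-$1$ oriented matroid (hence trivial) only when $F_i$ is a hyperplane; more usefully, for any two distinct $F_i,F_j$ the union $F_i\cup F_j$ has rank $4$ (they cannot both be contained in a common rank-$3$ flat, as they are distinct flats each of rank $3$), so ${\cal M}$ restricted to $F_i\cup F_j$ is acyclic of rank $4$ with $F_i,F_j$ complementary "facet-like'' flats. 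Thus there is a cocircuit-type covector $Y_{ij}$ vanishing on $F_i$ and constant (say $+$) on $F_j$; this is the analogue of "hyperplane $F_i$, with $F_j$ strictly on its positive side.''

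Next I would use this to define, for each $i$, a relation: write $j \prec_i k$ if there is a covector vanishing on $F_i$ that is $-$ on all of $F_j$ and $+$ on all of $F_k$. The heart of the argument is to show that for a fixed $i$ the flats $\{F_j : j\neq i\}$ split into "the positive side'' and "the negative side'' of $F_i$, i.e. every $F_j$ ($j \ne i$) is assigned a sign $\epsilon_j^{(i)}\in\{+,-\}$ by \emph{any} covector vanishing on $F_i$, and this sign is independent of the choice of covector. This is where I would invoke convexity/transitivity: because $G$ acts transitively on each $F_j$ and ${\cal M}$ is acyclic with all elements extreme, a covector vanishing on $F_i$ that changed sign within some $F_j$ would, after composing with its $G$-translates $X\circ\sigma(X)\circ\cdots\circ\sigma^{q-1}(X)$, produce a covector that is $0$ on $F_i$ and nonnegative on $F_j$ — forcing $F_j$ to lie in a proper flat containing $F_i$, contradicting $\mathrm{rank}(F_i\cup F_j)=4$. (This composition trick is exactly the one used repeatedly in the proofs of Propositions \ref{prop:rank3_transitivity2} and \ref{prop:same_cyclic_group}, so I would lean on it.) Having fixed the sign vector $(\epsilon_j^{(i)})_{j\ne i}$, the desired covector $X$ in the statement is obtained by taking the covector $Y$ that vanishes on $F_i$ and is $+$ where $\epsilon^{(i)}=+$, then (if needed) reorienting — but in fact no reorientation is needed once the ordering is chosen correctly.

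Finally I would extract the ordering itself. Define a tournament on $\{1,\dots,m\}$ by: $i\to j$ if $\epsilon_j^{(i)}=+$. I would show this tournament is transitive — i.e. a linear order — by a three-flat exchange argument: given $F_i,F_j,F_k$, restrict to $F_i\cup F_j\cup F_k$ (rank $4$, acyclic, a matroid polytope by transitivity after a single-element extension via Theorem \ref{thm:existence}), where the three rank-$3$ flats must be "stacked'' since in a rank-$4$ matroid polytope three pairwise-skew facet flats that are $G$-invariant cannot form a $3$-cycle; an intersection/elimination computation like the one in the $\chi_P(5,6,7,8)=0$ argument of Section 3 rules out the cyclic configuration. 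A transitive tournament on $m$ elements has a unique linearizing permutation $p$ (and its reverse), and the displayed covector condition for $p$ is precisely the statement that $p(1)\to p(2)\to\cdots\to p(m)$; the two permutations are $p$ and $i\mapsto p(m+1-i)$, corresponding to the global reorientation $X\mapsto -X$. The main obstacle I anticipate is proving the transitivity of the tournament rigorously — showing that no three of the flats can be "cyclically stacked'' — since this is the genuinely oriented-matroid-theoretic step and requires carefully combining transitivity of the $G$-action, the matroid-polytope structure from Theorem \ref{thm:existence}, and an elimination argument; the rest is bookkeeping.
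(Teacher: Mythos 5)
Your high-level plan (first show that every cocircuit vanishing on one flat $F_i$ is constant on each other orbit, then show that these sidedness data assemble into a linear order, unique up to reversal) is the same route the paper takes, but both genuinely technical steps are not correctly carried out. First, the constancy claim: your composition trick fails. If $X$ is a nonzero covector with $X=0$ on $F_i$, its zero set is a flat containing the hyperplane $F_i$ and hence equals $F_i$; so $X$ is nonzero on all of $O_j$, and $X\circ\sigma(X)\circ\cdots\circ\sigma^{q-1}(X)$ simply agrees with $X$ on $O_j$ (composition only overwrites zeros), so it cannot turn mixed signs on $O_j$ into a nonnegative pattern; moreover, even a covector that is $0$ on $F_i$ and nonnegative on $F_j$ would not force $F_j$ into a proper flat containing $F_i$. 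The paper's argument is different: choose $\sigma\in G$ with $\sigma(e_1)=e_2$ where $X(e_1)=+$, $X(e_2)=-$, and eliminate $e_2$ between $X$ and $\sigma(X)$; the resulting covector vanishes on $F_i\cup\{e_2\}$, a rank-$4$ set, hence is the zero covector, which forces $\sigma(X)=-X$; and this is excluded because $\sigma$ would then act as a reflection across the hyperplane $F_i$ while acting orientation-preservingly on ${\cal M}|_{F_i}$, contradicting that $\sigma$ is a rotational symmetry of ${\cal M}$. Without an argument of this kind your signs $\epsilon^{(i)}_j$ are not even defined.

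Second, the ordering step, which you yourself flag as the main obstacle, is exactly where the work lies, and your sketch does not supply it. The tournament $i\to j$ is not well defined before a sign of the cocircuit at each $F_i$ is fixed (each hyperplane carries two opposite cocircuits), so ``transitivity of the tournament'' is not the right invariant; what must be proved is a consistency statement such as: if a cocircuit vanishing on $Flat(S)$ is $+$ on $T$ and $-$ on $U$, then no covector can vanish on $Flat(U)$ and be $+$ on both $S$ and $T$. Your proposed justification --- that ${\cal M}|_{F_i\cup F_j\cup F_k}$ is a matroid polytope ``by transitivity'' after an extension via Theorem~\ref{thm:existence} --- is unsupported: $G$ does not act transitively on the union of three distinct orbits, and the restriction to a union of rank-$3$ flats need not be a matroid polytope, so the Section~3 style intersection argument has nothing to attach to. The paper instead proves the consistency purely by covector calculus: from the two given cocircuits it constructs, via conformal elimination and $G$-translates, a covector that is $0$ on $Flat(U)$, $-$ on $Flat(S)$ and $+$ on $Flat(T)$, and then eliminates against the hypothetical ``wrong-order'' covector to obtain a nonzero covector vanishing on a rank-$4$ set, a contradiction. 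As it stands, your proposal identifies the correct intermediate statements but proves neither of them, so both essential steps remain gaps.
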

The lemma abstracts the geometric fact that all the flats of rank $3$ orbits are parallel
and can be ordered along the direction of the normal vector (as depicted in Figure \ref{fig:ordering}).
A proof is given in Appendix 2.
\\
\begin{figure}[h]
\begin{center}
\includegraphics[scale=0.25]{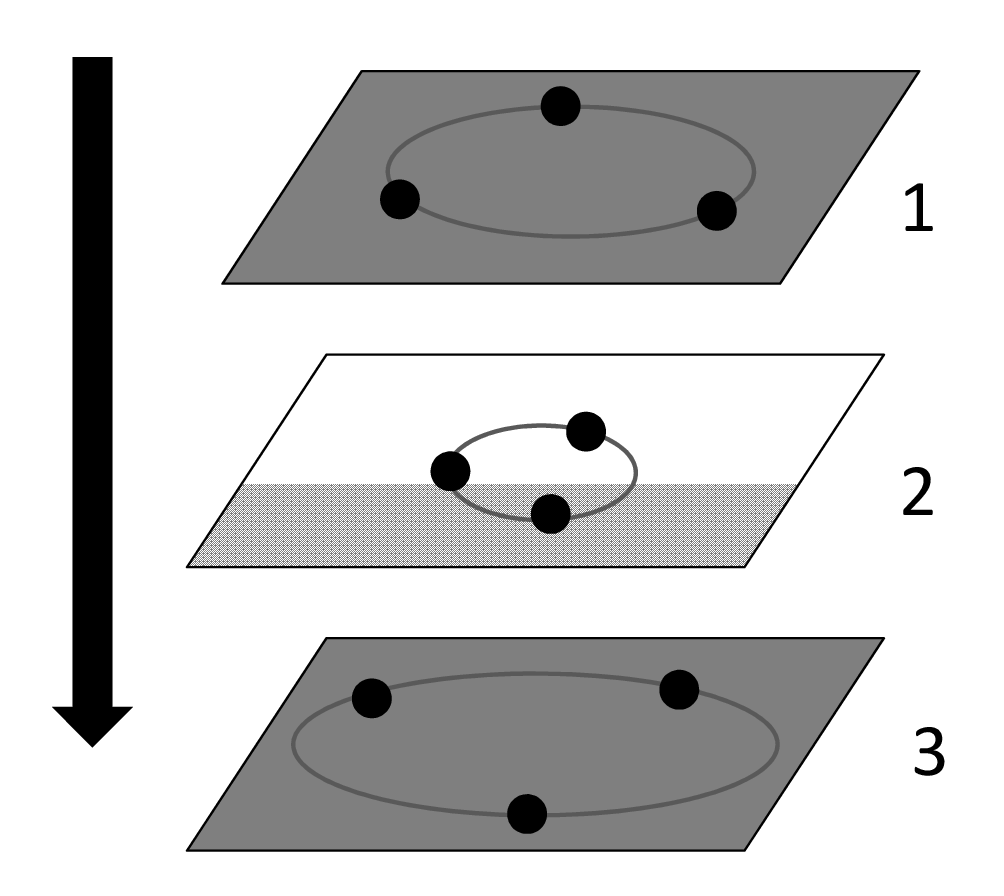}
\includegraphics[scale=0.25]{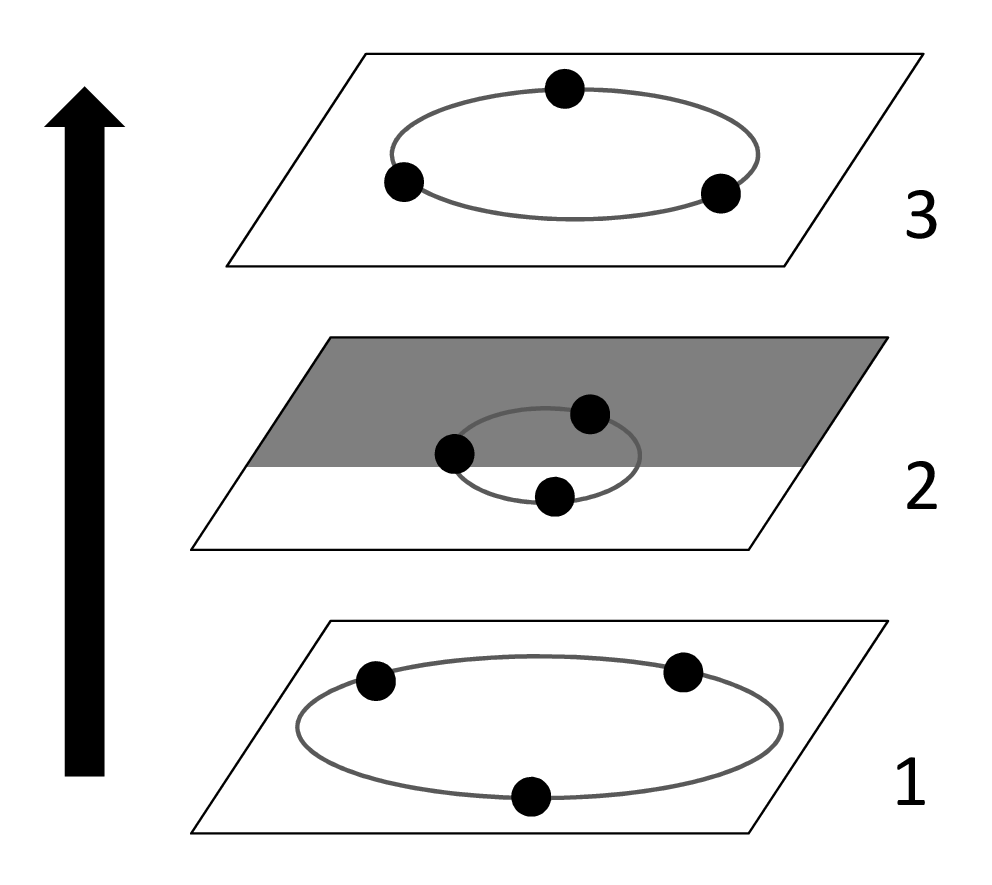}
\caption{two orderings of the flats of rank 3 orbits along the normal vectors}
\label{fig:ordering}
\end{center}
\end{figure}
\\
Let us write these two orderings as $T_1(\sigma )$ and $T_2(\sigma )$ (it is arbitrarily decided which one goes to $T_1(\sigma )$ and  to $T_2(\sigma )$).
If there is only one flat of rank $3$ orbit, we denote it by $F$.
Let $X_F$ be one of the two opposite cocircuits with $(X_F)^0=F$. Then, 
we set $T_1(\sigma ):= (( X_F)^-, (X_F)^0, (X_F)^+) $ and $T_2(\sigma ):=((X_F)^+, (X_F)^0, (X_F)^-)$.
Then, consider the collection of $T_1 (\sigma )$ and $T_2 (\sigma )$: 
\begin{align*}
S^{I} &:= \{ T_1 (\sigma ) \mid \text{ $\sigma \in R_f({\cal M})$, $G_{\sigma} \in G^{I}$}\} \cup \{ T_2 (\sigma ) \mid \text{  $\sigma \in R_f({\cal M})$, $G_{\sigma} \in G^{I}$}\}. 
\end{align*}
Let us consider the group action of $R_f({\cal M})$ on $S^{I}$ defined as follows:
For $\sigma \in R_f({\cal M})$ with $G_{\sigma} \in G^{I}$,  $\tau \in R_f({\cal M})$ and $x \in E$, let
\begin{align*} 
\tau \cdot \{ x, \sigma (x), \sigma^2 (x), \dots \} 
&:= \{ \tau (x), \tau (\sigma (x)), \tau (\sigma^2 (x)), \dots \}  \\
&= \{ \tau (x), \tau \sigma \tau^{-1} (\tau (x)), (\tau \sigma \tau^{-1})^2 (\tau (x)), \dots \} 
\end{align*}
and extend it component-wisely to $T_i(\sigma )$.
Clearly, the group action is well-defined, i.e., $T_i(\sigma_1) = T_i(\sigma_2) \Rightarrow \tau \cdot T_i(\sigma_1) = \tau \cdot T_i(\sigma_2)$
for all $\sigma_1,\sigma_2 \in R_f({\cal M})$, for $i=1,2$.
It holds that $\tau \cdot T_1 (\sigma ) = T_1 (\tau \sigma \tau^{-1})$ and $\tau \cdot T_2 (\sigma ) = T_2 (\tau \sigma \tau^{-1})$,
or that
$\tau \cdot T_1 (\sigma ) = T_2 (\tau \sigma \tau^{-1})$ and $\tau \cdot T_2 (\sigma ) = T_1 (\tau \sigma \tau^{-1})$.
\\
\\
The following lemma assures that there is a one-to-one correspondence between $S^{I}$ 
and $G^{I}$. 
\begin{prop}
Let $\sigma, \tau$ be generators of $G_{\sigma},G_{\tau} \in G^{I}$ respectively.
Then $T_i(\sigma ) = T_i(\tau ) \Leftrightarrow G_{\sigma} = G_{\tau}$, for $i=1,2$.
\end{prop}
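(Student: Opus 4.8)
The plan is to prove both directions, with the nontrivial content in $(\Rightarrow)$. The direction $(\Leftarrow)$ is immediate: if $G_\sigma = G_\tau$, then $\sigma$ and $\tau$ generate the same cyclic group, so they have the same rank $3$ orbits, hence the same list of flats $F_1,\dots,F_m$ (with duplicates removed), hence the same pair of admissible orderings by Lemma~\ref{lem:ordering} (or the same $(X_F^-,F,X_F^+)$ and $(X_F^+,F,X_F^-)$ in the single-flat case). The labelling convention that sends one ordering to $T_1$ and the other to $T_2$ depends only on the orbit data, so $T_i(\sigma)=T_i(\tau)$ for $i=1,2$.

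For $(\Rightarrow)$, suppose $T_i(\sigma)=T_i(\tau)$. First I would extract from the equality of these orderings the equality of the underlying flat collections: the set $\{F_1,\dots,F_m\}$ of rank $3$ flats carrying $G_\sigma$-orbits coincides with the corresponding set for $G_\tau$. Pick any such flat $F$ with $\mathrm{rank}({\cal M}|_F)=3$; then $G_\sigma$ acts on $F \cap E$ and the orbit $O$ of some $x \in F\cap E$ under $G_\sigma$ has rank $3$, and likewise $F\cap E$ contains a rank $3$ $G_\tau$-orbit. The key observation is that $G_\sigma$ restricted to $F\cap E$ and $G_\tau$ restricted to $F\cap E$ are both rotational symmetry groups of the rank $3$ simple oriented matroid ${\cal M}|_{F\cap E}$, which by transitivity (Proposition~\ref{rank3_transitivity1}) is a relabeling of an alternating matroid $A_{3,|F\cap E|}$. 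The rotational symmetry group of an alternating matroid is cyclic and acts so that the generator is rotation by one step along the cyclic order of $F\cap E$; hence the cyclic subgroups generated by $\sigma|_{F\cap E}$ and $\tau|_{F\cap E}$ are both subgroups of this one cyclic group, and in particular $\sigma|_{F\cap E}$ and $\tau|_{F\cap E}$ commute and lie in a common cyclic group. The point now is that $\mathrm{rank}(F\cap E)=3 \geq r-1 = 3$, so Corollary~\ref{cor:uniqueness} applies: agreement of rotational symmetries on a rank $\geq r-1$ subset forces global agreement. More precisely, since $\sigma|_{F\cap E}$ and $\tau|_{F\cap E}$ live in the same cyclic group, some power $\tau^j|_{F\cap E}$ equals $\sigma|_{F\cap E}$, and then Corollary~\ref{cor:uniqueness} gives $\tau^j=\sigma$, so $\sigma \in G_\tau$; by maximality $G_\sigma = G_\tau$.

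The main obstacle I anticipate is handling the single-flat case ($m=1$) cleanly and, more subtly, justifying that the equality of the \emph{ordered} data $T_i(\sigma)=T_i(\tau)$ really does give a common flat of rank $3$ on which both groups act with a rank $3$ orbit — when $G_\sigma \in G^I$ this is guaranteed by definition of $G^I$ (every orbit has rank $1$ or $3$, and since $\sigma \neq e$ acts nontrivially, some orbit has rank $3$), so the flat list is nonempty, but one must check the degenerate sub-case where $F\cap E$ happens to be the \emph{whole} ground set $E$: then ${\cal M}$ itself would be rank $3$, contradicting rank $4$, so in fact $F\cap E \subsetneq E$ and the argument above goes through unchanged. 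Once the common rank $3$ flat is in hand, the alternating-matroid structure plus Corollary~\ref{cor:uniqueness} does all the work, so the bulk of the write-up is the bookkeeping of translating between the flat-ordering data and the orbit data.
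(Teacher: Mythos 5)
Your overall skeleton --- find a common rank-$3$ piece on which $\sigma$ and $\tau$ both restrict to rotations of an alternating matroid, then globalize with Corollary \ref{cor:uniqueness} and finish by maximality --- is the same as the paper's, but the way you set up the rank-$3$ piece has a genuine gap. You work with the full flat $F$ (your $F\cap E$) and assert that ${\cal M}|_{F}$ is a relabeling of $A_{3,|F|}$ ``by transitivity (Proposition \ref{rank3_transitivity1})''. Neither $G_{\sigma}$ nor $G_{\tau}$ is known to act transitively on $F$: the flat of a rank-$3$ orbit can contain further orbits with the same flat (this is exactly why duplicates are removed from the list $F_1,\dots,F_m$), and it can even contain fixed points of $\sigma$, since Theorem \ref{thm:uniqueness_general} only bounds $rank(Fix(\sigma))$ by $2$ and does not keep $Fix(\sigma)$ off a rank-$3$ flat. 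Think of two concentric $p$-gons together with a point of the rotation axis, all lying in one plane of a $3$-dimensional configuration: there ${\cal M}|_{F}$ is not a matroid polytope, hence not alternating, and $\sigma|_{F}$ is not a one-step rotation of a cycle. The paper avoids this entirely by restricting to a single rank-$3$ orbit $X=G_{\sigma}\cdot x$, where transitivity is automatic, ${\cal M}|_X$ is a rank-$3$ matroid polytope, and $rank(X)=3=r-1$ still suffices for Corollary \ref{cor:uniqueness}.

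Even if ${\cal M}|_{F}$ were alternating, two further steps do not follow as written. First, you take for granted that $\sigma|_{F}$ and $\tau|_{F}$ are rotational symmetries of the restriction; the paper explicitly warns (proof of Proposition \ref{prop:orbit_action}) that a rotational symmetry induces a not necessarily rotational symmetry of a restriction --- on a single orbit this is rescued by transitivity (a transitive cyclic subgroup of the dihedral symmetry group of the cycle must consist of rotations), but on $F$ it is not immediate. Second, and more seriously, ``$\sigma|_{F}$ and $\tau|_{F}$ lie in a common cyclic group'' does not yield ``some power of $\tau|_{F}$ equals $\sigma|_{F}$'': in $C_{12}$ the element $\rho^{6}$ is not a power of $\rho^{4}$. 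What makes the paper's version work is that the $G_{\tau}$-orbit of $x$ is identified with $X$ itself, so $\tau|_X$ acts transitively on $X$ and is therefore a generator of the rotation group $C_{|X|}$ of ${\cal M}|_X$; only then is $\sigma|_X$ a power of $\tau|_X$, after which Corollary \ref{cor:uniqueness} gives $\sigma=\tau^{l}$ and Proposition \ref{prop:maximal_cyclic} gives $G_{\sigma}=G_{\tau}$. A repair along your lines would be to extract only that $\sigma|$ and $\tau|$ commute on a rank-$3$ set, lift this to $\sigma\tau=\tau\sigma$ by Corollary \ref{cor:uniqueness}, and then invoke Proposition \ref{prop:orbit_action} to place $\sigma$ and $\tau$ in one cyclic subgroup; but as written your argument rests on a structural claim about ${\cal M}|_{F}$ that is false in general and on an invalid inference from common cyclic membership to the power relation.
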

\begin{proof}
($\Rightarrow$)
Let $X:= G_{\sigma} \cdot x$ be a rank $3$ orbit.
Since $X \in T_i (\tau)$, we have $X = G_{\tau} \cdot x$. 
Therefore, $\sigma|_X$ and $\tau|_X$ are rotational symmetries of ${\cal M}|_X$ (Corollary \ref{cor:rank3_restriction}).
Since ${\cal M}|_X \simeq A_{3,|X|}$, there exists a cyclic subgroup $G_{\sigma, \tau} \subseteq R_f({\cal M})$
that contains $\sigma$ and $\tau$ by Lemma \ref{lem:cyclic_subgroup}.
This leads to that $G_{\sigma, \tau} = G_{\sigma} = G_{\tau}$.
The ($\Leftarrow$) part is trivial.
\end{proof}
\\
By the following proposition, the stabilizer subgroup of $T_i(\sigma )$ in $R_f({\cal M})$ is $G_{\sigma }$ for $i=1,2$.
\begin{prop}
$\tau \cdot T_i(\sigma) = T_i(\sigma) \Leftrightarrow \tau \in G_{\sigma}$, for $i=1,2$.
\end{prop}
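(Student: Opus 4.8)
The plan is to prove the two implications separately, using the preceding (unlabelled) one‑to‑one correspondence proposition $T_i(\sigma)=T_i(\tau)\Leftrightarrow G_\sigma=G_\tau$, Corollary \ref{cor:uniqueness}, Proposition \ref{prop:orbit_action}, and the relation derived above that $\tau\cdot T_1(\sigma)$ and $\tau\cdot T_2(\sigma)$ form, in one order or the other, the pair $\{T_1(\tau\sigma\tau^{-1}),T_2(\tau\sigma\tau^{-1})\}$. Throughout I may assume $\sigma$ generates $G_\sigma$, so $|G_\sigma|=p>2$ because $G_\sigma\in G^{I}$.

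Suppose first $\tau\in G_\sigma$. Then $\tau$ commutes with $\sigma$, and since $\tau(x)\in G_\sigma\cdot x$ we get $\tau(G_\sigma\cdot x)=G_\sigma\cdot\tau(x)=G_\sigma\cdot x$ for every $x$; hence $\tau$ stabilizes every orbit of $G_\sigma$, and in particular fixes each flat $F_1,\dots,F_m$ setwise. For $m\ge 2$ this means $\tau$ permutes the entries of the tuple $T_i(\sigma)$ trivially, so $\tau\cdot T_i(\sigma)=T_i(\sigma)$. For $m=1$, the inequality $rank({\cal M})=4>rank(F)=3$ forces a fixed point $f\in Fix(\sigma)\setminus F$; as $\tau\in G_\sigma$ fixes $f$ and $f$ lies in $X_F^{+}$ or $X_F^{-}$, $\tau$ cannot interchange $X_F^{+}$ and $X_F^{-}$, so again $\tau\cdot T_i(\sigma)=T_i(\sigma)$.

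Conversely, assume $\tau\cdot T_i(\sigma)=T_i(\sigma)$. By the relation above, $T_i(\sigma)=T_j(\tau\sigma\tau^{-1})$ for some $j\in\{1,2\}$. Re‑running the proof of the one‑to‑one correspondence proposition --- which uses only that the two tuples list the same flats, producing a rank‑$3$ set that is simultaneously an orbit of $\langle\sigma\rangle$ and of $\langle\tau\sigma\tau^{-1}\rangle$, hence an alternating matroid on which both act as rotations, and then applying Corollary \ref{cor:uniqueness} --- yields $\sigma=(\tau\sigma\tau^{-1})^{l}$ for some $l$, so $\tau$ normalizes $G_\sigma$. Moreover $\tau\cdot T_i(\sigma)=T_i(\sigma)$ forces $\tau(F_k)=F_k$ for every $k$. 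Suppose, for contradiction, $\tau\notin G_\sigma$, and fix a rank‑$3$ orbit $X=G_\sigma\cdot x_0$ with $Flat(X)=F_1$. Since $\tau$ normalizes $G_\sigma$ and fixes $F_1$, the set $\tau X=G_\sigma\cdot\tau(x_0)$ is again a rank‑$3$ orbit of $G_\sigma$ contained in $F_1$. Proposition \ref{prop:orbit_action}, applied to $X$, $\tau X$ and $\tau\in R({\cal M})\setminus G_\sigma$, gives two cases: if $\sigma\tau=\tau\sigma$, then $\sigma$ and $\tau$ lie in a common cyclic subgroup and so $\tau\in G_\sigma$, contrary to assumption; otherwise $\tau^{2}=e$ and $(\sigma\tau)^{2}=e$, whence $\tau\sigma\tau^{-1}=\sigma^{-1}$.

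The main obstacle is eliminating this last case. Each rank‑$3$ orbit of $G_\sigma$ is an alternating matroid $A_{3,p}$ on which $\langle\sigma\rangle$ acts faithfully as the rotation group --- faithfulness by Corollary \ref{cor:uniqueness} --- so $\sigma|_{F_k}$ is a rotational symmetry of ${\cal M}|_{F_k}$; as $\tau$ conjugates it to its inverse and $p>2$, the dihedral structure produced by Proposition \ref{prop:orbit_action} (cf.\ Figure \ref{fig:half_acyclic}) forces $\tau|_{F_k}$ to be a reflection symmetry of ${\cal M}|_{F_k}$. Combining this with $\tau\in R({\cal M})$ and the multiplicativity of the orientation of a symmetry under restriction to and contraction by the hyperplane $F_k$ shows that $\tau$ acts on ${\cal M}/F_k$ as a reflection, i.e.\ $\tau$ interchanges the two sides of $F_k$. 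If $m\ge 2$, choose a second flat $F_{k'}$; every element of a rank‑$3$ orbit inside $F_{k'}$ lies on a single side of $F_k$ (by the Claim in the proof of Lemma \ref{lem:ordering}), yet $\tau$ fixes $F_{k'}$ while swapping the sides of $F_k$ --- a contradiction. If $m=1$, then $\tau$ swapping the sides of $F$ means $\tau(X_F^{+})=X_F^{-}$, so $\tau\cdot T_1(\sigma)=(X_F^{+},F,X_F^{-})=T_2(\sigma)\ne T_1(\sigma)$, contradicting the hypothesis. Hence $\tau\in G_\sigma$, and the proof is complete. The two points needing care are the claims that $\sigma|_{F_k}$, and then $\tau|_{F_k}$, have the asserted rotational resp.\ reflective type on ${\cal M}|_{F_k}$ (handled via the structure of $A_{3,p}$ and of the half‑reoriented alternating matroid), and the bookkeeping when several concentric rank‑$3$ orbits share one flat, which is harmless since $\tau$ keeps all of them inside that flat.
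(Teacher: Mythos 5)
Your overall strategy is the same as the paper's: show that if $\tau \notin G_{\sigma}$ fixed $T_i(\sigma)$, then $\tau$ would have to exchange the two sides of a rank-$3$ flat in the ordering, which contradicts either the fixing of the other flats ($m\ge 2$) or the fixing of $(X_F^-,F,X_F^+)$ ($m=1$). Your forward direction (including the $m=1$ case via a $\sigma$-fixed element off $F$, which exists because $G_\sigma\in G^I$ forces every orbit outside $F$ to have rank $1$) and your endgame are fine, modulo one small citation issue: for $m\ge 2$ you need that \emph{all} of $F_{k'}$ (not just one orbit) lies on a single side of $F_k$, since $\tau$ could a priori send an orbit in $F_{k'}$ to a \emph{different} orbit in $F_{k'}$; the per-orbit Claim inside Lemma \ref{lem:ordering} does not give this, but the full conclusion of Lemma \ref{lem:ordering} does, so this is easily repaired.

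The genuine gap is the pivotal step ``$\tau|_{F_k}$ is a reflection symmetry of ${\cal M}|_{F_k}$.'' Knowing $\tau\sigma\tau^{-1}=\sigma^{-1}$ with $\tau^2=e$ does not by itself decide whether $\tau|_{F_k}$ satisfies (O1) or (O2) for the rank-$3$ restriction, and neither of the tools you cite decides it: Proposition \ref{prop:orbit_action} only outputs the group relations, and the half-reoriented alternating structure of Proposition \ref{prop:rank3_transitivity2} (Figure \ref{fig:half_acyclic}) in fact points the wrong way --- if $\tau$ interchanges two distinct orbits $X$ and $Y=\tau X$ inside $F_k$ and $_{-Y}{\cal M}|_{X\cup Y}$ were alternating, then because $\tau$ swaps the two halves and the rank $3$ is odd, reorientation by $Y$ flips the rotation/reflection type, so $\tau$ acting as a reflection of the cycle of $_{-Y}{\cal M}|_{X\cup Y}$ would make $\tau|_{X\cup Y}$ a \emph{rotation} of ${\cal M}|_{X\cup Y}$, i.e.\ no side swap. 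So that configuration must be \emph{excluded}, not used, and excluding it needs an input you never invoke: either the paper's device --- first establish $\tau\cdot X=X$ for a rank-$3$ orbit $X$, so that $\tau|_X$ is a symmetry of the alternating ${\cal M}|_X$ whose rotational symmetries are exactly the powers of $\sigma|_X$, whence $\tau|_X$ is a reflection (otherwise Corollary \ref{cor:uniqueness} would put $\tau$ in $G_\sigma$) --- or the Section~5 classification that the rotational symmetry group of a simple acyclic rank-$3$ oriented matroid is cyclic, hence abelian, which is incompatible with a rotation $\tau|_{F_k}$ conjugating the rotation $\sigma|_{F_k}$ (of order $>2$) to its inverse. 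As written, your argument leaves exactly the multi-orbit-per-flat, ``$\tau X\neq X$'' scenario without a valid justification, which is the one scenario your extra bookkeeping was meant to cover; the paper's shorter proof avoids it by working directly with an invariant orbit rather than with ${\cal M}|_{F_k}$.
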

\begin{proof}
Without loss of generality, we assume that $\sigma$ is a generator of $G_{\sigma}$.
It is clear that $\tau \cdot T_i(\sigma)=T_i(\sigma)$ if $\tau \in G_{\sigma}$.
Now let us assume that $\tau \in R_f({\cal M}) \setminus G_{\sigma}$ and prove that $\tau \cdot T_i(\sigma) \neq T_i(\sigma)$.
Let $X$ be a rank $3$ $G_{\sigma}$-orbit.
If $\tau \cdot T_i(\sigma) = T_i(\sigma)$, we have $\tau \cdot X = X$.
All rotational symmetries of ${\cal M}|_X$ are generated by $\sigma|_X$ 
and thus $\tau|_X$ is not a rotational symmetry of ${\cal M}|_X$ but a reflection symmetry.
Since $\tau|_X$ is a reflection symmetry of ${\cal M}|_X$ and $\tau$ is a rotational symmetry of ${\cal M}$,
$\tau|_{E \setminus X}$ must be a reflection symmetry of ${\cal M}/X$ (recall Section \ref{sec:useful}).
Therefore, we have $V(x) = - V(\tau (x)) (\neq 0)$, where $V$ is one of the two opposite cocircuits with $V^0 = {\rm span}_{\cal M}(X)$.
This leads to that $\tau \cdot F \neq F$ for $F \in T_i(\sigma) \setminus \{ {\rm span}_{\cal M}(X)\}$.
Therefore, we have $\tau \cdot T_i(\sigma) \neq T_i(\sigma)$. 
\end{proof}

\subsubsection*{(II) Designing a suitable set for $G^{II}$}

As a first step, let us consider the set $S_{pre}(\sigma ) := \{ O \mid \text{$O \subseteq E$ is a $G_{\sigma}$-orbit}\}$ for each
$\sigma \in R_f({\cal M})$ with $G_{\sigma} \in G^{II}$ and
collect $S_{pre}(\sigma )$ for $\sigma \in R_f({\cal M})$ with $G_{\sigma} \in G^{II}$:
\begin{align*}
S_{pre}^{II} :=\{ S_{pre}(\sigma ) \mid \text{ $\sigma \in R_f({\cal M}), G_{\sigma} \in G^{II}$}\}. 
\end{align*}
Let us consider the group action of $R_f({\cal M})$ on $S_{pre}^{II}$ defined as follows.
For  $\sigma \in R_f({\cal M})$ with $G_{\sigma} \in G^{II}$, $\tau \in R_f({\cal M})$ and $x \in E$, let
\[ \tau \cdot \{ x, \sigma (x) \} := \{ \tau (x), \tau \sigma \tau^{-1}(\tau (x)) \} \]
and extend it element-wisely to $S_{pre}(\sigma )$.
Clearly, the group action is well-defined, i.e., $S_{pre}(\sigma_1) = S_{pre}(\sigma_2) \Rightarrow \tau \cdot S_{pre}(\sigma_1) = \tau \cdot S_{pre}(\sigma_2)$
for all $\sigma_1,\sigma_2 \in R_f({\cal M})$.
The group action is closed in $S_{pre}^{II}$.

The following proposition shows that
for a rotational symmetry $\sigma \in R_f({\cal M})$ with $G_{\sigma} \in G^{II}$,
the stabilizer subgroup of $S_{pre}(\sigma )$ in $R_f({\cal M})$ equals to $G_{\sigma}$ except a few cases.
\begin{prop}
Let ${\cal M}$ be a simple acyclic oriented matroid of rank $4$ on a ground set $E$, $R_f({\cal M})$ a FPA rotational symmetry group of ${\cal M}$.
Let $\sigma \in R_f({\cal M})$ be such that $G_{\sigma} \in G^{II}$
and $X,Y$ rank $2$ $G_{\sigma}$-orbits.
If $\tau \cdot S_{pre}(\sigma ) = S_{pre}(\sigma )$, then the group $H$ generated by $\sigma$ and
$\tau$ is isomorphic to $\mathbb{Z}_2$ or $\mathbb{Z}_2^2$.
\end{prop}
\begin{proof}
Let $X:=\{ x, \sigma (x) \}$, $Y:= \{ y, \sigma (y) \}$
and $\tau \cdot X = Y$.
If $\tau (x) = y$, then we have $\tau \sigma (x) = \sigma (y) = \sigma \tau (x)$.
If $\tau \sigma (x) = y$, then it holds that $\sigma \tau \sigma (x) = \tau (x)$.
Thus we have $(\tau \sigma)|_{X \cup Y}= (\sigma \tau)|_{X \cup Y}$ in any cases.
We remark that $X \cup Y$ is an $H$-orbit.

If $\rank (X \cup Y) \geq 3$, we have  $\sigma \tau = \tau \sigma$ by Corollary \ref{cor:uniqueness}.
Let $p$ be the order of $\tau$.
If $\rank(X \cup Y)=3$ and $p > 2$, we have $H \simeq \mathbb{Z}_{2p}$, which contradicts to the maximality assumption of $G_{\sigma}$.
If $p=2$, then $H$ is isomorphic to one of $\mathbb{Z}_4$, $D_4 (\simeq \mathbb{Z}_2^2)$ and $\mathbb{Z}_2$.
If $\rank(X \cup Y)=4$, let us consider the contraction by a fixed point of $R_f({\cal M})$.
If $p > 2$, we have $H \simeq \mathbb{Z}_{2p}$ by the same argument as Proposition \ref{prop:orbit_action}, which is a contradiction.
If $p=2$, then we have $H \simeq \mathbb{Z}_4$ or $H \simeq \mathbb{Z}_2^2$, or $H \simeq \mathbb{Z}_2$.
Since $H \simeq \mathbb{Z}_4$ contradicts to the maximality assumption, we have $H \simeq \mathbb{Z}_2$ or $H \simeq \mathbb{Z}_2^2$.

Now let us consider the case $\rank(X \cup Y) = 2$.
Then, we have $X=Y$ and $\sigma |_X = \tau |_X$.
Here, note that
$\chi (e,f,x,\sigma(x)) = \sigma \cdot \chi(e,f,x,\sigma (x)) = -\chi (e,f,x,\sigma (x))$ and thus that $\chi (e,f,x,\sigma (x)) = 0$ for $e,f \in \Fix(\sigma)$, 
where $\chi$ is a chirotope of ${\cal M}$.
Therefore, it holds that
\begin{align*}
 \rank(\Fix(\sigma)) \leq 2, \ \rank(\Fix(\sigma ) \cup X) \leq 3.
 \end{align*}
Thus, there must exist a rank $2$ $G_{\sigma}$-orbit $X'$ with $\rank(X \cup X') \geq 3$.
Let $Y':=\tau \cdot X'$.
If $X'=Y'$, then $\tau|_{X \cup X'} = \sigma |_{X \cup X'}$.
Since $\rank(X \cup X') \geq 3$, we have $\tau = \sigma$ by Corollary \ref{cor:uniqueness}, which is a contradiction.
This concludes that $X' \neq Y'$ and that $\rank(X' \cup Y') \geq 3$.
Therefore, we obtain $H \simeq \mathbb{Z}_2$ or $H \simeq \mathbb{Z}_2^2$ also in this case by replacing $X,Y$ by $X',Y'$.
\end{proof}
\\
The set $S_{pre}(\sigma)$ may be fixed by $\tau \notin G_{\sigma}$ if 
the group generated by $\sigma$ and $\tau$ is isomorphic to $\mathbb{Z}_2^2$.
Indeed, let $BP_4$ be the point configuration defined by
\begin{align*}
\begin{pmatrix}
1 & 0 & -1 & 0  & 0 & 0 \\
0 & 1 & 0  & -1 & 0 & 0 \\
0 & 0 & 0  & 0  &  1 & -1
\end{pmatrix}
\end{align*}
and  ${\cal M}_{BP_4}$ be the associated oriented matroid of $Bi_4$.
It has rotational symmetries $\sigma := (2 4) (5 6)$ and $ \tau := (1 3) (2 4)$.
Then, we have $S_{pre}(\sigma) = \{ \{ 2, 4\}, \{ 5, 6\}, \{ 1 \}, \{ 3 \} \}$.
This is fixed by $\tau$.
\\
\begin{figure}[h]
\begin{center}
\includegraphics[bb = 80 189 504 761, scale=0.25]{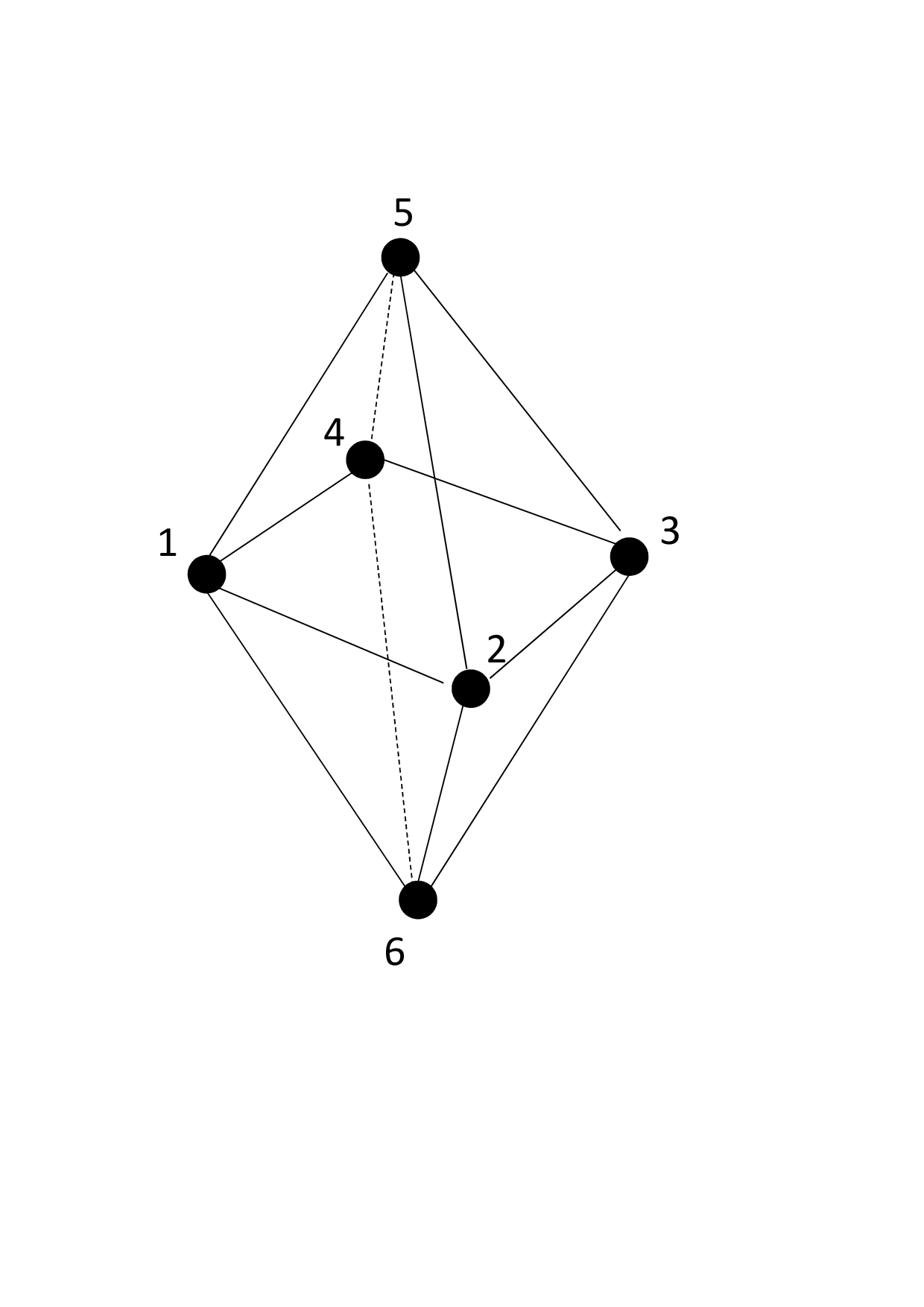}
\caption{Point configuration $BP_4$}
\label{fig:bypiramid}
\end{center}
\end{figure}
\\
The following two propositions show that there are not two such rotational symmetries $\tau$.

\begin{prop}
Let ${\cal M}$ be a simple acyclic oriented matroid of rank $4$ with a FPA rotational symmetry group $R_f({\cal M})$.
If there exist $\sigma,\tau,\pi \in R_f({\cal M})$ of order $2$ such that
the group generated by $\sigma$ and $\tau$, and that generated by $\sigma$ and $\pi$ are isomorphic to $\mathbb{Z}_2^2$,
then there is a cyclic group $G \subseteq R_f({\cal M})$ such that $\sigma \in G$ and $\pi \tau \in G$, or
the group generated by $\sigma$, $\tau$ and $\pi$ is isomorphic to $\mathbb{Z}_2^3$.
\end{prop}
\begin{proof}
Let $p$ be the order of $\pi \tau$.
Note that $\sigma (\pi \tau) = (\pi \tau )\sigma$.
If $p > 2$, then there is a cyclic group $G \subseteq R_f({\cal M})$ such that $\sigma \in G$ and $\pi \tau \in G$ by Proposition \ref{prop:orbit_action}.
If $p=2$, then the group generated by $\tau$ and $\pi$ is isomorphic to $\mathbb{Z}_2^2$.
\end{proof}
\\
Note that only the case $R_f({\cal M}) \simeq \mathbb{Z}_2^3$ is possible under the assumption that $G_{\sigma} \in G^{II}$.
However, this is also proved to be impossible by the following proposition.
\begin{prop}
Let ${\cal M}$ be a simple acyclic oriented matroid of rank $4$  with a FPA rotational symmetry group $R_f({\cal M})$.
Then $R_f({\cal M})$ does not contain a subgroup isomorphic to $\mathbb{Z}_2^3$.
\end{prop}
\begin{proof}
A proof can be done similarly to Proposition \ref{prop:rank3_transitivity3}.
Suppose that there exists $G \subseteq R_f({\cal M})$ such that $G \simeq \mathbb{Z}_2^3$.
Let $\sigma,\tau,\pi$ be generators of $G$ and $X$ be a $G$-orbit.
Then, we have $\rank(X)=3$ or $\rank(X)=4$.
If $\rank(X)=3$, it is a contradiction since $G({\cal M}|_X)$ does not contain a subgroup isomorphic to $\mathbb{Z}_2^3$.
If $\rank(X)=4$, consider a fixed point $q$ of $G$ such that ${\cal M} \cup q$ is acyclic, and let ${\cal N} := ({\cal M}|_X \cup q) /q$.
Then, ${\cal N}$ is not simple
since the symmetry group of a simple oriented matroid of rank $3$ cannot have a subgroup isomorphic to $\mathbb{Z}_2^3$.
Note that each element of ${\cal N}$ has exactly one antiparallel element and no parallel element (other than itself).
For $x \in X$, there exists $a \in R({\cal N})$ such that $a(x)$ is the antiparallel element of $x$.
Then $a$ must be the map that takes each element of ${\cal N}$ to its antiparallel element.
This leads to that $a$ is a reflection symmetry of ${\cal N}$, which is a contradiction.
\end{proof}
\\
\\
Based on the above observation, let us design a set $S^{II}$.
For a rank $2$ $G_{\sigma}$-orbit $O_0:= \{ x,\sigma(x)\}$ with $O_0 \neq \tau (O_0)$, 
consider the pairs
$(O_0,\tau(O_0))$ and $(\tau (O_0),O_0)$, where $\tau$ is a rotational symmetry such that the group generated by $\sigma$ and $\tau$
is isomorphic to $\mathbb{Z}_2^2$:
\begin{align*}
S_1(\sigma ) &:= S_{pre}(\sigma) \cup \{ (O_0,\tau(O_0)) \}, \\
S_2(\sigma ) &:= S_{pre}(\sigma) \cup \{ (\tau (O_0),O_0) \}.
\end{align*}
If there does not exist $\tau \in R_f({\cal M})$ such that the group generated by $\sigma$ and $\tau$
is isomorphic to $\mathbb{Z}_2^2$, we set $S_1(\sigma)=(1,S_{pre}(\sigma))$ and $S_2(\sigma)=(2,S_{pre}(\sigma))$.
Then, the stabilizer subgroup of $S_1(\sigma)$ and that of $S_2(\sigma)$ are equal to $G_{\sigma}$.
Note that $\tau \cdot S_1 (\sigma) = S_2(\sigma)$ and that $\tau \cdot S_2(\sigma) = S_1(\sigma)$
if $\tau \in R_f({\cal M}) \setminus G_{\sigma}$ satisfies $\tau \cdot S_{pre}(\sigma) = S_{pre}(\sigma)$.
Let us define $S^{II}$ as the collection of $S_1(\sigma)$ and $S_2(\sigma)$:
\[ S^{II}:= \{ S_1(\sigma) \mid \sigma \in R_f({\cal M}), G_{\sigma} \in G^{II}\} \cup \{ S_2(\sigma) \mid \sigma \in R_f({\cal M}), G_{\sigma} \in G^{II}\}. \]
The following lemma assures that there is a one-to-one correspondence between $S^{II}$ and $G^{II}$. 
\begin{lem}
Let $\sigma, \tau \in R_f({\cal M})$ be a rotational symmetry  with $G_{\sigma}, G_{\tau} \in G^{II}$.
Then $S_i(\sigma ) = S_i(\tau ) \Rightarrow \sigma = \tau$, for $i=1,2$.
\end{lem}
\begin{proof}
Let $\{ x, \sigma (x) \}$ be a rank $2$ $G_{\sigma}$-orbit.
Then $S_i(\sigma ) = S_i(\tau )$ leads to $\{ x, \sigma (x) \} = \{ x, \tau (x) \}$ and thus to $\sigma (x) = \tau (x)$.
For a rank $1$ orbit $\{ y \}$ (when $\sigma (y) =y$), we have $\tau (y) = y$.
Therefore, we have  $\sigma (y) = \tau (y)$ if $S_i(\sigma ) = S_i(\tau )$.
The same applies to all the orbits and thus $\sigma = \tau$.
\end{proof}

\subsection{Classification}
\label{subsec:classification}
Now let us classify FPA rotational symmetry groups of simple acyclic oriented matroids of rank $4$.
From here on, we simply write $G$ instead of $R_f({\cal M})$.

Consider the group action of $G$ on the set $S:= S^{I} \cup S^{II}$.
Let $r$ be the number of $G$-orbits in $S$.
First, recall the well-known formula (Cauchy-Frobenius lemma):
\[ r = \frac{1}{|G|}\sum_{\sigma  \in G}{f(\sigma)},\] 
where $f(\sigma )$ is the number of fixed points of $\sigma$ in $S$.
If $\sigma = \id$, then $f(\sigma ) = |S|$ and otherwise $f(\sigma ) = 2$.
\begin{lem}
$f(\sigma ) = 2$ for all $\sigma \in G \setminus \{ \id \}$.
\end{lem}
\begin{proof}
Clearly, it holds that $\sigma \cdot S(\sigma' )= S(\sigma' )$, where $\sigma'$ is a generator of a maximal cyclic subgroup of $G$ to which
$\sigma$ belongs if the order of $\sigma'$ is $2$.
The same applies to $T_1(\sigma' )$ and $T_2(\sigma')$ if the order of $\sigma'$ is greater than $2$.
Therefore, we have $f(\sigma ) \geq 2$.
It is trivial that $f(\sigma ) \leq 2$.
\end{proof}
\\
Therefore, we have
\begin{align*} 
r = \frac{1}{|G|}\{ |S| + 2(|G|-1) \}. 
\end{align*}
For $x_i \in S_i$, let $G_i$ be the stabilizer subgroup of $x_i$.
Then $|S_i|=\frac{|G|}{|G_i|}$.
Note that $|G_i| \geq 2$ and thus
\begin{align*} 
r = 2+\frac{1}{|G|}(|S_1|+ \dots + |S_r|-2) = 2 - \frac{2}{|G|} + (\frac{1}{|G_1|} + \dots + \frac{1}{|G_r|}) \leq 2 -\frac{2}{|G|} + \frac{r}{2}. 
\end{align*}
It follows that $r < 4$.
Also, note that $r \geq 2$.
Therefore, we have $r=2$ or $r=3$.
If $r=2$, then $|S_1|+|S_2|=2$ and thus $|S_1|=|S_2|=1$.
Let $s \in S_1$.
Then $G \cdot s = s$ and thus $G$ is a cyclic group.
If $r=3$, we have 
\begin{align*}
1 < 1+\frac{2}{|G|} = \frac{1}{|G_1|} + \frac{1}{|G_2|} + \frac{1}{|G_3|}.
\end{align*}
Therefore, possible types of $(|G_1|,|G_2|,|G_3|,|G|)$ ($|G_1| \geq |G_2| \geq |G_3|$) are classified into
\begin{align*} 
(n,2,2,2n) \ (n \geq 2), (3,3,2,12), (4,3,2,24), (5,3,2,60).
\end{align*}
\noindent
(i) $(|G_1|,|G_2|,|G_3|,|G|) = (n,2,2,2n)$ ($n \geq 2$).

In this case, we have
$|S_1| = \frac{2n}{n} = 2$, $|S_2| = |S_3| =\frac{2n}{2}=n$ and $|S|=2n+2$.
Let $g,h_1,h_2$ be generators of $G_1,G_2,G_3$ respectively.
Let $\{ s,t \} := S_1$. 

If $n = 2$, then we have $G \simeq \mathbb{Z}_2^2 \simeq D_4$.
If $n \geq 3$, then we have $s,t \in S^{I}$ and thus the flat ordering $t$ must be the reverse ordering of $s$.
We have $h_1 \cdot s = t$ and $h_2 \cdot s = t$.
Therefore, $h_1h_2^{-1} \cdot t = t$ and thus $h_1h_2^{-1} \in G_1$.
This implies that $G$ is generated by $g$ and $h_1$.
Since $h_1 \cdot X_1 = X_2$ for some rank $3$ $G_1$-orbits $X_1,X_2$,
we have $h_1^2 = \id$ and $(h_1g)^2=\id$ by Proposition \ref{prop:orbit_action}. 
Therefore, the elements of $G$ are covered by $\id,g,g^2,\dots,g^{n-1},h_1g,h_1g^2,\dots,h_1g^{n-1}$.
Since $|G|=2n$, all of them must be distinct. 
Therefore, $G$ is isomorphic to the dihedral group $D_{2n}$.
\\
\\
(ii) $(|G_1|,|G_2|,|G_3|,|G|) = (3,3,2,12)$.

A group of order $12$ is isomorphic to one of $\mathbb{Z}_{12},D_{12},A_4,\mathbb{Z}_2 \times \mathbb{Z}_6$ and $Q_{12}$
(One can check it by using the GAP Small Groups Library~\cite{BEO}, for example).
Here, $Q_{4n}$ is the dicyclic group defined by $\langle x,y \mid x^{2n} = \id, x^2 = y^n, yxy^{-1}=x^{-1} \rangle$.
Since $G$ has two non-conjugate cyclic subgroups of order $3$,
$G$ is isomorphic to none of $\mathbb{Z}_{12}$, $D_{12}$, $\mathbb{Z}_2 \times \mathbb{Z}_6$ and $Q_{12}$.
The remaining possibility is $G \simeq A_4$.
\\
\\
(iii)  $(|G_1|,|G_2|,|G_3|,|G|) = (4,3,2,24)$.

A group of order $24$ is isomorphic to one of $\mathbb{Z}_{24}$, $\mathbb{Z}_2 \times \mathbb{Z}_{12}$, $\mathbb{Z}_2^2 \times \mathbb{Z}_6$, $D_{24}$, $Q_{24}$, $\mathbb{Z}_2 \times D_{12}$,
$\mathbb{Z}_2 \times Q_{12}$, $\mathbb{Z}_2 \times A_4$, $\mathbb{Z}_3 \times D_8$, $\mathbb{Z}_3 \times Q_8$, $\mathbb{Z}_4 \times D_6$, $SL(2,3):= \langle a,b,c \mid a^3=b^3=c^2=abc \rangle$, $S_4$,
$P:=\langle a,b \mid a^3=b^8=\id, bab^{-1}=a^{-1}\rangle$ and $Q:=\langle a,b \mid a^3=b^4=c^2=\id,bab^{-1}=a^{-1}, cbc^{-1}=b^{-1},ac=ca \rangle$
(One can check it by using the GAP Small Groups Library~\cite{BEO}, for example).

By Proposition \ref{prop:orbit_action}, $G$ is isomorphic to none of
$\mathbb{Z}_2 \times \mathbb{Z}_{12}$, $\mathbb{Z}_2^2 \times \mathbb{Z}_6$, $Q_{24}$, $\mathbb{Z}_2 \times D_{12}$, $\mathbb{Z}_2 \times Q_{12}$,
$\mathbb{Z}_2 \times A_4$, $\mathbb{Z}_3 \times D_8$, $\mathbb{Z}_3 \times Q_8$, $\mathbb{Z}_4 \times D_6$, $P$ and $Q$.
Since the order of each maximal cyclic subgroup of $G$ must be one of $4,3$ and $2$,
we have $G \not\simeq \mathbb{Z}_{24}$ and $G \not\simeq D_{24}$.
Since $SL(2,3)$ contains (more than) two non-conjugate cyclic subgroups of order $3$, we have $G \not\simeq SL(2,3)$.
The remaining possibility is $G \simeq S_4$.
\\
\\
(iv)  $(|G_1|,|G_2|,|G_3|,|G|) = (5,3,2,60)$.

A group of order $60$ is isomorphic to one of $\mathbb{Z}_{60}$, $\mathbb{Z}_2 \times \mathbb{Z}_{30}$, $D_{60}$, $Q_{60}$, $\mathbb{Z}_3 \times D_{20}$, $\mathbb{Z}_3 \times Q_{20}$,
$\mathbb{Z}_5 \times D_{12}$, $\mathbb{Z}_5 \times Q_{12}$, $\mathbb{Z}_5 \times A_4$, $D_6 \times D_{10}$, $A_5$,
$R:=\langle a,b \mid a^{15}=b^4=\id,bab^{-1}=a^2 \rangle$ and $S:=\langle a,b \mid a^{15}=b^4=\id,bab^{-1}=a^7\rangle$
(One can check it by using the GAP Small Groups Library~\cite{BEO}, for example).

By Proposition \ref{prop:orbit_action}, $G$ is isomorphic to none of $\mathbb{Z}_2 \times \mathbb{Z}_{30}$, $\mathbb{Z}_3 \times D_{20}$, $\mathbb{Z}_3 \times Q_{20}$,
$\mathbb{Z}_5 \times D_{12}$, $\mathbb{Z}_5 \times Q_{12}$, $\mathbb{Z}_5 \times A_4$, $D_6 \times D_{10}$, $R$ and $S$.
Since the order of each maximal cyclic subgroup of $G$ is one of $5,3$ and $2$,
we have $G \not\simeq \mathbb{Z}_{60}$ and $G \not\simeq D_{60}$, and $G \not\simeq Q_{60}$.
The remaining possibility is $G \simeq A_5$.
\\
\\
Therefore, a FPA rotational symmetry group of a simple acyclic oriented matroid of rank $4$ must be isomorphic to one of
$\mathbb{Z}_n$ ($n \geq 1$), $D_{2n}$ ($n \geq 1$), $A_4$, $S_4$ and $A_5$.

Next, we prove that all of the above groups are indeed FPA rotational symmetry groups of some simple acyclic oriented matroids of rank $4$.
\\
\\
(i) $G \simeq \mathbb{Z}_n$.

For $n=1, 2,3,\dots$, let us define $3$-dimensional point configurations $P_n$ as follows.
\begin{align*}
P_1&:=\{ (0,0,0), (0,1,0), (1,0,0), (1/2,1/2,0), (0,0,1)\}, \\
P_2&:=\{ (-1,1,0),(0,1,0),(1,1,0), (-1,-1,0),(0,-1,0),(1,-1,0), (0,0,1)\}, \\
P_n&:=\{ ({\rm cos}\frac{2k}{n}\pi,{\rm sin}\frac{2k}{n}\pi,-1) \mid k \in [n] \} \cup 
\{ ({\rm cos}\frac{2k-1}{n}\pi,{\rm sin}\frac{2k-1}{n}\pi,1) \mid k \in [n] \} \ (n \geq 3),
\end{align*}
and ${\cal M}_{P_n}$ be the associated oriented matroid of $P_n$.
Then, we have $G=R({\cal M}_{P_n}) \simeq \mathbb{Z}_n$.
\\
\\
(ii) $G \simeq  D_{2n}$.

Since $D_2 \simeq \mathbb{Z}_2$, we consider the case $n \geq 2$.
For $n = 2,3,\dots$, let us define $3$-dimensional point configurations $Q_n$ as follows.
\begin{align*}
Q_2 &:= \{ (2,0,0), (0,2,0), (-2,0,0), (0,-2,0), (1,0,0), (-1,0,0), (0,0,2), (0,0,-2) \}, \\
Q_n&:=\{ ({\rm cos}(\frac{2k\pi}{n}),{\rm sin}(\frac{2k\pi}{n}),0) \mid k \in [n] \} \cup \{ (0,0,-1), (0,0,1) \} \ (n \geq 3)
\end{align*}
and
${\cal M}_{Q_n}$ be the associated oriented matroid of $Q_n$.
Note that $R({\cal M}_{Q_n}) \supseteq R(Q_n) \simeq D_{2n}$, where $R(Q_n)$ is the geometric rotational symmetry group of $Q_n$.
By the above discussion, $R({\cal M}_{Q_n})$ must be isomorphic to one of $\mathbb{Z}_k$ ($k \geq 1$), $D_{2k}$ ($k \geq 1$), $A_4$, $S_4$ and $A_5$.
Simple discussion yields that $G=R({\cal M}_{Q_n}) \simeq D_{2n}$.
\\
\\
(iii) $G \simeq  A_4$ or $G \simeq  S_4$, or $G \simeq A_5$.

The associated oriented matroid of the $3$-simplex (resp. the $3$-cube, Icosahedron) has the rotational symmetry group isomorphic to 
$A_4$ (resp. $S_4$, $A_5$).
\begin{figure}[h]
\begin{center}
\includegraphics[scale=0.30,clip]{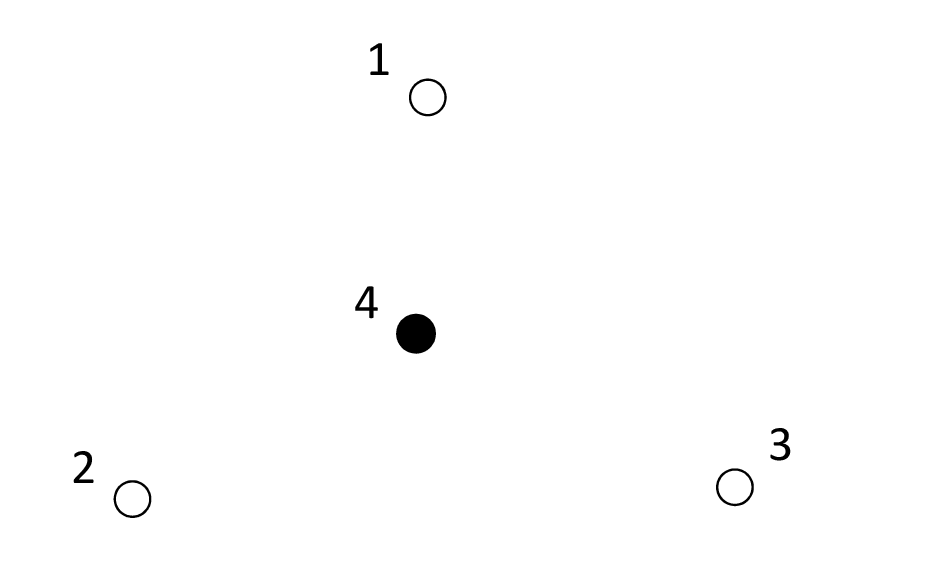}
\includegraphics[scale=0.30,clip]{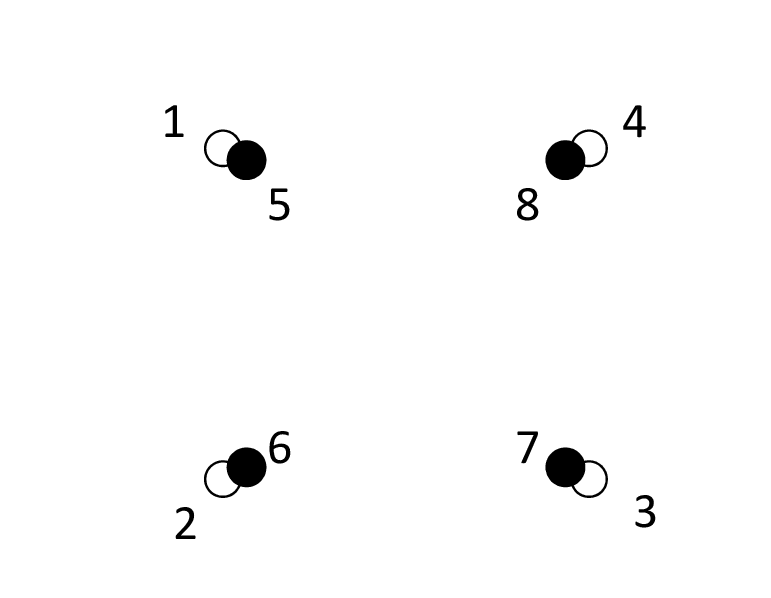}
\includegraphics[scale=0.30,clip]{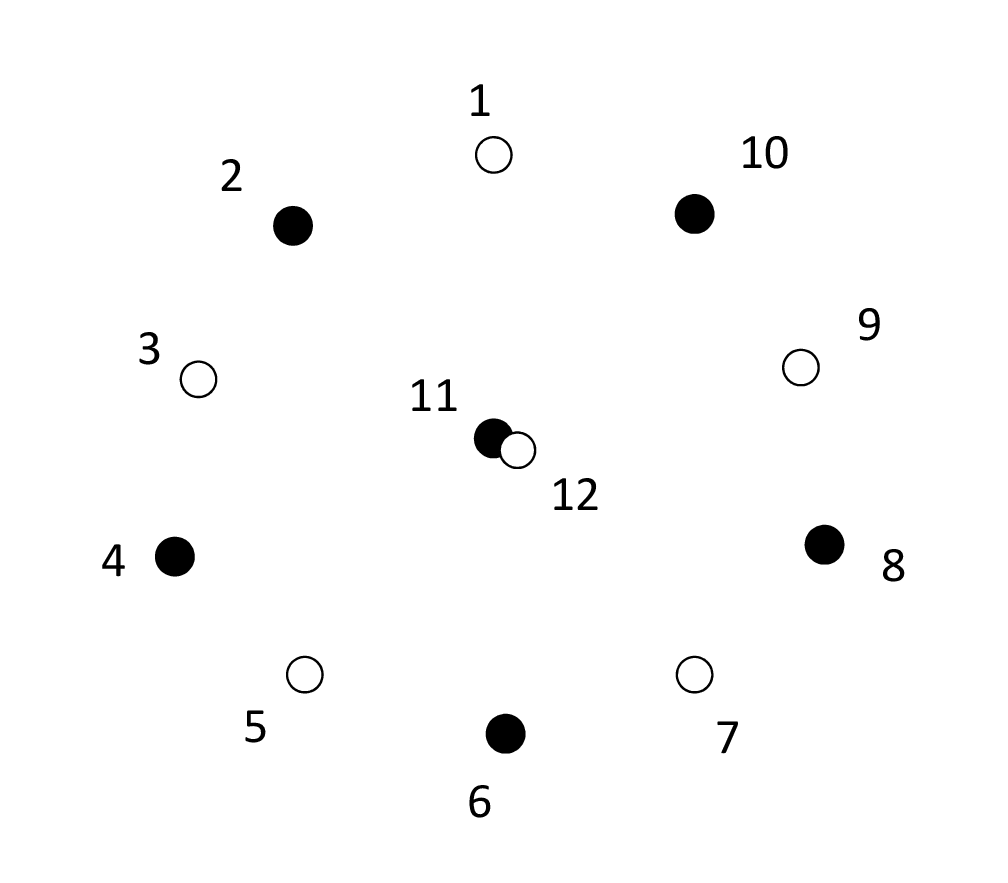}
\end{center}
\caption{The $3$-simplex [left], the $3$-cube [center] and Icosahedron [right] contracted by each fixed point (depicted as signed point configurations)}
\label{fig:regular}
\end{figure}

\begin{thm}
\label{main_thm}
Let ${\cal M}$ be a simple acyclic oriented matroid of rank $4$ and $R_f({\cal M})$ a FPA subgroup of $R({\cal M})$.
Then, $R_f({\cal M})$ is isomorphic to one of 
$\mathbb{Z}_n$ ($n \geq 1$), $D_{2n}$($n \geq 1$), $A_4$, $S_4$ and $A_5$.
\end{thm}

\section{Concluding remarks}
In this paper, we have investigated properties of symmetries of oriented matroids.
It was shown that some fixed point properties of geometric rotational symmetries can naturally be extended to the setting of oriented matroids.
We classified full and rotational symmetry groups of simple oriented matroids of rank $3$.
Also, we made classification of FPA rotational symmetry groups of simple acyclic oriented matroids of rank $4$.
This shows that the classical approach to the classification of finite subgroups of $SO(3)$ can be followed only by using the axioms of oriented matroids
and the FPA property.
As future work, it may be interesting to investigate the following topics.
\begin{itemize}
\item Does the  symmetry group of every simple acyclic oriented matroid has FPA property?
\item Classification of FPA full symmetry groups of simple acyclic oriented matroids of rank $4$.
\item Classification of rotational and full symmetry groups of simple oriented matroids of rank $4$.
\item Classification of rotational and symmetry groups of simple acyclic oriented matroids of higher rank.
\item Is there any simple acyclic oriented matroid of rank $r$ whose symmetry group is not isomorphic to any finite subgroups of the orthogonal group $O(r-1)$?
\end{itemize}

\section*{Acknowlegement}
The author would like to thank an anonymous referee for many helpful comments,
especially for pointing out an error of Theorem 5.5 in the previous version.
This research is partially supported by JSPS Grant-in-Aid for Young Scientists (B) 26730002.

\section*{Appendix 1: properties concerning alternating matroids of rank $3$}
Here, we give proofs of the propositions concerning alternating matroids of rank $3$ that were not proved above.
These should be folklore, but the author could not find appropriate references.
\begin{prop}
\label{prop:alter_ordering}
Let ${\cal M}=(E,\{ \chi, -\chi \})$ be a simple oriented matroid of rank $3$, $X \subseteq E$ and $p:=|X|$.
Suppose that ${\cal M}|_X \simeq A_{3,p}$ and that
$x_0,\dots,x_{p-1} \in X$ are such that
$\chi(x_i,x_j,x_k)=+$ for all $i,j,k \in \mathbb{Z}$ with $0 \leq i < j < k \leq p-1$ (take the negative of $\chi$ if necessary).
For $i=0,\dots,p-1$, let $V_i$ be the cocircuit of ${\cal M}$ such that
\begin{align*} 
V_i(x_i)=V_i(x_{i+1})=0, \text{$V_i(x)=+$ for all $x \in X \setminus \{ x_i,x_{i+1}\}$,}
\end{align*}
where $x_p:=x_0$.
Let us consider $e \in E \setminus X$.
Then, the sign sequence $V_0(e),V_1(e),\dots,V_{p-1}(e)$ must be one of the following forms:
\[ +\dots +- \dots -+ \dots +, \ +\dots +0- \dots-+ \dots +, \ +\dots +- \dots-0+ \dots +, \ +\dots +0- \dots-0+ \dots +,\]
\[ -\dots -+ \dots +- \dots -, \ -\dots -0+ \dots+- \dots -, \ -\dots -+ \dots+0- \dots -, \ -\dots -0+ \dots+0- \dots -,\]
where $+\dots +$ and $- \dots -$ may be empty.
\end{prop}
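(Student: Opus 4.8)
The plan is to reduce the statement to a fact about the single added element inside the rank-$3$ oriented matroid ${\cal N}:={\cal M}|_{X\cup\{e\}}$, and then to prove that fact by a convex-position argument. First I would show that $V_i(e)=\chi(x_i,x_{i+1},e)$ for all $i$, with the cyclic convention $x_p:=x_0$ (so in particular $V_p=V_0$, consistent with the statement). Indeed, $V_i$ is the cocircuit of ${\cal M}$ whose zero set is $Flat(\{x_i,x_{i+1}\})$, and in rank $3$ such a cocircuit equals $f\mapsto\pm\,\chi(x_i,x_{i+1},f)$; the normalization $V_i(x)=+$ for $x\in X\setminus\{x_i,x_{i+1}\}$ fixes the sign once one checks $\chi(x_i,x_{i+1},x_j)=+$ for all $j\ne i,i+1$, which is a short sign-of-permutation computation from the hypothesis that $x_0,\dots,x_{p-1}$ defines the alternating matroid in this order (the wrap-around pair $\{x_{p-1},x_0\}$ being handled by the same $3$-cycle bookkeeping). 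Since $\chi(x_i,x_{i+1},e)$ depends only on ${\cal N}$, which is again simple of rank $3$, it remains to prove: the sign sequence $\bigl(\chi(x_0,x_1,e),\dots,\chi(x_{p-1},x_0,e)\bigr)$ in ${\cal N}$ is one of the displayed forms.

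For the latter I would invoke the topological representation theorem \cite[Chapter 5]{BLSWZ99} and represent ${\cal N}$ by an arrangement of oriented pseudolines $\{\ell_f:f\in X\cup\{e\}\}$, in which the cocircuits of ${\cal N}$ are the vertices of the arrangement and $V_i$ is the vertex $\ell_{x_i}\cap\ell_{x_{i+1}}$. Because ${\cal N}|_X$ is the alternating matroid, its positive tope $T^+=(+\dots+)$ is a $2$-cell whose boundary is a topological $p$-gon whose vertices, read cyclically, are exactly $V_0,V_1,\dots,V_{p-1}$, with the edge joining $V_i$ to $V_{i+1}$ lying on $\ell_{x_{i+1}}$. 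I would justify this from the cocircuit side: the cocircuits $C$ of ${\cal N}|_X$ with $C\le T^+$ are precisely the $V_i$, since among the $2$-subsets of $X$ only the cyclically consecutive ones have all remaining elements of $X$ on a single side. Now $\ell_e$ is a single pseudoline, so $\ell_e\cap\overline{T^+}$ is empty, a point, or a chord with both ends on $\partial T^+$; in each case $\ell_e$ contains at most two of the vertices $V_0,\dots,V_{p-1}$ (exactly the $i$ with $V_i(e)=0$) and splits the remaining vertices into at most two arcs, one on its $+$ side and one on its $-$ side. Reading this back, the sequence $V_0(e),\dots,V_p(e)$ has at most one maximal run of $+$'s and at most one maximal run of $-$'s, with any $0$ occurring only at a junction between the two runs --- exactly the list in the statement. (Two consecutive zeros would force an edge of $T^+$, which lies on some $\ell_{x_{i+1}}$, onto $\ell_e$, i.e.\ $\ell_e=\ell_{x_{i+1}}$, contradicting simplicity.)

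I expect the main obstacle to be this last, geometric-looking step: transferring ``a line meets the boundary of a convex cell at most twice'' into the oriented-matroid setting rigorously, i.e.\ establishing cleanly that the positive tope of an alternating rank-$3$ oriented matroid is a topological $p$-gon with vertex sequence $V_0,\dots,V_{p-1}$ and that $\ell_e$ enters and leaves it at most once. An alternative, purely combinatorial route --- more in the spirit of the rest of the paper but messier --- would instead: (a) rule out two consecutive zeros directly, since $Flat(\{x_i,x_{i+1}\})$ and $Flat(\{x_{i+1},x_{i+2}\})$ are distinct rank-$2$ flats whose intersection is just $\{x_{i+1}\}$, forcing $e=x_{i+1}$, impossible; and (b) observe that for every pair $i,j$ with $V_i(e)=-V_j(e)\ne0$, vector elimination of $e$ from $V_i,V_j$ inside ${\cal N}$ yields a covector equal to $(+\dots+)$ on $X$ and $0$ on $e$ --- a ``crossing'' of $T^+$ by $\ell_e$ --- and then deduce from $rank(X)=3$ that a cyclic $+,-,+,-$ pattern (which would force four such crossings) cannot occur, with a small extra argument excluding a stray zero interior to a run. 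The delicate cases in this route are those where the offending indices are consecutive around the polygon; the topological route avoids these at the cost of leaning on \cite{BLSWZ99}.
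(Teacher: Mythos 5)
Your reduction to the claim that the cyclic sign sequence $\bigl(\chi(x_0,x_1,e),\dots,\chi(x_{p-1},x_0,e)\bigr)$ has at most one block of $+$'s and one block of $-$'s is fine, and both of your routes point in a workable direction; but in each of them the decisive step is exactly the one you leave open, so as written this is a plan with a genuine gap rather than a proof. In route (a) everything rests on the assertion that $\ell_e\cap\overline{T^+}$ is empty, a point, or a single chord. That is not a formal consequence of merely invoking the topological representation theorem: a priori a pseudoline could meet the closed cell $\overline{T^+}$ in several components (say a chord plus an isolated touching at a vertex), and excluding this is precisely the combinatorial content of the proposition. It can be done --- for instance, there is at most one covector of ${\cal N}$ that is $+$ on all of $X$ and $0$ on $e$; any cocircuit that is $0$ on $e$ and nonnegative on $X$ conforms to it and hence lies in the closure of the corresponding $1$-cell; and the rank-$2$ contraction ${\cal N}/e$ has at most two nonnegative cocircuits --- but none of this is in your write-up, and you yourself flag it as the main obstacle. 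Likewise, in route (b) the phrase ``deduce from $rank(X)=3$ that a cyclic $+,-,+,-$ pattern cannot occur'' is the whole theorem: producing a ``crossing'' covector by eliminating $e$ is the easy half, and the hard half (why there can be at most two crossings/zero positions, including the delicate adjacent-index cases you mention) is not carried out.

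For comparison, the paper's proof is a short, purely combinatorial version of your route (b). Assuming three sign changes, at positions $l_1<l_2<l_3$, it eliminates $e$ between $V_{l_i}$ and $V_{l_i+1}$ to obtain cocircuits $W_i$ vanishing exactly on $Flat(\{e,x_{l_i+1}\})$ and positive on the rest of $X$ ($i=1,2,3$); it then eliminates $x_{l_2+1}$ between $W_1$ and $-W_3$, producing a covector that vanishes on $\{e,x_{l_2+1}\}$ but is $-$ at $x_{l_1+1}$ and $+$ at $x_{l_3+1}$. Since in a simple rank-$3$ oriented matroid the cocircuit with zero set $Flat(\{e,x_{l_2+1}\})$ is unique up to sign, this contradicts $W_2$, which is $+$ at both of those elements; the zero-containing patterns are handled by the same argument with minor modifications. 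If you complete your sketch (b) along these lines (or prove the at-most-two-nonnegative-cocircuits statement for ${\cal N}/e$), your argument becomes essentially the paper's; the topological route (a) is genuinely different in flavor, but it is only as strong as the connectedness/convexity statement about $\ell_e\cap\overline{T^+}$, which must be proved rather than asserted.
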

\begin{proof}
In this proof, we only consider the case where $V_i(e) \neq 0$ for all $i=0,\dots,p-1$.
The following discussion can be applied to the other case by a slight modification.

We prove by contradiction.
Let us assume that there exist $l_1, l_2, l_3 \in \mathbb{Z}$ with $0 \leq l_1 < l_2 < l_3 \leq p-2$ such that
\begin{align*} 
V_{l_1}(e)=+,V_{l_1+1}(e)=-, V_{l_2}(e)=-,V_{l_2+1}(e)=+,V_{l_3}(e)=+,V_{l_3+1}(e)=-.
\end{align*}
Let $W_1,W_2,W_3$ be the cocircuits obtained by applying vector elimination to $V_{l_1},V_{l_1+1}, e$
and $V_{l_2},V_{l_2+1}, e$ and $V_{l_3},V_{l_3+1}, e$, respectively.
Then, the cocircuit $W_i$ satisfies
\begin{align*} 
W_i(e)=W_i(x_{l_i+1})=0, \text{ $W_i(x)=+$ for all $x \in X \setminus \{ x_{l_i+1}\}$,}
\end{align*}
for $i=1,2,3$.
Apply vector elimination to $W_1,-W_3$ and $x_{l_2+1}$.
Then, we obtain a cocircuit $W_4$ such that
\begin{align*} 
W_4(e)=W_4(x_{l_2+1})=0, W_4(x_{l_1+1})=-,W_4(x_{l_3+1})=+,
\end{align*}
which is a contradiction (compare with $W_2$).
A contradiction also occurs if there exist $l'_1, l'_2, l'_3 \in \mathbb{Z}$ with
$0 \leq l'_1 < l'_2 < l'_3 \leq p-1$ such that
\begin{align*} 
V_{l'_1}(e)=-,V_{l'_1+1}(e)=+, V_{l'_2}(e)=+,V_{l'_2+1}(e)=-,V_{l'_3}(e)=-,V_{l'_3+1}(e)=+.
\end{align*}
This proves the proposition.
\end{proof}

\begin{prop}
\label{prop:rank_relabeling}
Every matroid polytope ${\cal M}=(E,\{ \chi, -\chi\})$ of rank $3$ is isomorphic to an alternating matroid.
\end{prop}
\begin{proof}
We proceed by induction on $|E| $ ($=: n$).
By the induction hypothesis, we have ${\cal M} \setminus \{ e \} \simeq A_{3,n-1}$ for $e \in E$.
Let us relabel the elements of $E$ by $1,\dots,n$ so that $e$ is relabeled by $n$ and $\chi (i,j,k) = +$ for all $i,j,k \in \mathbb{Z}$ with
$1 \leq i < j < k \leq n-1$ (take the negative of $\chi$ if necessary).
For $i=1,\dots,n-1$, let $V_i$ be the cocircuit of ${\cal M}$ such that
\begin{align*} 
V_i(i)=V_i(i+1)=0, \text{$V_i(x)=+$ for all $x \in [n-1] \setminus \{ i,i+1\}$,}
\end{align*}
where $(n-1) + 1$ is interpreted as $1$.
By Proposition \ref{prop:alter_ordering}, the sequence $V_1(n),V_2(n),\dots,V_{n-1}(n)$ is one of the following forms:
\[  +\dots +-\dots -+\dots +, \ -\dots -+\dots +-\dots -\]
(since ${\cal M}$ is a matroid polytope of rank $3$, the sequence does not contain $0$).
Since ${\cal M}$ is a matroid polytope, the sequence is neither of the form $+\dots +$ nor $-\dots -$ 
(If the sequence is $+\dots +$, then $e$ is inside ${\cal M} \setminus \{ e \}$. If it is $-\dots -$, then the all-positive vector is a vector of ${\cal M}$.
See also Proposition \ref{prop:FP_inside}).
Without loss of generality, we assume that the sequence is of the form $+\dots +-\dots -$, where $+\dots +$ and $-\dots -$ are not empty.
Suppose that $l_- :=|\{ i \in [n-1] \mid V_i(n)=-\}| \geq 2$ and $l_+ :=|\{ i \in [n-1] \mid V_i(n)=+\}| \geq 2$.
Let $m:= {\rm argmax}\{ i \in [n-1] \mid V_i(n)=+\}$.
Consider the cocircuits $W_1,W_2$ and $W_3$ obtained respectively by vector elimination with respect to $V_{m}$, $V_{m+1}$ 
and $n$, with respect to $V_{n-1}$, $V_1$ and $n$, and with respect to $-V_m,V_{m+1}$ and $n-1$.
Then, $W_1, W_2$ and $W_3$ satisfy
\begin{align*}
& W_1(m+1) = W_1(n) = 0, W_1(x)=+ \text{ for all $x \in [n-1] \setminus \{ m+1,n\}$}, \\
& W_2(n-1) = W_2(n) = 0, W_2(x)=+ \text{ for all $x \in [n-1] \setminus \{ n-1,n\}$.} \\
& W_3(m+1) = W_3(n-1) = 0, W_3(m)=+, W_3(n)=+.
\end{align*}
(Remark that the elements $m,m+1,n-1$ and $n$ are all different by the assumption.)
This leads to that ${\cal M}|_{\{ m, m+1, n-1, n\}}$ is not a matroid polytope, which is a contradiction.
Therefore, we have $l_+=1$ or $l_- = 1$.

Let us first consider the case $l_-= 1$, i.e., the case $m=n-2$.
For each $i,j \in [n-1]$ $(i \neq j)$, let $V_{i,j}$ be the cocircuit of ${\cal M}$
such that 
\[ V_{i,j} (i) = V_{i,j}(j) = 0 \text{ and } V_{i,j}(n) = +.\]
For $k=2,\dots,n-3$, the cocircuit $V_{n-1,k}$ is obtained by vector elimination of $V_{n-2}$ and $-V_{n-1}$ with respect to $k$.
Since $V_{n-2}(1)=+$,
we have 
\[ V_{n-1,k}(1)=+ \text{ for all $k=2,\dots,n-2$.}\]
This means that 
\[ \chi (k,n-1,n)= \chi (k,n-1,1)=+ \text{ for all $k=2,\dots,n-2$}.\]
For $k=1,\dots,n-4$, the cocircuit $V_{n-2,k}$ is obtained by vector elimination of $V_{n-1,k}$ and $V_{k}$ with respect to $n-2$.
Recall that $V_{n-3}(1)=V_{n-2}(1)=+$. Thus, we have 
\[ V_{n-2,k}(1)=+ \text{ for all $k=1,\dots,n-2$.}\]
This means that 
\[ \chi (k,n-2,n)= \chi (k,n-2,1)=+ \text{ for all $k=1,\dots,n-3$}. \]
Continuing this discussion,  we have, for any $l=2,\dots,n-2$,
\[ \chi (k,l,n)= \chi (k,l,1)=+ \text{ for all $k=1,\dots,l-1$}.\]
Therefore, it holds that
\[ \chi (i,j,k) = + \text{ for all $i,j,k \in \mathbb{Z}$ with $1 \leq i < j < k \leq n$.}\]
In the case $l_+=1$, we have ${\cal M} \simeq {_{-\{ n \}}A_{3,n}}$ by the same discussion as above.
This implies that ${\cal M}$ is not a matroid polytope (since $A_{3,n}$ has the circuit $C$ with $C^+=\{ 1,n-1\}$ and $C^-=\{ 2,n\}$, 
$_{-\{ n \}}A_{3,n}$ has the circuit ${_{- \{ n \}}C}$ with ${_{- \{ n \}}C}^+=\{ 1 \}$ and  ${_{- \{ n \}}C}^-=\{ 2,n-1,n\}$), which is a contradiction.
As a conclusion, only the case $l_-=1$ is possible and thus we always have ${\cal M} \simeq A_{3,n}$.
\end{proof}

\begin{prop}
\label{prop:devide}
Let $x_0,\dots,x_{p-1}$ be the elements of $A_{3,p}$ that form an alternating matroid in this order.
For a covector $V$ of $A_{3,p}$ with $V(x_0)=0$,
 the sign sequence $V(x_1),V(x_2),\dots,V(x_{p-1})$ must be one of the following forms:
\[ -\dots -+ \dots +, \ 0-\dots -+ \dots +, \ -\dots -0+ \dots +, \ -\dots -+ \dots +0,\]
\[ +\dots +- \dots -, \ 0+\dots +- \dots -, \ +\dots +0- \dots -, \ +\dots +- \dots -0,\]
where $+\dots +$ and $- \dots -$ may be empty.
\end{prop}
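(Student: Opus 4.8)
The plan is to deduce the statement from the realizability of $A_{3,p}$ rather than from the oriented matroid axioms directly. Recall from Section~2 that $A_{3,p}$ is realizable; a standard realization is obtained from the moment curve: fix real numbers $t_0 < t_1 < \dots < t_{p-1}$ and set $v_i := (1,\,t_i,\,t_i^2)$ for $i = 0,\dots,p-1$. By the Vandermonde formula $\det(v_i,v_j,v_k)$ is a product of positive differences, hence positive, whenever $0 \le i < j < k \le p-1$, so $\chi(x_i,x_j,x_k) = +$ in this order and the configuration $v_0,\dots,v_{p-1}$ represents $A_{3,p}$. Since $A_{3,p}$ is realizable, every covector $V$ of $A_{3,p}$ has the form $V(x_i) = \sgn(c \cdot v_i)$ for some $c = (c_0,c_1,c_2) \in \mathbb{R}^3$.

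First I would set aside the zero covector (the case $c = 0$), which does not arise in the applications of this proposition, and assume $c \ne 0$. Then $f(t) := c_0 + c_1 t + c_2 t^2$ is a nonzero real polynomial of degree at most $2$ and $V(x_i) = \sgn(f(t_i))$ for every $i$; the hypothesis $V(x_0) = 0$ says precisely that $t_0$ is a root of $f$. The whole proof now rests on one elementary fact from real analysis: a nonzero polynomial of degree at most $2$ has at most two real roots, so at most one of $t_1,\dots,t_{p-1}$ is a further root of $f$, and the sign of $f$ is constant on each of the at most three open intervals into which its real roots cut $\mathbb{R}$.

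The remaining work is a finite case analysis according to the location of the second root $t^\ast$ of $f$ (or the absence of one), which I would break into: (a) $f$ has only the root $t_0$ (degree $1$, or a double root at $t_0$), or $t^\ast < t_0$, in which case $\sgn(f(t_i))$ is the same for all $i \ge 1$ and we get a single constant-sign block; (b) $t^\ast = t_j$ for some $1 \le j \le p-1$, giving a block of one sign, then a single $0$, then a block of the opposite sign, with the first or last block possibly empty according as $j = 1$ or $j = p-1$; (c) $t^\ast$ lies strictly between two consecutive $t_\ell, t_{\ell+1}$ (or to the right of $t_{p-1}$, or to the left of $t_1$), giving two constant-sign blocks of opposite sign and no extra zero. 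Matching these outcomes against the eight displayed patterns, and invoking the ``may be empty'' convention to absorb the degenerate sub-cases, completes the argument.

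There is no genuine obstacle here; the only points requiring a little care are the bookkeeping in the case split and the observation that at most one of $x_1,\dots,x_{p-1}$ can be a zero of $V$ (immediate from $\deg f \le 2$, or, staying within oriented matroid language, from the uniformity of $A_{3,p}$ together with the fact that the zero set of a nonzero covector has rank at most $2$). A fully combinatorial proof avoiding realizability is also available, mimicking the vector-elimination arguments used for Proposition~\ref{prop:alter_ordering}, but it is appreciably longer, and the realizable argument is both shorter and completely rigorous.
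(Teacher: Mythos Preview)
Your argument is correct and complete. The approach, however, is genuinely different from the paper's. The paper proves the statement purely combinatorially: it supposes by contradiction that the sign sequence contains a subsequence $+,-,+$ (or $-,+,-$) at positions $l_1<l_2<l_3$, takes the cocircuit $W$ with zero set $\{x_0,x_{l_2}\}$, applies vector elimination to $V$, $W$ and $x_{l_3}$, and obtains a cocircuit whose zero set is $\{x_0,x_{l_3}\}$ but whose signs at $x_{l_1}$ and $x_{l_2}$ are opposite---contradicting the uniqueness (up to global sign) of such a cocircuit in a uniform rank~$3$ oriented matroid.

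Your route via the moment-curve realization trades that axiomatic manipulation for an elementary fact about real quadratics, and is undeniably shorter and more transparent for this particular proposition. The paper's argument, by contrast, never leaves the covector axioms: it uses no coordinates and no realizability, which keeps it in the same register as the surrounding Propositions (in particular the companion Proposition~\ref{prop:alter_ordering}, whose proof it closely mirrors). For the present statement either style is perfectly rigorous, since $A_{3,p}$ is realizable; your explicit remark that the zero covector is excluded, and your observation that uniformity of $A_{3,p}$ forces at most one further zero among $x_1,\dots,x_{p-1}$, are points the paper leaves implicit.
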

\begin{proof}
Let us assume that there exist $l_1,l_2,l_3 \in \mathbb{Z}$ with $0 \leq l_1 < l_2 < l_3 \leq p-1$ such that
$V(x_{l_1})=+,V(x_{l_2})=-,V(x_{l_3})=+$.
Let $W$ be the cocircuit of $A_{3,p}$ such that
\begin{align*}
W(x_0)=W(x_{l_2})=0, W(x_i)=+ \text{ for $i=1,\dots,l_2-1$}, W(x_j)=- \text{ for $j=l_2+1,\dots,p-1$}
\end{align*}
and consider vector elimination of $V,W$ and $x_{l_3}$.
Then we obtain a cocircuit $Z$ such that
\begin{align*} 
Z(x_0)=Z(x_{l_3})=0, Z(x_{l_1})=+,Z(x_{l_2})=-,
\end{align*}
a contradiction.
Similarly, we obtain a contradiction if 
there exist $l'_1, l'_2, l'_3 \in \mathbb{Z}$ with
$0 \leq l'_1 < l'_2 < l'_3 \leq p-1$ such that
$V(x_{l'_1})=-,V(x_{l'_2})=+,V(x_{l'_3})=-$.
\end{proof}

\section*{Appendix 2: Proof of Lemma \ref{lem:ordering}}
\textbf{Lemma \ref{lem:ordering}}
Let ${\cal M}$ be a simple acyclic oriented matroid of rank $4$ on a ground set $E$, $R_f({\cal M})$ a FPA subgroup of $R({\cal M})$,
and $G \subseteq R({\cal M})$ a cyclic subgroup of $R({\cal M})$ of order $q>2$.
Consider rank $3$ $G$-orbits $O_1,\dots,O_m$ and set $F_1:={\rm span}_{\cal M}(O_1),\dots,F_m:={\rm span}_{\cal M}(O_m)$.
We assume that there is no duplication in this list by removing some of them if necessary.
Suppose $m \geq 2$.
Then, there exist exactly two permutations $p$ on $[m]$ that satisfy the following condition:
for each $i=1,\dots,m$, there exists a covector $X_i$ of ${\cal M}$ such that
\begin{align*}
\begin{split}
&\text{$X_i(e)=0$ for all $e \in F_{p(i)}$,} \\
&\text{$X_i(f)=-$ for all $f \in F_{p(1)} \cup \dots \cup F_{p(i-1)}$,} \\
&\text{$X_i(g)=+$ for all $g \in F_{p(i+1)} \cup \dots \cup F_{p(m)}$.} \\
\end{split}
\end{align*}
\begin{proof}
\begin{quote}
{\bf Claim 1.}
Let $S, T \subseteq E$ be rank 3 $G$-orbits with ${\rm span}_{\cal M}(S) \neq {\rm span}_{\cal M}(T)$.
For any cocircuit $X$ of ${\cal M}$ such that 
$X(s)=0$ for all $s \in {\rm span}_{\cal M}(S)$, it holds that
\begin{align*}
 \text{$X(t)=+$ for all $t \in T$ or $X(t)=-$ for all $t \in T$.}
\end{align*}
\end{quote}
{\scshape Proof of the claim:}
First, assume that there exist $t_1,t_2 \in T$ with $X(t_1)=+,X(t_2)=-$.
Let $\sigma \in G$ be a rotational symmetry such that $\sigma (t_1)= t_2$.
By applying vector elimination to $X,\sigma (X)$ and $t_2$, we obtain a covector $Y$ such that
$Y(t)=0$ for all $t \in S \cup \{ t_2 \}$. Note that if $Y=0$, $\sigma (X) = -X$ must hold.
If this holds, the symmetry $\sigma|_{E \setminus S}$ is a reflection symmetry of ${\cal M}/S$ (see Section \ref{sec:useful}).
Since $\sigma$ and $\sigma|_{S}$ are rotational symmetries of ${\cal M}$ and ${\cal M}|_{S}$ respectively (recall Corollary \ref{cor:rank3_restriction}), it is impossible. 
Hence, we have $Y \neq 0$, which contradicts to the fact $\rank(S \cup \{ t_2 \}) > \rank(S)=3$.
Therefore, it holds that $X(t)=+$ for all $t \in T$ or that $X(t)=-$ for all $t \in T$.

Next, let us assume that there exists $t^* \in T$ such that $X(t^*)=0$.
If $X(t)=0$ for all $t \in T$, it contradicts to the assumption that ${\rm span}_{\cal M}(S) \neq {\rm span}_{\cal M}(T)$.
A contradiction also occurs if there exists $t^{**} \in T$ such that $X(t^{**}) \neq 0$.
Indeed, take $\sigma \in G$ such that $\sigma (t^*) = t^{**}$ and consider the covector $Y:=-\sigma(X) \circ X$.
Then, there exist $t_1,t_2 \in T$ with  $Y(t_1) = +, Y(t_2)=-$.
This is a contradiction (recall  the first part of the proof).
 \quad $\Box$\medskip
\\

\begin{quote}
{\bf Claim 2.}
Let $S_1,\dots,S_m, T \subseteq E$ be $G$-orbits.
Suppose that $\rank(T)=3$ and that
there are covectors $X$ and $Y$ of ${\cal M}$ such that
\begin{align*}
\begin{split}
 &\text{$X(s_i)=\sigma_i$ for all $s_i \in S_i$, for $i=1,\dots,m$,}\\
 &\text{$X(t)=-$ for all $t \in T$}
 \end{split} 
\end{align*}
and
\begin{align*} 
\begin{split}
&\text{$Y(s_i)=\sigma'_i$ for all $s_i \in S_i$ for $i=1,\dots,m$,} \\
&\text{$Y(t)=+$ for all $t \in T$,}
\end{split}
\end{align*}
where $\sigma_i, \sigma'_i \in \{ +,-,0\}$ are such that
$\sigma_i \cdot \sigma'_i \geq 0$, for $i=1,\dots,m$.
For any $\sigma,\sigma' \in \{ +,-,0\}$,
let $\sigma \circ \sigma' := \sigma$ if $\sigma \neq 0$, $\sigma'$ otherwise.
Then, there is a covector $W^*$ of ${\cal M}$ such that
\begin{align*}  
\begin{split}
&\text{$W^*(s_i)=\sigma_i \circ \sigma'_i$ for all $s_i \in S_i$, for $i=1,\dots,m$,} \\
&\text{$W^*(t)=0$ for all $t \in T$.}
\end{split}
\end{align*}
\end{quote}
{\scshape Proof of the claim:}
First of all, note that for any $W \in {\cal V}^*$ we have $W|_T=0$ if $|(W|_T)^0| \geq 3$
(recall that ${\cal M}|_T \simeq A_{3,|T|}$).

Apply conformal elimination to $X,Y$ and $T$, and obtain a covector $W_1$ such that
\begin{align*}  
\begin{split}
&\text{$W_1(s_i)=\sigma_i \circ \sigma'_i$ for all $s_i \in S_i$, for $i=1,\dots,m$,} \\
&\text{$W_1(t) \leq 0$  for all $t \in T$ and $W_1(t_1)=0$ for some $t_1 \in T$}
\end{split}
\end{align*}
Similarly, we obtain a covector $W_2$ such that
\begin{align*} 
\begin{split}
&\text{$W_2(s_i)=\sigma_i \circ \sigma'_i$ for all $s_i \in S_i$, for $i=1,\dots,m$,} \\
&\text{$W_2(t)  \geq 0$ for all $t \in T$ and $W_2(t_2)=0$ for some $t_2 \in T$.}
\end{split}
\end{align*}
If $|(W_1|_T)^0| \geq 3$ (resp.\  $|(W_2|_T)^0| \geq 3$), then $W^*:=W_1$ (resp.\ $W_2$) is a required covector.
In the following, we assume $|(W_1|_T)^0| < 3$ and  $|(W_2|_T)^0| < 3$.

If  $|(W_1|_T)^0| = 1$, take $\tau \in G$ such that $\tau (t_2) = t_1$, and 
apply conformal elimination to $W_1,\tau (W_2)$ and $W_1^- \cap (\tau (W_2))^+$. Then we obtain a new covector $W_3$ such that
\begin{align*}
\begin{split}
&\text{$W_3(s_i)=\sigma_i \circ \sigma'_i$ for all $s_i \in S_i$, for $i=1,\dots,m$,} \\
&\text{$W_3(t_1)=0$, $W_3(t_3)=0$ for some $t_3 \in T \setminus \{ t_1\}$ and $W_3(t)  \leq 0$ for all $t \in T \setminus \{ t_1,t_3\}$.}
\end{split}
\end{align*}
If $|(W_3|_T)^0| \geq 3$, we have $W_3|_T=0$ and thus $W^*:=W_3$ is a required covector.
Otherwise, it holds that $(W_3|_T)^0=\{ t_1,t_3\}$ and that $(W_3|_T)^-=T \setminus \{ t_1,t_3\}$.

If $|(W_1|_T)^0| = 2$, let $W_3:=W_1$ and then the covector $W_3$ fulfills the same condition (under the convention that $\{ t_1, t_3 \} := (W_3)^0$).

Similarly, without loss of generality, we can assume the existence of a covector $W_4$ such that
$(W_4|_T)^0=\{ t_1, t_4\}$ and $(W_4|_T)^+=T \setminus \{ t_1,t_4\}$ for some $t_4 \in T$.
Since ${\cal M}|_T \simeq A_{3,|T|}$ and since $\{ t_1, t_3\}$ and $\{ t_1, t_4\}$ are its facets,
there exists $\pi \in G$ such that $\pi(t_1) = t_4$ and $\pi (t_3) =t_1$.
Take $t_5 \in T \setminus \{ t_1,t_4\}$.
Applying vector elimination to $\pi(W_3),W_4$ and $t_5$, 
we obtain a covector, which satisfies the required condition of $W^*$.
\quad $\Box$\medskip

\begin{quote}
{\bf Claim 3.}
Let $S,T \subseteq E$ be rank $3$ $G$-orbits with $\rank(S \cup T)=4$.
Suppose that there exists a covector $X$ such that 
$X(s)=0$ for all $s \in S$ and
$X(t)= \sigma$ for all $t \in T$, where $\sigma \in \{ +,- \}$.
Then, it holds that
$X(t') = \sigma$ for all $t' \in {\rm span}_{\cal M}(T)$.
 
\end{quote}
{\scshape Proof of the claim:}
Suppose that there exists $t_0 \in {\rm span}_{\cal M}(T)$ such that $X(t_0)= -\sigma$.
Let $T'$ be the $G$-orbit of $t_0$.
Note that the covector $X$ satisfies
\begin{align*} 
\begin{split}
&\text{$X(s)= 0$ for all $s \in S$,} \\
&\text{$X(t)= \sigma$ for all $t \in T$,} \\
&\text{$X(t')= -\sigma$ for all $t' \in T'$}.
\end{split}
\end{align*}
Also, note that there is a covector $Y$ such that
\begin{align*} 
\begin{split}
&\text{$Y(s)= \sigma$ for all $s \in S$,} \\
&\text{$Y(t)= 0$ for all $t \in T$,} \\
&\text{$Y(t')= 0$ for all $t' \in T'$}.
\end{split}
\end{align*}
Since ${\cal M}$ is acyclic, there is the covector $Z$ such that $Z(e)=-\sigma$ for all $e \in E$.
By applying Claim 2 to $Y$ and $Z$, we obtain a covector $W$ such that
\begin{align*} 
\begin{split}
&\text{$W(s)= 0$ for all $s \in S$,} \\
&\text{$W(t)= -\sigma$ for all $t \in T$,} \\
&\text{$W(t')= -\sigma$ for all $t' \in T'$}.
\end{split}
\end{align*}
Applying Claim 2 to $X$ and $W$, we obtain a non-zero covector $W^*$ such that
$W^*(e) = 0$ for all $e \in S \cup T$.
This contradicts to the assumption $\rank(S \cup T)=4$.
\quad $\Box$\medskip
\\ 
\\
Using Claim 3, we obtain modified versions of Claims 1 and 2 as follows. 
\begin{quote}
{\bf Claim 1'.}
Let $S, T \subseteq E$ be rank 3 $G$-orbits with ${\rm span}_{\cal M}(S) \neq {\rm span}_{\cal M}(T)$.
For any cocircuit $X$ of ${\cal M}$ such that 
$X(s)=0$ for all $s \in {\rm span}_{\cal M}(S)$, it holds that
\begin{align*}
 \text{$X(t)=+$ for all $t \in {\rm span}_{\cal M}(T)$ or $X(t)=-$ for all $t \in {\rm span}_{\cal M}(T)$.}
\end{align*}
\end{quote}
\begin{quote}
{\bf Claim 2'.}
Let $F_1,\dots,F_m, T \subseteq E$ be the flats spanned by $G$-orbits.
Suppose that $\rank(T)=3$ and
there are covectors $X$ and $Y$ of ${\cal M}$ such that
\begin{align*}
\begin{split}
 &\text{$X(s_i)=\sigma_i$ for all $s_i \in F_i$, for $i=1,\dots,m$,}\\
 &\text{$X(t)=-$ for all $t \in T$}
 \end{split} 
\end{align*}
and
\begin{align*} 
\begin{split}
&\text{$Y(s_i)=\sigma'_i$ for all $s_i \in F_i$ for $i=1,\dots,m$,} \\
&\text{$Y(t)=+$ for all $t \in T$,}
\end{split}
\end{align*}
where $\sigma_i, \sigma'_i \in \{ +,-,0\}$ are such that
$\sigma_i \cdot \sigma'_i  \geq 0$, for $i=1,\dots,m$.
For any $\sigma,\sigma' \in \{ +,-,0\}$,
let $\sigma \circ \sigma' := \sigma$ if $\sigma \neq 0$, $\sigma'$ otherwise.
Then, there is a covector $W^*$ of ${\cal M}$ such that
\begin{align*}  
\begin{split}
&\text{$W^*(s_i)=\sigma_i \circ \sigma'_i$ for all $s_i \in F_i$, for $i=1,\dots,m$,} \\
&\text{$W^*(t)=0$ for all $t \in T$.}
\end{split}
\end{align*}
\end{quote}

Now we prove Lemma \ref{lem:ordering}.
Since the lemma trivially holds for $m=2$, we assume $m \geq 3$.
Let us first prove that there are at most two permutations that satifies the condition.
If there are three permutations satisfying the condition,
there exists $i,j,k \in [m]$ and covectors $X$ and $Y$
such that
\begin{align*} 
\begin{split}
&\text{$X(s)=Y(s)= 0$ for all $s \in F_i$,} \\
&\text{$X(t)=Y(t) \neq 0$ for all $t \in F_j$,} \\
&\text{$X(u)= -Y(u) \neq 0$ for all $u \in F_k$}.
\end{split}
\end{align*}
Applying Claim 2', we obtain a non-zero covector $Z$ with $\rank(Z^0) =4$, which is a contradiction.

Next, we prove that there are at least two permutations that satifies the condition.
Take the flat $F$ of a rank 3 $G$-orbit.
Applying Claim 2' repeatedly to $F$ and the positive covector (resp.\ negative covector) and taking the negative of the resulting covector if necessary,
we obtain covectors $X_0$ and $Y_0$ such that
for some $i,j$ $(i \neq j) \in [m]$, 
\begin{align*} 
\begin{split}
&\text{$X_0(s)= 0$ for all $s \in F_i$,} \\
&\text{$X_0(t)=+$ for all $t \in \bigcup_{k \in [m] \setminus \{ i\}}{F_k}$,} 
\end{split}
\end{align*}
\begin{align*} 
\begin{split}
&\text{$Y_0(s)= 0$ for all $s \in F_j$,} \\
&\text{$Y_0(t)=+$ for all $t \in \bigcup_{k \in [m] \setminus \{ j\}}{F_k}$,} 
\end{split}
\end{align*}
Apply Claim 2' to $X_0$ and the negative covector with respect to $F_1$ and then obtain a covector $X_1$ 
such that
\begin{align*}
\begin{split}
&\text{$X_0^{(1)}(e)=0$ for all $e \in F_{i_0}$,} \\
&\text{$X_0^{(1)}(f)=-$ for all $f \in \bigcup_{k \in N_0^{(1)}}{F_k}$,} \\
&\text{$X_0^{(1)}(g)=+$ for all $g \in \bigcup_{k \in P_0^{(1)}}{F_k}$,} \\
\end{split}
\end{align*}
where $i_0 \in [m]$ and $P_0^{(1)},N_0^{(1)} \subseteq [m]$ are such that $P_0^{(1)} \cup N_0^{(1)} = [m] \setminus \{ i_0 \}$ 
and $P_0^{(1)} \cap N_0^{(1)} = \emptyset$. 
We continue applying Claim 2' with respect to obtained covectors and $X_0$ until we obtain a covector $X_1$ such that
\begin{align*}
\begin{split}
&\text{$X_1(e)=0$ for all $e \in F_{i_1}$,} \\
&\text{$X_1(f)=-$ for all $f \in \bigcup_{k \in N_1}{F_k}$,} \\
&\text{$X_1(g)=+$ for all $g \in \bigcup_{k \in P_1}{F_k}$,} \\
\end{split}
\end{align*}
where $i_1 \in [m]$ and $P_1,N_1 \subseteq [m]$ are such that $|N_1|=1$, $P_1 \cup N_1 = [m] \setminus \{ i_1 \}$ and $P_1 \cap N_1 = \emptyset$.
We continue this procedure and obtain a permutation $p_1$ on $[m]$: $p_1(1)=i_0,p_1(2)=i_1,\dots$.
Similarly, a permutation $p_2(\neq p_1)$ is obtained by applying the above procedure to $Y_0$.
The permutations $p_1$ and $p_2$ satisfy the required condition.
\end{proof}

\end{document}